\newtheorem{theorem}{Theorem}
\newtheorem{lemma}[theorem]{Lemma}
\newtheorem{prop}[theorem]{Proposition}
\theoremstyle{remark}
\newcommand{\R}{\mathbb R}
\newcommand{\C}{\mathbb C}
\newcommand{\N}{\mathbb N}
\newcommand{\Z}{\mathbb Z}
\newcommand{\Q}{\mathbb Q}
\newcommand{\F}{\mathbb F}
\newcommand{\p}{\mathfrak p}
\newcommand{\Aa}{\mathfrak a}
\newcommand{\Bb}{\mathfrak b}
\newcommand{\OO}{\mathcal O}
\newcommand{\HH}{\mathbb H}
\newcommand{\Ss}{\mathcal S}
\newcommand{\Tt}{\mathcal T}
\newcommand{\Mm}{\mathcal M}
\newcommand{\Pp}{\mathcal P}
\newcommand{\tr}{\text{tr}}
\newcommand{\ord}{\text{ord}}
\newcommand{\Sym}{\text{Sym}}
\newcommand{\sym}{\text{sym}}
\newcommand{\Hom}{\text{Hom}}
\newcommand{\Res}{\text{Res}}
\begin{document}

\title{Mass Equidistribution for Automorphic Forms of Cohomological Type on $GL_2$}

\author{Simon Marshall}

\maketitle

\begin{abstract}
We extend Holowinsky and Soundararajan's proof of quantum unique ergodicity for holomorphic Hecke modular forms on $SL(2,\Z)$, by establishing it for automorphic forms of cohomological type on $GL_2$ over an arbitrary number field which satisfy the Ramanujan bounds.  In particular, we have uncondtional theorems over totally real and imaginary quadratic fields.  In the totally real case we show that our result implies the equidistribution of the zero divisors of holomorphic Hecke modular forms, generalising a result of Rudnick over $\Q$.
\end{abstract}

\section{Introduction}
\label{weightintro}

One of the central problems in the subject of quantum chaos is to understand the behaviour of high energy Laplace eigenfunctions on a Riemannian manifold $M$.  There is an important conjecture of Rudnick and Sarnak \cite{RS} which predicts one aspect of this behaviour in the case when $M$ is compact and negatively curved, namely that the microlocal lifts of eigenfunctions tend weakly to Liouville measure on the unit tangent bundle.  This is known as the quantum unique ergodicity conjecture, and has as a corollary that the $L^2$ mass of eigenfunctions becomes weakly equidistributed on $M$.  We refer the reader to \cite{Li1, Li2, RS, Sn, SV, Ze1, Ze2} for many illuminating discussions and interesting results related to this conjecture.

In this paper we shall deal with a variant of Rudnick and Sarnak's conjecture which replaces Laplace eigenfunctions with certain modular forms.  This may be described most easily in the case of the modular surface $X = SL(2,\Z) \backslash \HH^2$, where the objects we shall consider are holomorphic modular forms of large weight, or equivalently sections of high tensor powers of the line bundle of holomorphic differentials on $X$.  If $f$ is a holomorphic modular cusp form of weight $k$, the analogue of the $L^2$ mass of $f$ is the Petersson measure

\begin{equation*}
\mu_f = y^k |f(z)|^2 dv,
\end{equation*}

where $dv$ denotes the hyperbolic volume.  The measure $\mu_f$ is invariant under $SL(2,\Z)$, and we may suppose that $f$ has been normalised so that it descends to a probability measure on $X$.  The analogue of the quantum unique ergodicity conjecture for holomorphic forms is then to show that the measures $\mu_f$ tend weakly to the hyperbolic volume as the weight of $f$ tends to infinity.  This is very much in the spirit of the original conjectures, with the Cauchy-Riemann equations replacing the Laplace operator and the weight $k$ playing the role of the eigenvalue, and was considered in \cite{LS, Sr1}.

There are two main differences between this conjecture and the classical form of QUE.  The first is that no microlocal lift is known for holomorphic forms, so we are restricted to considering equidistribution on $X$ rather than its unit tangent bundle, and ergodic methods may not presently be applied to this problem.  The second is that the literal analogue of the conjecture fails because the space of cusp forms is large, and contains elements like $\Delta^k$ (where $\Delta$ is Ramanujan's cusp form) whose mass is not equidistributing.  From a number theoretic point of view it is natural to deal with this multiplicity issue by requiring $f$ to be a Hecke eigenform, which gives a refinement of the conjecture known as arithmetic QUE.  This is a natural condition to impose, as Watson's triple product formula \cite{W} illustrates that the generalised Riemann hypothesis would imply QUE for holomorphic Hecke eigenforms with the optimal rate of equidistribution.  The first unconditional results on this conjecture were obtained by Sarnak \cite{Sr1}, who showed that it was true for dihedral forms, and Luo and Sarnak \cite{LS}, who showed that it was true for almost all eigenforms of weight at most $k$.

In \cite{Ho, HS, So2}, Holowinsky and Soundararajan established QUE for all holomorphic Hecke eigenforms on the modular surface $X$, or more generally any noncompact congruence hyperbolic surface.  Their proof is a combination of two different approaches, one based on bounding the $L$ value appearing in Watson's triple product formula and the other on bounding shifted convolution sums, and which complement each other in a remarkable way to produce the full result.  In this paper we extend Holowinsky and Soundararajan's methods to prove QUE for holomorphic Hecke eigenforms on $GL_2$ over a totally real number field, or more generally for automorphic forms of cohomological type on $GL_2$ over an arbitrary number field and which satisfy the Ramanujan bounds.  For simplicity, we assume our fields to have narrow class number one throughout the paper, but this is not essential.

We shall give a simple outline of our results here, before describing them more fully once we have introduced the required notation.  First let us assume that the field $F$ over which we are working is totally real with narrow class number one.  Let $\OO$ be the ring of integers of $F$, and let $\Gamma = GL^+(2,\OO)$ be the subgroup of $GL(2,\OO)$ of elements with totally positive determinant.  Fix $\nu > 0$, and let $\{ f_n \}$ be a sequence of holomorphic Hecke modular forms for $\Gamma$ whose weights $k_n = (k_{i,n})$ satisfy $k_{i,n} \ge k_{j,n}^\nu$ for all $i$ and $j$.  Our result is:

\begin{theorem}
\label{hilbertQUE}
The normalised Petersson probability measures $\mu_n = y^{k_n} |f_n(z)|^2 dv$ tend weakly to the uniform measure on $\Gamma \backslash (\HH^2)^n$ as $k \rightarrow \infty$.
\end{theorem}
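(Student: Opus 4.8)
The plan is to follow the two–pronged strategy of Holowinsky and Soundararajan, adapted to the product $\Gamma\backslash(\HH^2)^d$ with $d=[F:\Q]$. First I would reduce Theorem~\ref{hilbertQUE} to the estimation of two kinds of periods. Using the spectral decomposition of $L^2(\Gamma\backslash(\HH^2)^d)$ into the constants, Hecke--Maass cusp forms, and incomplete Eisenstein series, and a density argument, it suffices to show that $\int\phi\,d\mu_n$ converges to the average of $\phi$ when $\phi$ is either an incomplete Eisenstein series or a fixed Hecke--Maass cusp form; since each $\mu_n$ is a probability measure the constant function is automatic. For an incomplete Eisenstein series I would unfold against the Fourier expansion of $f_n$, producing the Rankin--Selberg integral attached to $y^{k_n}|f_n|^2$, which factors as $\zeta_F(s)L(s,\sym^2 f_n)$ up to elementary factors. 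Moving the contour past $\Re s=1$ and invoking the standard convexity bound $L(1+it,\sym^2 f_n)\ll_\epsilon (k_n(1+|t|))^\epsilon$ together with the lower bound $L(1,\sym^2 f_n)\gg(\log k_n)^{-A}$ (the number field analogue of Hoffstein--Lockhart / Goldfeld--Hoffstein--Lieman, which also supplies the normalisation of the total mass), the Eisenstein contribution produces the correct main term. This is essentially the Luo--Sarnak argument and presents no difficulty beyond bookkeeping over $F$.

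The heart of the matter is the cusp form period $\int\phi\,d\mu_n$ for a fixed Hecke--Maass cusp form $\phi$, which must tend to zero, and here I would prove two complementary bounds. The first, following Holowinsky, expands the period via the Fourier expansions of $f_n$ and $\phi$ into a shifted convolution sum of the shape $\sum_{0\ll l}\lambda_\phi(l)\sum_{m}\lambda_{f_n}(m)\overline{\lambda_{f_n}(m+l)}\,W(\dots)$ over totally positive elements of $\OO$ (using narrow class number one to index by elements rather than by ideals), and bounds it by a multidimensional Selberg sieve over $\OO$. The output is a bound of the form $\int\phi\,d\mu_n\ll (\log k_n)^{-\delta}\prod_{N\p\le X}\bigl(1+O(|\lambda_{f_n}(\p)|/N\p)\bigr)$ for a sieve length $X$ polynomial in $k_n$; this is small \emph{unless} the Hecke eigenvalues $\lambda_{f_n}(\p)$ are abnormally large on average over primes of norm at most $X$.

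The second bound, following Soundararajan, uses the triple product formula of Watson and Ichino over $F$ to write $|\int\phi\,d\mu_n|^2$ as $L(1/2,\sym^2 f_n\times\phi)\,L(1/2,\phi)$ divided by $L(1,\sym^2 f_n)^2\,L(1,\Ad\,\phi)$, up to bounded archimedean and ramified factors. Bounding the fixed factor $L(1/2,\phi)$ trivially and estimating $L(1/2,\sym^2 f_n\times\phi)$ by Soundararajan's weak subconvexity theorem (valid for $\mathrm{GL}_n$ $L$-functions over any number field, given the Ramanujan bound), one saves a factor $(\log C_n)^{1-\epsilon}$ over convexity, where $C_n\asymp\prod_i k_{i,n}^{3}$ is the analytic conductor. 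Combined with the normalising factor $L(1,\sym^2 f_n)$, this shows $\int\phi\,d\mu_n$ is small \emph{unless} $L(1,\sym^2 f_n)$ is abnormally small. Finally I would combine the two: by the Hecke relations and the Ramanujan bound, $\log L(1,\sym^2 f_n)=\sum_{N\p\le X}(|\lambda_{f_n}(\p)|^2-1)/N\p+O(1)$, so the Holowinsky bound can fail only when $L(1,\sym^2 f_n)$ is large, which is exactly when the Soundararajan bound succeeds, and conversely; balancing the two estimates gives $\int\phi\,d\mu_n\to0$. The hypothesis $k_{i,n}\ge k_{j,n}^{\nu}$ enters here, keeping all the weights on a common logarithmic scale so that the sieve length $X$ and the conductor $C_n$ are commensurable and the two regimes genuinely overlap.

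The main obstacle I anticipate is not the complementarity argument, which is structurally identical to the case of $\Q$, but rather establishing the two analytic inputs over $F$ with explicit uniformity in the multi-weight $(k_{i,n})$: the shifted convolution and sieve bound over $\OO$, where the infinitude of units and the several archimedean places complicate both the combinatorics of the sieve and the choice of the archimedean cutoff $W$, and the weak subconvexity estimate over $F$ with the correct dependence on the archimedean conductor $\prod_i k_{i,n}^{3}$. It is precisely the interaction of this vector-valued weight with the sieve length that necessitates the balance hypothesis on the $k_{i,n}$.
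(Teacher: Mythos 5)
Your overall strategy is the right one: the two complementary bounds (sieved shifted convolution sums \`a la Holowinsky; Ichino--Watson triple product plus weak subconvexity \`a la Soundararajan) and the dichotomy on $L(1,\sym^2 f_n)$ are precisely the skeleton of the paper's proof of Theorem~\ref{hilbertQUE} (via Theorems~\ref{Home}, \ref{Some} and the combination in section~\ref{mixedconclusion}). The explanation of why the uniform-weight hypothesis enters is also essentially right: one needs the sieving region in $\OO$ to be well-rounded, not a long thin box.

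There is, however, a genuine gap in your treatment of the incomplete Eisenstein series period. You propose to unfold, shift the contour past $\Re s = 1$, and dispatch the error term using ``the standard convexity bound $L(1+it,\sym^2 f_n)\ll_\epsilon(k_n(1+|t|))^\epsilon$'' together with the Hoffstein--Lockhart lower bound $L(1,\sym^2 f_n)\gg(\log k_n)^{-1}$, calling this ``bookkeeping.'' That does not close. The pole at $s=1$ yields the main term, but the error comes from the remaining integral at (effectively) $\Re s=1/2$, where the normalisation of $f_n$ contributes a factor $1/(\sqrt{Nk}\,L(1,\sym^2 f_n))$ against $L(1/2+it,\sym^2\pi\otimes\lambda_{-m})$. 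Convexity gives $L(1/2+it,\sym^2 \pi\otimes\lambda_{-m}) \ll Nk^{1/2+\epsilon}$, so the error is $\ll Nk^\epsilon/L(1,\sym^2\pi)$, which with Hoffstein--Lockhart is $\ll Nk^\epsilon(\log\|k\|)$: it does not tend to zero. (One cannot retreat to $\sigma=1-\delta$ either, because the regularising length $T$ is only polylogarithmic in $k$, being forced to equal $M_k(\pi)^{-1/n}$ for the sieve side to be useful, so the contour gain $T^{-n\delta}$ is swamped.) In the paper this error term is the quantity $R_k(f)$ in Theorem~\ref{Home}, which is bounded by applying weak subconvexity to $L(1/2+it,\sym^2\pi\otimes\lambda_{-m})$ (Lemma~\ref{Rkbound}); moreover the conclusion still runs the Holowinsky/Soundararajan dichotomy for the Eisenstein case as well, invoking~\eqref{Some2} when $L(1,\sym^2\pi)$ is large and Theorem~\ref{Home} plus Lemma~\ref{Mkbound} when it is small. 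So the Eisenstein period requires both prongs too, not just the Rankin--Selberg unfolding.

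A smaller point: you flag the unit group as an obstacle for the sieve, which is correct, but the specific device the paper uses is worth knowing, since it is the key technical innovation. One should \emph{not} unfold over $\Gamma_\infty$ (which would force a unit-invariant test function and hence a shifted convolution sum over a long thin region in $\OO$), but over the unipotent $\Gamma_U$ against a product test function $g(Ty)$ supported on a box in $\R_+^n$, introducing the ``unipotent Eisenstein series'' $E(g|z)$ of section~\ref{weightprelims2}. This breaks the unit symmetry deliberately, so that the truncation of the resulting sum $\sum_\eta|\lambda_f(\eta)\lambda_f(\eta+\xi)|$ is at each archimedean place separately, producing a ball $\{0<\eta<Tk\}$ in $\OO$ to which the lattice large sieve applies cleanly. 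Without this step the sieve does not produce the Euler product you quote.
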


As a consequence of theorem \ref{hilbertQUE}, we prove that if $k$ is a fixed positive weight and $\{ f_N \}$ a sequence of holomorphic Hecke forms of weight $Nk$, then the zero divisors $Z_N$ of $f_N$ become equidistributed on $\Gamma \backslash (\HH^2)^n$, either as Lelong $(1,1)$ currents or as measures defined by integration over $Z_N$ with respect to the volume form of the induced Riemannian metric.  This generalises a result of Rudnick \cite{Ru} on the equidistribution of zeros of Hecke modular forms on $SL(2,\Z)$.

The statement of the mixed case of our result is a little more involved, and for now we will give it only in the case of a Bianchi manifold $Y = \Gamma \backslash \HH^3$, where $\OO$ is the ring of integers in an imaginary quadratic field $F$ and $\Gamma = SL(2,\OO)$.  Let $E_d$ be the representation $\Sym^d \otimes \overline{ \Sym}^d$ of $SL(2,\C)$, and let $V_d$ be the associated local system on $Y$ which we equip with a certain canonical positive definite norm.  The objects whose equidistribution we shall now consider may be thought of either as 1-forms in $A^1(Y, V_{d})$ which are harmonic with respect to the norm on $V_d$ and are eigenforms of the Hecke operators, or as the lowest $K$-types in the corresponding automorphic representations of cohomological type on $\Gamma \backslash SL(2,\C)$.

We may define analogues of the Petersson mass using either of these viewpoints.  A harmonic Hecke form $\omega \in A^1(Y, V_{d})$ is a section of $T^*Y \otimes V_{d}$ to which we may associate the measure $\mu_\omega = \| \omega \|^2 dv$, where $\| \cdot \|$ is the tensor product of the norms on $T^*Y$ and $V_d$ and $dv$ is the hyperbolic volume.  Alternatively, if $\phi \in \pi$ is a vector of lowest $K$-type we may push the measure $| \phi |^2 dg$ from $\Gamma \backslash SL(2,\C)$ down to $Y$ to obtain one differing from $\mu_\omega$ by a constant multiple.  With this notation, we may state our result:

\begin{theorem}
The measures $\mu_\omega$ tend weaky to the hyperbolic measure on $Y$ as $d \rightarrow \infty$.
\end{theorem}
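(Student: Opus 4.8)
The plan is to follow the strategy of Holowinsky and Soundararajan, transposed to the group $G=SL(2,\C)$ and the imaginary quadratic field $F$. First I would normalise $\omega$ so that $\mu_\omega$ is a probability measure; equivalently, fix a lowest $K$-type vector $\phi\in\pi$ with $\int_{\Gamma\backslash G}|\phi|^2\,dg=1$, so that $\mu_\omega$ is the pushforward of $|\phi|^2\,dg$ to $Y$. Since the incomplete Eisenstein series on $Y$ and the Hecke eigenforms on $Y$ span a dense subspace of $C(Y)$ modulo constants, it suffices to show that $\langle\mu_\omega,E(\,\cdot\mid\psi)\rangle$ converges to $\mathrm{vol}(Y)^{-1}\int_Y E(\,\cdot\mid\psi)\,dv$ for each test function $\psi$, and that $\langle\mu_\omega,g\rangle\to 0$ for each fixed Hecke eigenform $g$ on $Y$. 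The Eisenstein contribution I would treat as in \cite{LS,So2}: after unfolding it is governed by $L(s,\pi\times\bar\pi)=\zeta_F(s)L(s,\mathrm{sym}^2\pi)$, whose only pole in $\Real s\ge 1$ is the simple pole of $\zeta_F$ at $s=1$ — this supplies the hyperbolic measure — and the remaining terms are controlled by the standard lower bound $L(1,\mathrm{sym}^2\pi)\gg_\varepsilon d^{-\varepsilon}$ together with the growth in $d$ of the archimedean conductor of $\pi_\infty$.

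For the cuspidal part, fix a Hecke eigenform $g$ on $Y$. The triple product formula over $F$ (Ichino's formula, or the Rankin--Selberg unfolding of $\|\omega\|^2 g$ against an Eisenstein series) gives an identity of the shape
\begin{equation*}
\left|\int_Y\|\omega\|^2\,g\,dv\right|^2 \;=\; c\,\frac{I_\infty(d)\,L(1/2,\mathrm{sym}^2\pi\times g)\,L(1/2,g)}{L(1,\mathrm{sym}^2\pi)^2\,L(1,\mathrm{sym}^2 g)},
\end{equation*}
where $c>0$ is a constant independent of $d$ and $I_\infty(d)$ is the archimedean local integral attached to the cohomological representation $\pi_\infty$ of $SL(2,\C)$. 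Here $L(s,\mathrm{sym}^2\pi\times g)$ is a degree $6$ Euler product over $F$ whose analytic conductor is $\asymp_g d^{A}$ for a fixed constant $A$. The first task is to evaluate $I_\infty(d)$: identifying the vector-valued form $\omega\in A^1(Y,V_d)$ with the lowest $K$-type of $\pi$ and computing the resulting triple integral of matrix coefficients on $SL(2,\C)$ should give $I_\infty(d)\asymp_\varepsilon d^{-\beta+\varepsilon}$ for an explicit $\beta>0$, exactly enough to absorb the conductor growth coming from $\pi_\infty$.

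I would then estimate $\langle\mu_\omega,g\rangle$ in two complementary ways. First, the Rankin--Selberg coefficients of $L(s,\mathrm{sym}^2\pi\times g)$ are bounded on average, so the weak subconvexity bound of Soundararajan, in its version over the number field $F$, gives $L(1/2,\mathrm{sym}^2\pi\times g)\ll_{F,g}(\text{conductor})^{1/4}(\log d)^{-\delta}$ for some $\delta>0$; combined with the evaluation of $I_\infty(d)$ and the normalisation of $\mu_\omega$, this yields $\langle\mu_\omega,g\rangle\ll_g (\log d)^{-\delta}/L(1,\mathrm{sym}^2\pi)$, which is $o(1)$ unless $L(1,\mathrm{sym}^2\pi)$ is abnormally small. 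Secondly, expanding $\|\omega\|^2$ in the Hecke eigenvalues $\lambda_\pi(\mathfrak n)$, $\mathfrak n\subset\OO$, and in incomplete Poincar\'e series, and bounding the resulting shifted convolution sums over $\OO$ by a sieve argument using the Ramanujan bound $|\lambda_\pi(\mathfrak n)|\le\tau(\mathfrak n)$, should give a bound of the form $\langle\mu_\omega,g\rangle\ll_{g,\varepsilon}(\log d)^{-1+\varepsilon}L(1,\mathrm{sym}^2\pi)^{1/2}(\log\log d)^{O(1)}$, which is $o(1)$ unless $L(1,\mathrm{sym}^2\pi)$ is abnormally large. Since $d^{-\varepsilon}\ll L(1,\mathrm{sym}^2\pi)\ll(\log d)^{O(1)}$ always holds, in every case one of the two estimates forces $\langle\mu_\omega,g\rangle\to 0$, which together with the Eisenstein case proves the theorem.

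The routine analytic inputs all have to be reworked over $F$: the weak subconvexity estimate requires a number-field version of Soundararajan's argument and the lower bound for $L(1,\mathrm{sym}^2\pi)$, while the sieve bound requires redoing Holowinsky's combinatorics over the lattice $\OO$ and a bound for shifted convolution sums of Hecke eigenvalues in $\OO$, the latter resting essentially on the Ramanujan bound for $\pi$. The genuinely new ingredient, and the step I expect to be the main obstacle, is archimedean: one must make the passage from the harmonic vector-valued form $\omega\in A^1(Y,V_d)$ to the lowest $K$-type of $\pi$ completely explicit, and evaluate the archimedean triple integral $I_\infty(d)$ for the cohomological representations of $SL(2,\C)$ with the right power of $d$ — here $d$ plays the role that the weight $k$ plays in the classical case — so that it matches the conductor growth and can be fed into the number-field weak subconvexity bound. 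The interplay between this archimedean bookkeeping and the two arithmetic estimates is the heart of the argument.
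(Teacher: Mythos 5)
Your overall strategy is the same as the paper's: decompose test functions into Hecke--Maass cusp forms and (incomplete) Eisenstein series, and estimate each inner product by a dichotomy between a weak-subconvexity bound (via Ichino's triple product formula, sensitive when $L(1,\mathrm{sym}^2\pi)$ is small) and a shifted-convolution bound (via a regularised unfolding and sieve over $\OO$, sensitive when $L(1,\mathrm{sym}^2\pi)$ is large). The identification of the archimedean local integrals on $SL(2,\C)$ as the genuinely new ingredient is also correct; the paper computes them in a companion paper and shows they have exactly the decay $\asymp Nk^{-1}$ needed to cancel the conductor growth. Your recognition that the dichotomy is closed by an inequality of the form $M_k(\pi)\ll(\log d)^{O(1)}L(1,\mathrm{sym}^2\pi)^{1/2}$ is in the right spirit, although the precise exponents you quote on the sieve side $\bigl((\log d)^{-1+\epsilon}L^{1/2}\bigr)$ are considerably stronger than what the method yields; the paper proves $M_k(\pi)\ll(\log d)^{1/6}(\log\log d)^{9/2}L(1,\mathrm{sym}^2\pi)^{1/2}$, so that after Theorem \ref{Home} one gets roughly $(\log d)^{1/12+\epsilon}L^{1/4}$, which is positive in the logarithm and only becomes small once $L$ is forced below a fixed power of $\log d$. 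That weaker (but correct) bound still closes the dichotomy.

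The genuine gap is in your Eisenstein treatment. You propose to handle incomplete Eisenstein series by Rankin--Selberg unfolding and a contour shift alone, bounding the shifted contour by weak subconvexity for $L(1/2+it,\mathrm{sym}^2\pi)$ together with a lower bound for $L(1,\mathrm{sym}^2\pi)$. But after the contour shift the error is of size $(\log d)^{-1+\epsilon}/L(1,\mathrm{sym}^2\pi)$, and the best available lower bound is the Hoffstein--Lockhart--Goldfeld bound $L(1,\mathrm{sym}^2\pi)\gg 1/\log d$ (your $\gg_\varepsilon d^{-\varepsilon}$ is strictly weaker and in any case insufficient). Plugging this in gives only $(\log d)^\epsilon$, which is bounded but not $o(1)$. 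So the Eisenstein contribution needs the \emph{same} dichotomy you set up for cusp forms: when $L(1,\mathrm{sym}^2\pi)$ exceeds $(\log d)^{-13/15}$ the contour-shift argument works, and when it is smaller one must fall back on the sieve bound — which is why the paper's Theorem \ref{Home} is stated to cover pure incomplete Eisenstein series as well, with a separate term $R_k(f)$ that is bounded using the Hoffstein--Lockhart lower bound and weak subconvexity for $L(1/2+it,\mathrm{sym}^2\pi\otimes\lambda_{-m})$. Without extending your unfolding and sieve machinery to incomplete Eisenstein series, the proof does not close.
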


\subsection{Structure of the Paper}

We introduce the manifolds and automorphic forms with which we shall work in section \ref{weightprelims}, before giving the full statements of our results in section \ref{weightresults}.  We describe the structure of the proof in section \ref{weightreview}.  As our proof is a direct generalisation of the methods used by Holowinsky and Soundararajan over $\Q$, we do this by first giving an overview of their proof before explaining the modifications which must be made to extend it to a number field.  Sections \ref{sievereal} to \ref{holosieve} contain the generalisation of Holowinsky's method of shifted convolution sums, and section \ref{someproof} contains the extension of Soundararajan's approach of triple product identities and weak subconvexity.  In section \ref{mixedconclusion} we combine these two approaches to establish our main result, and in section \ref{currents} we prove the generalisation of Rudnick's theorem on the equidistribution of zero divisors of holomorphic forms.  Section \ref{appendix} is an appendix which contains various computations which are needed in the course of the proofs.

{\bf Acknowledgements}: We would like to thank our adviser Peter Sarnak for suggesting this problem as part of our thesis, and providing much guidance and encouragement in the course of our work.

\section{Definitions and Notation}
\label{weightprelims}

\subsection{Arithmetic Manifolds}
\label{weightprelims1}

We begin by introducing the manifolds on which we shall work.  Let $F$ be a number field of narrow class number one with degree $n$ and $r$ infinite places, of which $r_1$ are real and $r_2$ are complex.  Let $\F = F \otimes_{\Q} \R$, and $\F^+$ be the subset of totally positive elements.  If $\OO$ is the ring of integers of $F$, let $\OO^+ = \OO \cap \F^+$.  Define $\mu_+$ to be the group of totally positive roots of unity in $F$, which is the ordinary unit group if $F$ is totally complex and trivial otherwise, and set $\omega_+ = |\mu_+|$.  Let $G_i = GL^+(2,\R)$ for $i \le r_1$ and $GL(2,\C)$ otherwise, and $G = G_1 \times \ldots \times G_r = GL^+(2,\F)$.  $Z_i$ will denote the centre of $G_i$, and $\overline{G_i} = G_i/Z_i$.  $N$ will denote the usual unipotent subgroup of $G$ and $\overline{G}$, and $A$ and $M$ the maximal split and compact diagonal subgroups with lower entry equal to 1.  $K = K_1 \times \ldots \times K_r$ will be the maximal compact.   Let $\Gamma = GL^+ (2, \OO)$ be the integral matrices with totally positive determinant, and define $\Gamma_\infty = \Gamma \cap B$ and $\Gamma_U = \Gamma \cap U$.

Let $\HH_F = \overline{G} / K$ be identified with $(\HH^2)^{r_1} \times (\HH^3)^{r_2}$, and introduce on it the following co-ordinates:

\begin{eqnarray*}
z & = & (z_1, \ldots, z_r), \\
z_i & = & x_i + i y_i, \quad x_i, y_i \in \R \text{ for } i \le r_1, \\
z_i & = & x_i + j y_i, \quad x_i \in \C, \; y_i \in \R \text{ for } i > r_1, \\
x & = & (x_1, \ldots, x_r), \quad y = (y_1, \ldots, y_r).
\end{eqnarray*}

We let

\begin{equation*}
dv = \bigwedge_{i \le r_1} y_i^{-2} dx_i dy_i \wedge \bigwedge_{i > r_1} \frac{y_i^{-3}}{2i} dx_i d\overline{x}_i dy_i
\end{equation*}

be the product of standard hyperbolic measures on $\HH_F$.  We define $X = \Gamma \backslash \overline{G}$ and $Y = \Gamma \backslash \HH_F$, so that automorphic forms on $GL_2/F$ of full level are equivalent to Hecke eigenforms on $X$.

Throughout the paper, we will use a multi-index notation for co-ordinates on $\HH_F$ and the weights of automorphic forms; for instance, if $y$ is the co-ordinate on $\HH_F$ introduced above and $k$ is an $r$-tupe of integers, the expression $y^k$ will denote $\prod y_i^{k_i}$.  If $\delta_i$ is defined to be 1 for $i \le r_1$ and 2 otherwise, for any $r$-tuple $x$ we denote $\prod x_i^{\delta_i}$ by $Nx$, and the maximum of $|x_i|$ by $\| x \|$.

\subsection{Eisenstein Series}
\label{weightprelims2}

In addition to the usual complete Eisenstein series, we will work with two kinds of incomplete Eisenstein series which we term `pure incomplete Eisenstein series' and `unipotent Eisenstein series'.  To define them, we must introduce the multiplicative characters of the group $\F_+^\times / \OO_+^\times$ following Hecke.  Let $\epsilon_j = ( \epsilon_j^1, \ldots, \epsilon_j^r )$, $j = 1, \ldots, r-1$ be generators of $\OO^\times_+$, and define $A$ as

\begin{equation*}
A = \left( \begin{array}{cccc} 1/n & \log | \epsilon_1^1 | & \ldots & \log | \epsilon_{r-1}^1 | \\
 \vdots & & & \\
 1/n & \log | \epsilon_1^r | & \ldots & \log | \epsilon_{r-1}^r | 
 \end{array} \right)
\end{equation*}

with inverse

\begin{equation*}
A^{-1} = \left( \begin{array}{cccc} 1 & 1 & \ldots & 2 \\
 e_1^1 & e_2^1 & \ldots & e_n^1 \\
 \vdots & & & \\
 e_1^{n-1} & e_2^{n-1} & \ldots & e_n^{n-1} 
 \end{array} \right).
\end{equation*}

(Here the first row of $A^{-1}$ contains $r_1$ 1's and $r_2$ 2's.)  We may now define the characters $\lambda_m(y)$ for $m \in \Z^{r-1}$ by the following formula:

\begin{eqnarray*}
\lambda_m(y) & = & \prod_{p=1}^n \prod_{q=1}^{n-1} |y_p|^{2 \pi i m_q e_p^q } \\
 & = & \exp \left( \sum_{p=1}^r \beta(m,p) \log |y_p| \right),
\end{eqnarray*}

\begin{equation}
\label{beta}
\text{where } \beta(m,p) = 2 \pi i \sum_{q=1}^{r-1} m_q e_p^q.
\end{equation}

As $\lambda_m$ is invariant under the action of $\OO_+^\times$ on $\F^+$, it may be extended to a Hecke character on $F$ via the isomorphism $\F / \OO^\times \simeq \F^+ / \OO_+^\times$.

Having defined $\lambda_m$, we may let $E(z, s, m)$ denote the usual Eisenstein series associated to the character $Ny^s \lambda_m(y)$ of the cusp of $X$.  The pure incomplete Eisenstein series are formed by automorphising a function on $\Gamma_\infty \backslash \HH_F$ which is invariant under $U$ and transforms according to $\lambda_m$ under the norm one elements of the diagonal.  They are determined by an index $m \in \Z^{r-1}$ and a function $\psi \in C^\infty_0 (\R^+)$, and are defined as

\begin{equation*}
E( \psi, m | z) = \sum_{\gamma \in \Gamma_\infty \backslash \Gamma } \psi( Ny ( \gamma z) ) \lambda_m( y( \gamma z ) ).
\end{equation*}

The unipotent Eisenstein series are formed by automorphising a function on $\HH_F$ which is only invariant under $U$.  They are determined by a function $g \in C^\infty_0 ( \R^r_+)$, and defined as

\begin{equation*}
E( g | z) = \sum_{\gamma \in \Gamma_U \backslash \Gamma } g( y( \gamma z) ).
\end{equation*}

We note that it is less standard to form Eisenstein series by symmetrising a function over $\Gamma_U$ in this way, and while these series do not play a major part in the proof, their appearance is related to the key fact that the correct way in which to generalise Holowinsky's methods is by unfolding over the unipotent, as will be discussed in section \ref{holoextend}.

\subsection{Representation Theory of $SL(2,\C)$}

For $m \in \N$, let $\rho_m$ denote the irreducible $m+1$ dimensional representation of $SU(2) \subset SL(2,\C)$ with Hermitian inner product $\langle \: , \: \rangle$, and let $\cdot^*$ denote the associated conjugate linear isomorphism between $\rho_m$ and $\rho^*_m$.  We choose an orthonormal basis $\{ v_t \}$ ($t = m, m-2, \ldots, -m$) for $\rho_m$ and dual basis $\{ v_t^* \}$ for $\rho^*_m$, consisting of eigenvectors of $M$ satisfying

\begin{equation*}
\left( \begin{array}{cc} e^{i\theta} & 0 \\ 0 & e^{-i\theta} \end{array} \right) v_t = e^{i t\theta} v_t, \quad \left( \begin{array}{cc} e^{i\theta} & 0 \\ 0 & e^{-i\theta} \end{array} \right) v_t^* = e^{-i t\theta} v_t^*.
\end{equation*}

If $r \in \C$ and $k \in \Z$, let  $I_{(k,r)}$ be the representation of $SL(2,\C)$ unitarily induced from the character

\begin{equation*}
\chi: \left( \begin{array}{cc} z & x \\ 0 & z^{-1} \end{array} \right) \mapsto (z / |z| )^k |z|^{2ir}.
\end{equation*}

These are unitarisable for $(k,r)$ in the set

\begin{equation*}
U = \{ (k,r) | r \in \R \} \cup \{ (k,r) | k = 0, r \in i(-1,1) \},
\end{equation*}

and two such representations $I_{(k,r)}$, $I_{(k',r')}$ are equivalent iff $(k,r) = \pm (k',r')$.  Furthermore, these are all the irreducible unitary representations of $SL(2,\C)$ other then the trivial representation.  We choose a set $U' \subset U$ representing every equivalence class in $U$ to be

\begin{equation*}
U' = \{ (k,r) | r \in (0,\infty) \} \cup \{ (k,r) | r = 0, k \ge 0 \} \cup \{ (k,r) | k = 0, r \in i(0,1) \}.
\end{equation*}

Given $\pi \in \widehat{SL(2,\C)}$ nontrivial, we shall say $\pi$ has weight $k$ and spectral parameter $r$ if it is isomorphic to $I_{(k,r)}$ with $(k,r) \in U'$.  As we shall work on $GL_2$ with trivial central character, to describe the Archimedean components of our automorphic representations it will suffice to describe their restrictions to $SL_2$.  At complex places we shall use the parameters just introduced, and at real places we shall use the customary weight and spectral parameter.

\subsection{Automorphic Forms}
\label{weightprelims3}

We shall consider QUE for automorphic forms $\pi$ on $GL_2 / F$ of full level, trivial central character and cohomological type.  This means that their local factors at real places are holomorphic discrete series of even weight, and the factors at complex places have spectral parameter 0.  In the notation of section \ref{weightprelims1}, these correspond to automorphic forms on $X$ of the prescribed Archimedean type and which are eigenfunctions of the Hecke operators.  We denote the weight of $\pi$ by an $r$-tuple $k = (k_i)$, and its normalised Hecke eigenvalues by $\lambda_\pi(\p)$.  Define $\rho_k$ to be the representation

\begin{equation*}
\rho_k = \bigotimes_{i \le r_1} \chi_{k_i} \otimes \bigotimes_{i > r_1} \rho_{k_i}
\end{equation*}

of $K$, noting that in the presence of complex places $K$ will be nonabelian and $\rho_k$ will have dimension greater than one for most choices of weight.  As $\rho_k$ occurs as a $K$-type in the Archimedean component of $\pi$, there is an embedding $R_\pi$ in $\Hom_K( \rho_k, L^2(X) )$ corresponding to $\pi$.  We may associate to $R_\pi$ a section $F_k$ of the principal bundle $X \times_K \rho_k^*$ on $Y$, where we recall that for a representation $\tau$ of $K$, $X \times_K \tau$ is the quotient of $X \times \tau$ by the right $K$-action

\begin{equation*}
(x,v)k = (xk, \tau(k)^{-1} v)
\end{equation*}

so that sections of $X \times_K \tau$ may be thought of as sections of $X \times \tau$ satisfying

\begin{equation*}
\tau(k) v(xk) = v(x).
\end{equation*}

$F_k$ may be defined by the relation $R_\pi(v)(x) = (s(x), v)$ for $v \in \rho_k$ and $x \in X$, which may be unwound to give

\begin{eqnarray*}
F_k(x) & = & \prod_{i > r_1} (k_i+1)^{-1/2} \sum_t R_\pi(v_t)(x) v_t^*, \\
|F_k(x)|^2 & = & \prod_{i > r_1} (k_i+1)^{-1} \sum_t |R_\pi(v_t)(x)|^2,
\end{eqnarray*}

where $\{ v_t \}$ is a basis of $M$-eigenvectors for $\rho_k$.  Note that $| F_k(x) |^2$ descends to a function on $Y$.  Alternatively, we may define $E_k$ to be the restriction to $\Gamma$ of the representation

\begin{equation*}
\left( \bigotimes_{i \le r_1} \Sym^{k_i-2} \right) \otimes \left( \bigotimes_{i > r_1} \Sym^{k_i/2-1} \otimes \overline{\Sym}^{k_i/2-1} \right)
\end{equation*}

of $G$, and let $V_k$ the associated local system on $Y$, which we equip with a certain canonical positive definite norm.  Then $F_k$ may be thought of as a harmonic 1-form which represents a cohomology class in $H^1( Y, V_k )$ (this is why $\pi$ is referred to as being of cohomological type).  However, we will not use this point of view in this paper, and shall only refer the reader to the book of Borel and Wallach \cite{BW} where correspondences of this kind are described explicitly.  

We wish to establish the equidistribution of the probability measures $|F_k|^2 dv$ on $Y$, in generalisation of holomorhic QUE over $\Q$.  Because the $K$-integrals of $|R_\pi(v_t)|^2$ are independent of $t$, we may let $v_k \in \rho_k$ be the vector of highest weight and think of the measure $|F_k|^2 dv$ as the pushforward of $|R_\pi(v_k)|^2 dx$ from $X$.  In the case where $F$ is totally real, the reader may instead let $f$ be a holomorphic Hecke eigenform with associated representation $\pi$, and let $F_k$ be the mass function $F_k = y^{k/2} f$.  In particular, the results stated in the next section may all be read with this simpler definition in mind.

To simplify the transition from Fourier expansions to shifted convolution sums in the next chapter, we will express the Fourier expansions of all our automorphic forms by sums over the ring of integers $\OO$ rather than the inverse different $\OO^*$ as follows:

\begin{equation*}
\phi(z) = \sum_{ \xi \in \OO} a_\xi(y) e( \tr( \xi \kappa x ) ),
\end{equation*}

where $\kappa$ will denote a fixed totally positive generator of $\OO^*$ throughout.  As the $F_k$ are vector valued, it turns out that they may be expanded in Fourier series more simply by enlarging their domain $\HH_F$, in a manner which we now describe.  We identify $\HH_F$ with the subgroup $NA$ of $G$ in the standard Iwasawa factorisation, and let $\HH'_F$ be the subgroup $NAM$.  We then have an inclusion of $\HH_F$ in $\HH_F'$, and we extend our hyperbolic co-ordinate system to $\HH_F'$ by allowing $y_i$ to take complex values for $i > r_1$.  The $K$-covariance of $F_k$ means that it is determined by its values on $\HH_F'$, and these determine the embedding $R_\pi$ by the formula

\begin{equation*}
R_\pi(v)(g) = ( \rho(k)v, F_k(z) ),
\end{equation*}

where $g = zk$ is the Iwasawa factorisation of $g$.  On $\HH_F'$, we may expand $F_k$ in a Fourier series as

\begin{equation*}
F_k(z) = \sum_{\xi > 0} a_f(\xi) {\bf K}_k( \xi \kappa y) e( \tr( \xi \kappa x) ),
\end{equation*}

where ${\bf K}_k(y) = \otimes_{i=1}^r {\bf K}_i(y_i)$ and the ${\bf K}_i(y_i)$ are defined by

\begin{eqnarray}
\label{whittaker1}
{\bf K}_i(y_i) & = & ( y_i)^{k_i/2} \exp( -2\pi y_i) \quad \text{for } i \le r_1,\\
\label{whittaker2}
{\bf K}_i(y_i) & = & |y_i|^{k_i/2+1} \sum_{j=0}^{k_i} \binom{ k_i }{j}^{1/2} K_{k_i/2 - j}( 4 \pi |y_i| ) e^{ (k_i -2j)i \theta_i/2 } v_{k_i-2j}^*, \quad i > r_1,
\end{eqnarray}

and $\theta_i$ is the argument of $y_i$.  The formula for the Whittaker functions ${\bf K}_i$ at complex places is taken fron Jacquet-Langlands \cite{JL}.  The coefficients $a_f(\xi)$ are proportional to the Hecke eigenvalues $\lambda_\pi(\xi)$,

\begin{equation*}
a_f(\xi ) = \lambda_\pi(\xi) N\xi^{-1/2} a_f( 1 ),
\end{equation*}

and the first Fourier coefficient is determined by the $L^2$ normalisation of $F_k$ to be

\begin{equation}
\label{firstfourier}
| a_f( 1 ) |^2 = \prod_{i \le r_1} \frac{ (4\pi)^{k_i} }{ \Gamma(k_i) } \prod_{i > r_1} \frac{ (2\pi)^{k_i} }{ \Gamma(k_i/2 + 1)^2 } \frac{ 2^{7r_2-1} \pi^{r_1 + 3r_2} }{ |D| L(1, \sym^2 \pi ) }.
\end{equation}

(See section \ref{appnorms} for this calculation.)

\section{Statement of Results}
\label{weightresults}

Our main result is theorem \ref{weightmain}, which establishes QUE for the sections $F_k$ under the assumption that the associated cohomological representations $\pi$ satisfy the Ramanujan bound; this is known when $F$ is totally real or imaginary quadratic, as discussed below.

\begin{theorem}
\label{weightmain}
If $\phi$ is a Hecke-Maass cusp form, we have

\begin{equation*}
| \langle \phi F_k, F_k \rangle | \ll_{\phi, \epsilon, \nu} ( \log \| k \| )^{-1/30 + \epsilon}.
\end{equation*}

If $\phi$ is a pure incomplete Eisenstein series, we have

\begin{equation*}
\langle \phi F_k, F_k \rangle = \frac{1}{ Vol(Y) } \langle \phi, 1 \rangle + O_{\phi, \epsilon, \nu} ( (\log \| k \|)^{-2/15 + \epsilon}  )
\end{equation*}

\end{theorem}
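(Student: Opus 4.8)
The plan is to prove each of the two displayed estimates by combining, for a fixed test function $\phi$ of each type, two unconditional bounds for $\langle\phi F_k, F_k\rangle$ whose ranges of effectiveness complement one another, in exact parallel with the work of Holowinsky and Soundararajan over $\Q$. The two bounds are played off against the size of $L(1,\sym^2\pi)$, equivalently against the mean value $\sum_{N\p\le\|k\|}|\lambda_\pi(\p)|^2/N\p = \log\log\|k\| + \log L(1,\sym^2\pi) + O(1)$: one bound is a genuine power saving when this is small and the other when it is large, and since $(\log\|k\|)^{-1}\ll L(1,\sym^2\pi)\ll(\log\|k\|)^{O(1)}$ by known bounds (using the Ramanujan bound on the right), the two ranges overlap. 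Throughout, the normalisation (\ref{firstfourier}) and the Fourier expansion of $F_k$ on $\HH'_F$ turn $\langle\phi F_k, F_k\rangle$, after unfolding against the Fourier expansion of $\phi$, into a sum of products $\lambda_\pi(\xi)\overline{\lambda_\pi(\xi')}$ over $\xi,\xi'\in\OO^+$ weighted by archimedean Whittaker integrals built from (\ref{whittaker1})--(\ref{whittaker2}), and the Ramanujan bound $|\lambda_\pi(\p)|\le2$ (known for $F$ totally real and $F$ imaginary quadratic) controls these sums termwise; the comparability constraint $k_{i,n}\ge k_{j,n}^\nu$ enters to make $\|k\|$, $\min_i k_i$ and $Nk$ polynomially equivalent, and is the source of the dependence on $\nu$.

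The first bound is the generalisation to $F$ of Holowinsky's shifted convolution method, sections \ref{sievereal}--\ref{holosieve}. The crucial point, flagged in the introduction, is that the unfolding must be performed against the unipotent $\Gamma_U$ rather than against the Borel $\Gamma_\infty$ --- this is the reason the unipotent Eisenstein series $E(g\,|\,z)$ are introduced --- so that one is reduced to shifted convolution sums of the shape $\sum_{\xi\in\OO^+}\lambda_\pi(\xi)\overline{\lambda_\pi(\xi-h)}\,W_h(\xi)$, in which a single linear relation constrains the Fourier index and $W_h$ is a product of archimedean Whittaker integrals ($K$-Bessel integrals at the complex places). Bounding each Hecke eigenvalue by Ramanujan and running a sieve estimate for correlation sums of the nonnegative multiplicative function $\mathfrak n\mapsto|\lambda_\pi(\mathfrak n)|$ on ideals of $\OO$, uniformly in the archimedean weights, one obtains a bound of the form $|\langle\phi F_k, F_k\rangle|\ll_{\phi,\epsilon,\nu}(\log\|k\|)^{-A+\epsilon}L(1,\sym^2\pi)^{c}$ with explicit positive $A,c$, using $\sum_{N\mathfrak n\le x}|\lambda_\pi(\mathfrak n)|N\mathfrak n^{-1}\ll(\log x)\exp\bigl(\sum_{N\p\le x}(|\lambda_\pi(\p)|-1)/N\p\bigr)$ and $|\lambda_\pi(\p)|-1\le\tfrac14|\lambda_\pi(\p)|^2$. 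For $\phi$ a pure incomplete Eisenstein series one first writes it as a contour integral of the complete series $E(z,s,m)$ and shifts the contour past $\Real(s)=1$; the residue, present only when $m=0$, contributes $\frac{1}{Vol(Y)}\langle\phi,1\rangle$, and the sieve bound is applied on the shifted line. This estimate is strong when $L(1,\sym^2\pi)$ is small.

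The second bound is the extension to $F$ of Soundararajan's triple product argument, section \ref{someproof}. For $\phi$ a fixed Hecke--Maass form one applies the generalisation of Watson's identity over $F$ (following Ichino), together with the explicit evaluation of the archimedean integrals carried out in the appendix --- holomorphic discrete series matrix coefficients at the real places, and integrals of three Whittaker functions of the form (\ref{whittaker2}) at the complex places --- to express $|\langle\phi F_k, F_k\rangle|^2$ as an explicit archimedean ratio times $\Lambda(\tfrac12,\pi\times\pi\times\phi)/\Lambda(1,\sym^2\pi)^2$; using that $\pi$ has trivial central character one factors $\pi\times\pi\cong\sym^2\pi\oplus\mathbf 1$ (so the resulting factor $\Lambda(\tfrac12,\phi)$ is $O_\phi(1)$), and the analogous Rankin--Selberg computation produces for $\phi$ a complete Eisenstein series the ratio $\Lambda(\tfrac12+it,\sym^2\pi\otimes\lambda_m)/\Lambda(1,\sym^2\pi)$. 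One checks that the archimedean ratio equals $\|k\|^{O(\epsilon)}$ times the inverse square root of the analytic conductor of $\sym^2\pi\times\phi$ (resp.\ of $\sym^2\pi\otimes\lambda_m$), so it exactly absorbs the conductor growth in the subconvex bound. Feeding in Soundararajan's weak subconvexity theorem, extended to automorphic $L$-functions over $F$ and applied to $\sym^2\pi\times\phi$ on $GL_6/F$ (resp.\ $\sym^2\pi\otimes\lambda_m$ on $GL_3/F$), whose local parameters satisfy the required hypothesis because those of $\pi$ and $\phi$ do, gives $L(\tfrac12,\sym^2\pi\times\phi)\ll_{\phi,\epsilon}\mathcal{C}^{1/4}(\log\mathcal{C})^{-1+\epsilon}$ with $\mathcal{C}\asymp_\phi\|k\|^{O(1)}$, whence $\langle\phi F_k, F_k\rangle$ (resp.\ $\langle\phi F_k, F_k\rangle - \frac{1}{Vol(Y)}\langle\phi,1\rangle$) is bounded by $(\log\|k\|)^{\epsilon}$ times a fixed negative power of $\log\|k\|$ divided by $L(1,\sym^2\pi)$. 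This estimate is strong when $L(1,\sym^2\pi)$ is not too small.

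Finally, combining the two: the first bound increases in $L(1,\sym^2\pi)$, the second decreases, so their minimum is largest at the crossover, which lies in the admissible range $[(\log\|k\|)^{-1},(\log\|k\|)^{O(1)}]$; carefully tracking the constants through the shifted-convolution and subconvexity inputs yields the exponents $1/30$ for Hecke--Maass cusp forms and $2/15$ for pure incomplete Eisenstein series, the difference reflecting the square root inherent in Watson's formula in the cuspidal case. \textbf{The main obstacle} is the shifted-convolution and sieve step over $\OO$: because unfolding is over the unipotent, only one linear relation constrains the $n$-dimensional index, the sieve estimate for correlations of $\mathfrak n\mapsto|\lambda_\pi(\mathfrak n)|$ over ideals must be proved with the archimedean Bessel weights at complex places carried uniformly in $k$ (this is precisely where the Ramanujan bound is used), and the contributions of the cusp, of the Hecke characters $\lambda_m$, and of the unit group $\OO_+^\times$ must be handled. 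The subsidiary difficulties are the precise evaluation of the archimedean triple-product integrals at the complex places (products of three $K$-Bessel functions), needed to see the exact conductor cancellation, and the extension of weak subconvexity to automorphic $L$-functions over $F$.
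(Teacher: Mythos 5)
Your proposal tracks the paper's own proof: Theorem \ref{weightmain} is obtained in section \ref{mixedconclusion} by combining Theorem \ref{Home} (Holowinsky's sieve, sections \ref{sievereal}--\ref{holosieve}) with Theorem \ref{Some} (Ichino/Watson plus Soundararajan's weak subconvexity, section \ref{someproof}), playing the two estimates against the size of $L(1,\sym^2\pi)$, and you correctly identify the $GL_3$ and $GL_6$ objects, the unipotent unfolding, and the role of the Ramanujan bound and of the parameter $\nu$. Two small cautions about the details you left to ``careful tracking of constants.'' First, the Holowinsky-side bound you display, $(\log\|k\|)^{-A+\epsilon}L(1,\sym^2\pi)^{c}$ with $A>0$, has the wrong sign on $A$: what Theorem \ref{Home} together with Lemma \ref{Mkbound} actually gives is $\langle\phi F_k,F_k\rangle\ll(\log\|k\|)^{\epsilon}M_k(\pi)^{1/2}\ll(\log\|k\|)^{1/12+\epsilon}L(1,\sym^2\pi)^{1/4}$, a bound with a \emph{positive} power of $\log\|k\|$ that is only a saving when $L(1,\sym^2\pi)$ is small (if $A$ were genuinely positive the Soundararajan input would be superfluous). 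Second, and relatedly, your proposed pointwise inequality $|\lambda_\pi(\p)|-1\le\tfrac14\lambda_\pi(\p)^2$ (equivalently $2|x|\le 2+\tfrac12 x^2$) is not the one the paper uses and would not close the argument: feeding it into Lemma \ref{sym2lower1} produces $M_k(\pi)\ll(\log\|k\|)^{1/2+\epsilon}L(1,\sym^2\pi)^{-1/2}$, which \emph{increases} as $L(1,\sym^2\pi)$ shrinks, destroying the complementarity. The paper's choice $2|x|\le\tfrac23+\tfrac32 x^2$ is essential here, giving $M_k(\pi)\ll(\log\|k\|)^{1/6+\epsilon}L(1,\sym^2\pi)^{1/2}$, with the crossover against Theorem \ref{Some} at $L(1,\sym^2\pi)\asymp(\log\|k\|)^{-7/15}$ (cusp) and $(\log\|k\|)^{-13/15}$ (Eisenstein), yielding the exponents $1/30$ and $2/15$.
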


Theorem \ref{weightmain} is proven by combining the following two results, which summarise the extensions of Holowinsky and Soundararajan's respective approaches to proving the equidistribution of $F_k$.  Their statements are almost identical to those of the original theorems over $\Q$, which are recalled in section \ref{Qproof}, with the only significant difference being that in the statement of theorem \ref{Home} we must impose a mild condition that all weights tend to infinity in a uniform way.

\begin{theorem}
\label{Home}
Fix an automorphic form $\phi$, and suppose that there exists a $\nu > 0$ such that $k_i > \| k \|^\nu$ for all $i$.  Define

\begin{equation*}
M_k(\pi) = \frac{1}{ ( \log \| k \| )^2 L( 1, \sym^2 \pi ) } \prod_{N\p \le \| k \|} \left( 1 + \frac{ 2 |\lambda_\pi (\p)| }{ N\p } \right).
\end{equation*}

If $\phi$ is a Hecke-Maass cusp form, then

\begin{equation}
\label{maassbound}
\langle \phi F_k, F_k \rangle \ll_{\phi, \epsilon, \nu} ( \log \| k \| )^\epsilon M_k(\pi)^{1/2}
\end{equation}

for any $\epsilon > 0$.  If $\phi$ is a pure incomplete Eisenstein series then

\begin{equation}
\label{eisbound}
\langle \phi F_k, F_k \rangle = \frac{1}{ Vol(Y) } \langle \phi, 1 \rangle + O_{\phi, \epsilon, \nu} ( (\log \| k \|)^\epsilon M_k(\pi)^{1/2} (1 + R_k(f)) )
\end{equation}

for any $\epsilon > 0$, where

\begin{equation*}
R_k(f) = \frac{1}{\sqrt{Nk} L( 1, \sym^2 \pi ) } \sum_m \int_{-\infty}^{+\infty} \frac{ |L( 1/2+it, \sym^2 \pi \otimes \lambda_{-m} )| }{ ( |t| + \| m \|+1)^A } dt.
\end{equation*}

\end{theorem}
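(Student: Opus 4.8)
The plan is to transplant Holowinsky's sieve argument to the Hilbert/Bianchi setting, the one structural change being the point flagged in Section~\ref{weightprelims2}: one unfolds over the unipotent $\Gamma_U$ rather than the Borel $\Gamma_\infty$, so that the $x$-integration runs over the full torus $\OO\backslash\F$. First I would reduce $\langle\phi F_k,F_k\rangle$ to an expression in the Fourier coefficients $a_f(\xi)$ of $F_k$ and $\rho_\phi(\ell)$ of $\phi$. For $\phi$ a pure incomplete Eisenstein series this is an honest unfolding; for $\phi$ a Hecke--Maass cusp form, where no unfolding is available, I would instead restrict the integral to a product over the infinite places of Siegel sets $\mathfrak{S}$, the discrepancy between $\mathfrak{S}$ and a genuine fundamental domain for $Y$ lying in a compact part of the cusp on which $|F_k|^2\,dv$ has exponentially small mass. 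In either case, integrating in $x$ over $\OO\backslash\F$ collapses the double Fourier expansion of $|F_k|^2$: for the Eisenstein series only the diagonal $\xi=\eta$ survives, while for the cusp form the vanishing of $\rho_\phi(0)$ leaves only shifted terms, giving shifted convolution sums $\sum_\xi a_f(\xi)\overline{a_f(\xi+\ell)}$ weighted by $\rho_\phi(\ell)$ and an integral, over the cone $\F^+$ modulo units, of the Whittaker inner products $\langle\mathbf{K}_k(\xi\kappa y),\mathbf{K}_k((\xi+\ell)\kappa y)\rangle$ against the Whittaker function of $\phi$.

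For the Eisenstein case I would extract the main term by the Rankin--Selberg method: the pole at $s=1$ of the Mellin transform of $\|\mathbf{K}_k(\xi\kappa y)\|^2$ against $\psi(Ny)\lambda_m(y)$, summed over $\xi$ with $|a_f(\xi)|^2=|\lambda_\pi(\xi)|^2N\xi^{-1}|a_f(1)|^2$ and combined with \eqref{firstfourier}, reconstitutes $\mathrm{Vol}(Y)^{-1}\langle\phi,1\rangle$. The contribution of the critical line of that Mellin transform, which for $m\neq0$ involves $L(1/2+it,\sym^2\pi\otimes\lambda_{-m})$, is precisely the term $R_k(f)$; since the archimedean conductor of $\sym^2\pi$ is $\asymp(Nk)^{1+o(1)}$, the convexity bound already gives $R_k(f)\ll(Nk)^{-1/4+\epsilon}/L(1,\sym^2\pi)$, so this term is harmless and needs no subconvexity. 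What remains is the fluctuation of the diagonal sum about its main term, which is bounded exactly as in the cusp form case.

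The sieve is the heart of the matter. Inserting $a_f(\xi)=\lambda_\pi(\xi)N\xi^{-1/2}a_f(1)$ and \eqref{firstfourier}, both \eqref{maassbound} and the error in \eqref{eisbound} come down to an estimate of the shape
\[
\sum_{N\xi\le X}|\lambda_\pi(\xi)|\,|\lambda_\pi(\xi+h)|\ \ll_\epsilon\ \frac{X}{(\log X)^{2-\epsilon}}\prod_{N\p\le X}\Bigl(1+\frac{2|\lambda_\pi(\p)|}{N\p}\Bigr),
\]
to hold with $X\asymp Nk$ and uniformly in the shift $h$ over a range $|Nh|\le X^{O(1)}$; this is the number-field analogue of Holowinsky's bound, and rests on Shiu/Henriot-type inequalities for nonnegative multiplicative functions on the ideals of $\OO$ in short intervals and congruence classes. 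One then sums over $\xi$ and over the shift $\ell$, the archimedean integral confining $N\xi$ and $|N\ell|$ to a box of size $(Nk)^{1+o(1)}$ and the fixed form $\phi$ contributing, after Cauchy--Schwarz and Rankin--Selberg for $\phi$, a power of $Nk$ cancelled by the archimedean normalising factors. A final Cauchy--Schwarz step --- as in Holowinsky, trading the full strength of the shifted-sum bound for a square root while rendering the complementary factor trivially $O((\log\|k\|)^\epsilon)$ --- together with $|a_f(1)|^2\asymp1/L(1,\sym^2\pi)$, then produces the bound $(\log\|k\|)^\epsilon M_k(\pi)^{1/2}$. The hypothesis $k_i>\|k\|^\nu$ enters here: it makes $\log Nk\asymp_\nu\log\|k\|$, so the natural sieve cutoff $N\p\le Nk$ may be replaced by $N\p\le\|k\|$, and it forces the Whittaker functions at all places to concentrate at comparable heights, so that the localisations above are uniform.

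\textbf{The main obstacle} I expect is the uniform sieve bound for the shifted sums over $\OO$: one must carry Holowinsky's combinatorial argument, and the underlying Shiu-type estimate, over to ideals in a general number ring --- handling the units, the passage between totally positive generators $\xi$ and ideals, and, most delicately, retaining uniformity in the shift $h$ even when $h$ is large or highly composite, since this uniformity is exactly what lets the $\ell$-sum be carried out freely. A secondary technical burden is the asymptotic analysis of the vector-valued Whittaker functions $\mathbf{K}_k$ at the complex places, \eqref{whittaker1}--\eqref{whittaker2}, for which one needs uniform-in-$k$ estimates on integrals of $\langle\mathbf{K}_k,\mathbf{K}_k\rangle$ in order to fix the effective lengths of the $\xi$- and $\ell$-sums; the main-term extraction and the bound on $R_k(f)$, by contrast, are routine.
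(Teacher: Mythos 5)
Your sketch has the right strategic shape --- unfold over $\Gamma_U$ so the $y$-domain is a product, pass to shifted convolution sums, control those by a sieve --- but it omits the single most load-bearing device in Holowinsky's argument, which is the free regularisation parameter $T$. The paper fixes a positive bump $g\in C^\infty_0(\F^+)$ and introduces
\begin{equation*}
I_\phi(T) = \int_{\F^+} g(Ty)\,Ny^{-2}\Bigl(\int_{\F/\OO}\phi(z)\,|F_k(z)|^2\,dx\Bigr)\,dy,
\end{equation*}
which it computes two ways and then optimises in $T$. Symmetrising $g$ over units and inverting the Mellin transform in the characters $\lambda_m$ re-expresses $I_\phi(T)$ as a contour integral against complete Eisenstein series; shifting to $\mathrm{Re}(s)=1/2$ extracts $C_g T^n\langle\phi F_k,F_k\rangle + O(T^{n/2})$. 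Expanding the Fourier series instead produces the $S_\xi(T)$'s, each bounded via the sieve by roughly $T^{n/2}(\log\|k\|)^\epsilon M_k(\pi)$. The factor $M_k(\pi)^{1/2}$ in \eqref{maassbound} then comes from nothing more than the choice $T^n = M_k(\pi)^{-1}$ balancing the two errors. You attribute the square root to ``a final Cauchy--Schwarz step''; that is not the mechanism, and with no parameter $T$ there is no optimisation to perform and no way to produce the exponent $1/2$. Relatedly, your plan to handle the cusp-form case by restricting to ``a product over the infinite places of Siegel sets'' is not available: the unit action prevents a genuine fundamental domain for $Y$ from having product form in the $y$-coordinates, and $\Gamma_U\backslash\HH_F\to Y$ is an infinite cover, so one must regularise. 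The paper's second Eisenstein series $E(g\mid\cdot)$ is introduced precisely to make the $\Gamma_U$-unfolding rigorous for a cusp form $\phi$, which cannot itself be unfolded; the same construction also handles the incomplete Eisenstein case, so there is no case split of the kind you describe.

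A smaller point: your claimed bound on $R_k(f)$ from convexity misjudges the archimedean conductor of $\sym^2\pi$, which is $\asymp (Nk)^2$ rather than $(Nk)^{1+o(1)}$; convexity gives only $L(\tfrac12+it,\sym^2\pi\otimes\lambda_{-m})\ll (Nk)^{1/2+\epsilon}$, hence $R_k(f)\ll (Nk)^\epsilon/L(1,\sym^2\pi)$, which is not small. This does not affect Theorem~\ref{Home} itself --- the quantity $R_k(f)$ is simply carried along as named in the statement and is only estimated later (Lemma~\ref{Rkbound}), where weak subconvexity together with the Hoffstein--Lockhart lower bound is used --- but it is not a term that convexity disposes of. On the sieve, your proposal to argue via Shiu/Nair--Tenenbaum type inequalities for multiplicative functions on ideals, rather than via the large sieve for the lattice $\OO\simeq\Z^n$ as the paper does, is a legitimate alternative which the paper explicitly acknowledges as an option.
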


\begin{theorem}
\label{Some}
If $\phi$ is a Hecke-Maass cusp form, we have

\begin{equation}
\label{Some1}
| \langle \phi F_k, F_k \rangle | \ll_{\phi, \epsilon} \frac{ ( \log \| k \| )^{-1/2 + \epsilon} }{ L( 1, \sym^2 \pi ) }.
\end{equation}

If $E(\tfrac{1}{2} + it, m, \cdot )$ is a unitary Eisenstein series, we have

\begin{equation}
\label{Some2}
| \langle E(\tfrac{1}{2} + it, m, \cdot ) F_k, F_k \rangle | \ll_\epsilon (1 + |t| + \| m \|)^{2n} \frac{ ( \log \| k \| )^{-1 + \epsilon} }{ L( 1, \sym^2 \pi ) }.
\end{equation}

\end{theorem}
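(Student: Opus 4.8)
The plan is to deduce both estimates from bounds on central values of automorphic $L$-functions, combined with Soundararajan's weak subconvexity theorem applied in the analytic-conductor aspect. Throughout, write $\pi$ for the cohomological representation attached to $F_k$; it is self-dual with trivial central character, so that $L(s,\phi\times\pi\times\pi)=L(s,\phi\times\sym^2\pi)\,L(s,\phi)$.

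For the cuspidal bound \eqref{Some1} I would begin from a Watson-type triple product identity over $F$, obtained, for instance, via Ichino's adelic formula or the classical Garrett integral representation. Since $|F_k|^2\,dv$ is a probability measure, $F_k$ is already $L^2$-normalised — this is exactly what the normalisation \eqref{firstfourier} records — so the identity should take the shape
\begin{equation*}
|\langle\phi F_k, F_k\rangle|^2 = C_\infty(k)\,\frac{L(1/2,\phi\times\sym^2\pi)\,L(1/2,\phi)}{L(1,\sym^2\pi)^2\,L(1,\sym^2\phi)},
\end{equation*}
where $C_\infty(k)$ is an explicit archimedean factor — a ratio of $\Gamma$-factors times a local triple-product integral — depending only on $k$ and on the fixed Archimedean parameters of $\phi$. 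The essential computation, carried out in section \ref{appendix} from the Whittaker functions \eqref{whittaker1}--\eqref{whittaker2}, is that $C_\infty(k)\asymp_\phi C(\phi\times\sym^2\pi)^{-1/4}$; this is the precise incarnation here of the principle that, for such periods, the convexity bound for $L(1/2,\phi\times\sym^2\pi)$ coincides with the trivial bound $\langle\phi F_k,F_k\rangle\ll_\phi 1$. Since $\phi$ is fixed and $\|k\|=\max_i k_i$, one has $\log C(\phi\times\sym^2\pi)\asymp_\phi\log\|k\|$; substituting instead Soundararajan's weak subconvexity bound $L(1/2,\phi\times\sym^2\pi)\ll_{\phi,\epsilon}C(\phi\times\sym^2\pi)^{1/4}(\log\|k\|)^{-1+\epsilon}$, and using $L(1,\sym^2\pi)\gg_\epsilon(\log\|k\|)^{-\epsilon}$ together with the boundedness of the $\phi$-factors, gives $|\langle\phi F_k,F_k\rangle|^2\ll_{\phi,\epsilon}(\log\|k\|)^{-1+\epsilon}/L(1,\sym^2\pi)^2$, and taking square roots yields \eqref{Some1}.

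For the Eisenstein bound \eqref{Some2} no period identity is needed: since $E(z,\tfrac12+it,m)$ is formed by summing the character $Ny^{1/2+it}\lambda_m(y)$ over $\Gamma_\infty\backslash\Gamma$, one unfolds directly, applies Parseval to the Fourier expansion of $F_k$ along $N$ (legitimate, as $F_k$ is cuspidal and so has no degenerate term), and folds the resulting $\xi$-sum into the unit quotient; the standard identity $\sum_{\mathfrak a}|\lambda_\pi(\mathfrak a)|^2\lambda_{-m}(\mathfrak a)N\mathfrak a^{-s}=\zeta_F(s,\lambda_{-m})\,L(s,\sym^2\pi\otimes\lambda_{-m})/\zeta_F(2s,\lambda_{-2m})$ then produces
\begin{equation*}
\langle E(\tfrac12+it,m,\cdot)F_k,F_k\rangle = P(k,t,m)\,\frac{\zeta_F(\tfrac12+it,\lambda_{-m})\,L(\tfrac12+it,\sym^2\pi\otimes\lambda_{-m})}{L(1,\sym^2\pi)\,\zeta_F(1+2it,\lambda_{-2m})},
\end{equation*}
where the archimedean factor $P(k,t,m)$, evaluated from the Whittaker integral, cancels the rapidly growing factors $\prod_{i\le r_1}(4\pi)^{k_i}/\Gamma(k_i)$ and their complex-place analogues in \eqref{firstfourier} against the conductor of $L(\tfrac12+it,\sym^2\pi\otimes\lambda_{-m})$. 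Weak subconvexity for $L(\tfrac12+it,\sym^2\pi\otimes\lambda_{-m})$ in that conductor, a convexity bound for $\zeta_F(\tfrac12+it,\lambda_{-m})$, and the standard lower bound for $\zeta_F(1+2it,\lambda_{-2m})$ on the line $\Real s=1$ then give \eqref{Some2}, the polynomial factor $(1+|t|+\|m\|)^{2n}$ absorbing the Archimedean conductors of these three functions; here there is no square root, so the full power $(\log\|k\|)^{-1+\epsilon}$ survives.

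The main obstacle is the archimedean analysis underlying both cases. In the cuspidal case one must establish the Watson/Ichino identity over a field with complex places in a form with a fully explicit archimedean constant, and at a complex place the Whittaker function \eqref{whittaker2} is a sum of products of $K$-Bessel functions, so the local triple-product integral becomes a delicate multi-Bessel integral whose exact size in $\|k\|$ — not merely its size up to a power of $\|k\|$ — must be pinned down, so that the matching of convexity with the trivial bound, and hence the clean transfer of the weak subconvexity saving, goes through; these computations occupy section \ref{appendix}. A secondary, more routine obstacle is verifying that Soundararajan's weak subconvexity method — which uses only the functional equation, a bounded-degree Euler product, and the Ramanujan bound on average — extends to the $GL_n/F$ $L$-functions above; this is where the Ramanujan hypothesis assumed for $\pi$, and known over totally real and imaginary quadratic fields, enters.
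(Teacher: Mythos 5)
Your proposal follows essentially the same route as the paper: Ichino's triple product formula (with archimedean local factors computed at real places by comparison with Watson and at complex places from the multi-Bessel integrals) for the cuspidal case, direct Rankin--Selberg unfolding against $|F_k|^2$ for the Eisenstein case, and then Soundararajan's weak subconvexity applied to $L(1/2,\sym^2\pi\otimes\pi')$ and $L(1/2+it,\sym^2\pi\otimes\lambda_{-m})$ after verifying the hypotheses over $\Q$ by formal base change and the Ramanujan assumption. The only small inaccuracy is the unneeded invocation of a lower bound $L(1,\sym^2\pi)\gg(\log\|k\|)^{-\epsilon}$ in the cuspidal step — that factor carries through unchanged under the square root — but this does not affect the argument.
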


We shall prove theorem \ref{Home} in sections \ref{sievereal} to \ref{holosieve} and theorem \ref{Some} in section \ref{someproof}, before combining them to give our main result in section \ref{mixedconclusion}.  The presence of these two components and the way in which they interact makes the overall proof somewhat elaborate, and so we begin by reviewing its basic outline in the case of $SL(2,\Z)$ and giving an overview of our modifications in section \ref{weightreview}.  Our assumption that $\pi$ satisfies the Ramanujan bound is needed in the proofs of both theorem \ref{Some} and \ref{Home}, in the first case to establish the weak form of Ramanujan required by Soundararajan's weak subconvexity theorem, and in the second as an ingredient in bounding shifted convolution sums.  It is known when $F$ is totally real or imaginary quadratic, and so we have an unconditional theorem in these cases.  In the totally real case this is derived from Deligne's theorem by Blasuis in \cite{Bl}, while in the imaginary quadratic case this relies on deep work of Harris, Soudry, Taylor, Berger, Harcos et al \cite{BH, HST} and requires the construction of a theta lift from $GL_2/F$ to $GSp_4/\Q$, where complex geometry is available.  The generalisation of their results to other fields with complex places is not yet established, and consequently we have no unconditional result outside totally real and imaginary quadratic fields.  On the other hand, Ramanujan will hold for forms lifted from totally real subfields and so our theorem becomes unconditional if the family of cohomological forms of fixed level has the structure suggested by the results of \cite{FGT} and \cite{Ma1}, i.e. if base change and CM constructions account for all but finitely many forms.

The assumption we have made on the uniform growth of the weight is a purely technical one, and by combining the triple product identities in section \ref{weighttripprod} with the Lindel\"of hypothesis we see that the result should still be true without it.  The reason we have adopted it is so that when we come to the point in the generalisation of Holowinsky's theorem at which we apply the sieve, it will ensure that we are sieving over a rounded subset of the ring of integers rather than a narrow box.

Theorem \ref{weightmain} establishes QUE for any sequence of sections $F_k$ over a totally real or imaginary quadratic field whose weights tend to infinity with the required uniformity.  However, we should ask whether such a sequence exists for these fields.  When $F$ is totally real, Riemann-Roch ensures that the dimension of the space $S_k$ of holomorphic cusp forms of weight $k$ is $\sim Nk$, with an exact formula established by Shimizu in \cite{Sh}.  Over a general field, base change from $\Q$ is expected to provide $\sim k$ forms of parallel weight $k$ on a sufficiently deep congruence subgroup of $\Gamma$, where the term `parallel' means that the weights at all places are equal as in the totally real case.  In particular, for $F$ imaginary quadratic it has been proven by Finis, Grunewald and Tirao \cite{FGT} that base change produces forms of full level and so our result is not vacuous for the Bianchi manifolds.  The proof may be easily modified to allow nontrivial level in any case, so by restricting to forms base changed from $\Q$ (or another totally real subfield) which are known to satisfy Ramanujan, we may view it as having content over any solvable field $F$.

\subsection{Equidistribution of Zero Currents}
\label{currentQUE1}

One consequence of QUE for holomorphic modular forms over $\Q$ is that the zeros of a sequence of forms become equidistributed with respect to hyperbolic measure as $k \rightarrow \infty$, as was proven by Shiffman and Zelditch \cite{SZ} for compact hyperbolic surfaces and extended to $SL(2,\Z) \backslash \HH^2$ by Rudnick \cite{Ru}.  Using their methods, we have derived the analogous statement about the equidistribution of the zero divisors of holomorphic modular forms from our proof of holomorphic QUE.  We may prove this equidistribution either in the sense of measures of integration over the (smooth parts of the) zero divisors, or in the more refined sense of Lelong (1,1)-currents, which we now describe.

We now use $\HH^n$ to denote the product of $n$ copies of the upper half plane, so that the holomorphic forms $f$ we consider live on $Y = \Gamma \backslash \HH^n$.  In higher dimensions we may replace the sum of delta measures at the zeros of $f$ by the current of integration over its zero divisor $Z_f$, which is a distribution on differential forms of bidegree $(n-1, n-1)$.  If $Z_f = \sum_i \ord_{V_i} (f) V_i$ is the expression of $Z_f$ as the sum of irreducible subvarieties, then

\begin{equation}
\label{zerocurrent}
( Z_f, \phi) = \sum_i \ord_{V_i}(f) \int_{V_i} \phi
\end{equation}

for all smooth, compactly supported forms $\phi$ on $\Gamma \backslash \HH^n$.  To define these notions in the presence of torsion in $\Gamma$, we use the standard procedure of choosing $\Gamma' \subset \Gamma$ finite index and torsion free, and defining forms, subvarieties etc. on $\Gamma \backslash \HH^n$ to be those on $\Gamma' \backslash \HH^n$ which are invariant under $\Gamma$.  Integrals such as (\ref{zerocurrent}) are defined to be the lifted integral on $\Gamma' \backslash \HH^n$ divided by $| \Gamma' : \Gamma |$.  We shall use $\overset{w^*}{\longrightarrow}$ to denote weak$^*$ convergence of currents.  With these notions in mind, we may state our result.

\begin{theorem}
\label{currentQUE}

Fix a weight $k = (k_i)$, $k_i > 0$, and let $\{ f_N \}$ be a sequence of holomorphic Hecke modular forms of weight $Nk$.  Define

\begin{eqnarray*}
\omega & = & \frac{-i}{2\pi} \partial \overline{\partial} \log y^k \\
 & = & \frac{1}{4 \pi} \sum k_i y_i^{-2} dx_i \wedge dy_i.
\end{eqnarray*}

If $Z_N$ are the zero divisors of $f_N$, then $\tfrac{1}{N} Z_N \overset{w^*}{\longrightarrow} \omega$, i.e.

\begin{equation*}
\underset{N \rightarrow \infty}{\lim} \left( \tfrac{1}{N} Z_N, \phi \right) = \int_Y \omega \wedge \phi
\end{equation*}

for all continuous, compactly supported $(n-1,n-1)$-forms $\phi$.  In particular, if $k = (2, \ldots, 2)$ then $\tfrac{1}{N} Z_N \overset{w^*}{\longrightarrow} \omega_0$, the K\"ahler form of $Y$ with the product hyperbolic metric.

\end{theorem}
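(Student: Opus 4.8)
The plan is to adapt the $\partial\bar\partial$-potential argument of Shiffman--Zelditch and Rudnick to the Hilbert modular setting, using Theorem~\ref{hilbertQUE} (equivalently the cuspidal and Eisenstein cases of Theorem~\ref{weightmain}) as the analytic input. The starting point is the Poincar\'e--Lelong formula: if $f_N$ is holomorphic and not identically zero, then as currents on $\HH^n$ (hence on the torsion-free cover $\Gamma'\backslash\HH^n$),
\begin{equation*}
Z_{f_N} = \frac{i}{\pi}\,\partial\bar\partial \log |f_N|.
\end{equation*}
Dividing by $N$ and subtracting the fixed form $\omega = \frac{-i}{2\pi}\partial\bar\partial\log y^k$, and using that $Z_N$ has weight $Nk$ so that $y^{Nk}|f_N|^2$ is $\Gamma$-invariant, I would rewrite
\begin{equation*}
\tfrac{1}{N} Z_N - \omega = \frac{i}{2\pi N}\,\partial\bar\partial \log\bigl( y^{Nk} |f_N|^2 \bigr) = \frac{i}{2\pi N}\,\partial\bar\partial \log \mu_N,
\end{equation*}
where $\mu_N = y^{Nk}|f_N|^2 dv$ is (a constant multiple of) the Petersson mass, now a genuine function on $Y$. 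Pairing against a test form $\phi$ and integrating $\partial\bar\partial$ by parts twice moves the derivatives onto $\phi$:
\begin{equation*}
\Bigl( \tfrac{1}{N} Z_N - \omega, \phi \Bigr) = \frac{i}{2\pi N} \int_Y \log\bigl( y^{Nk}|f_N|^2 \bigr)\, \partial\bar\partial \phi.
\end{equation*}
So it suffices to show the right-hand side tends to $0$ for every smooth compactly supported $(n-1,n-1)$-form $\phi$, and then extend to continuous $\phi$ by a regularization/density argument (as in Rudnick), using a uniform bound on the masses $(\tfrac{1}{N}Z_N,\phi)$ for $\phi$ ranging in a compact family, which follows from positivity of $Z_N$ together with the already-established weak convergence against a single smooth compactly supported bump.

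The heart of the matter is therefore the bound $\frac{1}{N}\int_Y \bigl|\log\mu_N\bigr|\,|\partial\bar\partial\phi| = o(1)$, i.e. an $L^1$ (in fact $o(N)$) control on $\log\mu_N$ against a fixed smooth measure. The upper bound $\int_Y (\log^+\mu_N)\,d(\text{smooth}) = O(\log N)$ is routine: it follows from an $L^\infty$ or $L^2$ bound on $\mu_N$ over compacta, e.g. from the Fourier expansion and the Ramanujan/Hecke bounds, or simply from $\mu_N$ being a probability measure plus elliptic/subharmonicity estimates. The genuinely delicate half is the lower bound, i.e. controlling where $\log\mu_N$ is very negative --- near the zeros of $f_N$, and in the cusp. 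Here the standard device is: (i) $\log\mu_N$ is (pluri)subharmonic up to the smooth correction $\log y^{Nk}$, so $\int_{B}\log\mu_N$ over a ball is bounded below by its value at the center minus a controlled constant, reducing a negative $L^1$ mass bound to a pointwise lower bound at suitably chosen points; and (ii) on any fixed compact set $\Omega\subset Y$ one shows $\sup_\Omega \mu_N \geq c(\Omega) > 0$ uniformly in $N$ --- this is exactly where Theorem~\ref{hilbertQUE} enters, since weak convergence of $\mu_N$ to the (positive) hyperbolic probability measure forces $\mu_N$ not to vanish identically on $\Omega$ for large $N$, and a Harnack/mean-value inequality for the near-subharmonic function $\log\mu_N$ then converts the lower bound at one point into the required integral estimate. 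One must also handle the cuspidal region: there $\mu_N$ decays exponentially, $\log\mu_N \sim -cN y$, but $\phi$ is compactly supported so $\partial\bar\partial\phi$ is supported away from the cusp and this region contributes nothing --- the only subtlety is that the subharmonic averaging in step (i) must use balls that stay inside the support region, which is fine for $\phi$ fixed.

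I expect the main obstacle to be making the lower bound on $\log\mu_N$ effective and uniform in $N$: one needs that the Petersson mass cannot be uniformly tiny on a fixed ball, quantitatively enough that, after the subharmonic mean-value step, the total negative contribution is $o(N)$. Rudnick's trick over $\Q$ is to invoke QUE not just for one test function but to get a lower bound for the mass of $\mu_N$ on an arbitrarily small but fixed ball, then use the subharmonicity to spread this out; the same should work here, with $\partial\bar\partial$ in place of the Laplacian and with the multi-variable hyperbolic Laplacian's Harnack inequality. A secondary technical point is the passage from smooth to merely continuous $\phi$ in the weak$^*$ statement: since $\tfrac1N Z_N$ is a positive current one smooths $\phi$, uses the smooth case, and controls the error by the total mass $(\tfrac1N Z_N, \chi)$ for a fixed smooth $\chi \geq 1$ on $\mathrm{supp}\,\phi$, which is bounded by the smooth case applied to $\chi$. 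Finally, the special case $k=(2,\dots,2)$: there $y^{k} = \prod y_i^2$ and $\omega = \frac14\sum y_i^{-2}dx_i\wedge dy_i$ is precisely the K\"ahler form $\omega_0$ of the product hyperbolic metric (up to the standard normalization), so that statement is immediate from the general one.
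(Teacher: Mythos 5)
Your proposal follows the same route as the paper's: a Poincar\'e--Lelong reduction of $\tfrac{1}{N}Z_N - \omega$ to the scalar potential $\tfrac{1}{2N}\log\bigl(y^{Nk}|f_N|^2\bigr)$, which is then shown to converge weakly to $0$ using plurisubharmonicity, a locally uniform sup bound on the Petersson mass (the paper's Lemma \ref{pointwise}, your ``routine'' upper bound), and the mass equidistribution from Theorem \ref{hilbertQUE} as the input that rules out $\log\mu_N\to-\infty$. The only cosmetic difference is that you write the Lelong identity directly on $Y$ (legitimate since $\mu_N$ is $\Gamma$-invariant), whereas the paper states the equivalent identity after unfolding to $\HH^n$ as Lemma \ref{lelong}; otherwise this is the same Shiffman--Zelditch/Rudnick argument.
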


This theorem is based on ideas from complex potential theory as developed for problems in quantum chaos in \cite{NV, Ru, SZ}.  It may be loosely interpreted as saying that not only do the (smooth parts of the) submanifolds $Z_N$ become equidistributed as measures of integration with respect to the induced Riemannian volume, but that the directions in which their tangent subspaces lie are also becoming equidistributed.  We prove theorem \ref{currentQUE} in section \ref{currents}.

\section{Outline of the Proof}
\label{weightreview}

\subsection{The Proof Over $\Q$}
\label{Qproof}

We begin by giving an outline of Holowinsky and Soundararajan's proof over $\Q$, as our proof over a number field runs on the same lines as theirs.  Suppose $f$ is a holomorphic Hecke eigenform of weight $k$ on $Y = SL(2,\Z) \backslash \HH^2$, with associated mass function $F_k = y^{k/2} f$.  We wish to show that the normalised probability measure $\mu_f = |F_k|^2 y^{-2} dx dy$ tends weakly to hyperbolic measure $\tfrac{3}{\pi} y^{-2} dx dy$ as $k$ tends to infinity, i.e. that for all $h \in C^\infty_0(X)$

\begin{equation*}
\mu_f(h) = \int_Y  h |F_k|^2 y^{-2} dx dy \rightarrow \frac{3}{\pi} \langle h, 1 \rangle.
\end{equation*}

In \cite{Ho, HS, So1}, Holowinsky and Soundararajan have established this by decomposing $h$ in two different bases for smooth functions on $X$, the first a complete set of eigenfunctions for the Laplacian and the second the incomplete Poincare series $P_m$, defined by

\begin{equation*}
P_m( \psi | z) = \sum_{\gamma \in \Gamma_\infty \backslash \Gamma } e( m x(\gamma z) ) \psi( y ( \gamma z) )
\end{equation*}

for $m \in \Z$ and $\psi \in C^\infty_0(\R^+)$.  The chosen basis of Laplace eigenfunctions consists of the constant function, Hecke-Maass cusp forms $\phi$ and unitary Eisenstein series $E(\tfrac{1}{2} + it, \cdot )$, and the corresponding integrals which must be estimated are $\langle \phi F_k, F_k \rangle$ and $\langle E(\tfrac{1}{2} + it, \cdot ) F_k, F_k \rangle$.  These integrals may be expressed in terms of central $L$-values, using the classical Rankin-Selberg formula in the first case and Watson's formula in the second, and so one may hope that the theory of $L$ functions would provide nontrivial upper bounds for them.  The convex bound just fails to be of use here, however by strengthening the convex bound by a factor of $(\log C)^{-1 + \epsilon}$ where $C$ is the analytic conductor Soundararajan obtains the following result:

\begin{theorem}
\label{So}
If $\phi$ is a Hecke-Maass cusp form, we have

\begin{equation*}
| \langle \phi F_k, F_k \rangle | \ll_{\phi, \epsilon} \frac{ ( \log k )^{-1/2 + \epsilon} }{ L( 1, \sym^2 f ) }.
\end{equation*}

If $E(\tfrac{1}{2} + it, \cdot )$ is a unitary Eisenstein series, we have

\begin{equation*}
| \langle E(\tfrac{1}{2} + it, \cdot ) F_k, F_k \rangle | \ll_\epsilon (1 + |t|)^2 \frac{ ( \log k )^{-1 + \epsilon} }{ L( 1, \sym^2 f ) }.
\end{equation*}

\end{theorem}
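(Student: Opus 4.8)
This bound is due to Soundararajan; the idea is to recast each inner product as a central value of an automorphic $L$-function and then apply his weak subconvexity theorem, the point being that the convexity bound for the relevant $L$-value just fails, by a single power of $\log k$, to give the result. Suppose first that $\phi$ is a Hecke--Maass cusp form, with spectral parameter $t_\phi$. Since $F_k = y^{k/2} f$, the quantity $\langle \phi F_k, F_k \rangle$ is the classical triple product $\int_Y \phi\cdot f\cdot\overline{f}\,y^k\,dv$, and Watson's formula \cite{W}, applied with all three forms $L^2$-normalised, gives an identity
\[
|\langle \phi F_k, F_k\rangle|^2 \;=\; c_\infty(k,t_\phi)\,\frac{L(\tfrac12,\phi\times f\times f)}{L(1,\sym^2 f)^2\, L(1,\sym^2\phi)},
\]
where $c_\infty$ is the archimedean local triple-product integral. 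Two reductions now apply: the factorisation $f\otimes f = \mathbf{1}\oplus\sym^2 f$ gives $L(s,\phi\times f\times f) = L(s,\phi)\,L(s,\phi\times\sym^2 f)$, and the weight-aspect asymptotics of $c_\infty$ (a Bessel/beta-type integral, evaluated by Stirling from the explicit gamma factors or by stationary phase) are $c_\infty(k,t_\phi)\asymp_\phi k^{-1}$. As $L(\tfrac12,\phi)$ and $L(1,\sym^2\phi)$ depend only on $\phi$, this yields $|\langle\phi F_k,F_k\rangle|^2 \ll_\phi k^{-1}L(\tfrac12,\phi\times\sym^2 f)/L(1,\sym^2 f)^2$.

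Now $\sym^2 f$ is a cuspidal automorphic form on $GL_3$ (Gelbart--Jacquet) and $\phi\times\sym^2 f$ is automorphic on $GL_6$; because the archimedean Langlands parameters of $\sym^2 f$ are $\{k-1,0,-(k-1)\}$, the analytic conductor of $L(s,\phi\times\sym^2 f)$ is $\asymp_\phi k^4$, so the convexity bound gives $L(\tfrac12,\phi\times\sym^2 f)\ll_\phi k^{1+\epsilon}$, which just misses the claim. Deligne's bound for $f$ and the Kim--Sarnak bounds for $\phi$ show that $\phi\times f\times f$ satisfies the weak Ramanujan hypothesis, so Soundararajan's weak subconvexity theorem \cite{So1} applies and upgrades this to $L(\tfrac12,\phi\times\sym^2 f)\ll_\phi k(\log k)^{-1+\epsilon}$; substituting and taking a square root gives $|\langle\phi F_k,F_k\rangle|\ll_{\phi,\epsilon}(\log k)^{-1/2+\epsilon}/L(1,\sym^2 f)$.

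For a unitary Eisenstein series $E(\tfrac12+it,\cdot)$ one uses Rankin--Selberg unfolding in place of Watson's formula. Expanding $|F_k|^2$ in its Fourier series and unfolding $E(s,z)=\sum_{\Gamma_\infty\backslash\Gamma}y(\gamma z)^s$ gives, for $\Real s>1$ and hence for all $s$ by meromorphic continuation,
\[
\langle E(\tfrac12+it,\cdot)F_k,F_k\rangle = |a_f(1)|^2\,\frac{\Gamma(k-\tfrac12+it)}{(4\pi)^{\,k-1/2+it}}\cdot\frac{\zeta(\tfrac12+it)\,L(\tfrac12+it,\sym^2 f)}{\zeta(1+2it)}.
\]
Inserting the value of $|a_f(1)|^2$ forced by the $L^2$-normalisation (as in \eqref{firstfourier}, one has $|a_f(1)|^2\asymp (4\pi)^k/(\Gamma(k)L(1,\sym^2 f))$) and using $\Gamma(k-\tfrac12+it)/\Gamma(k)\asymp k^{-1/2+it}$, the prefactor is $\asymp k^{-1/2}/L(1,\sym^2 f)$ up to a bounded power of $1+|t|$. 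The analytic conductor of $L(\tfrac12+it,\sym^2 f)$ is $\asymp k^2(1+|t|)^3$, so convexity again just fails while weak subconvexity --- valid since $\sym^2 f$ satisfies Ramanujan by Deligne --- gives $L(\tfrac12+it,\sym^2 f)\ll k^{1/2}(1+|t|)^{3/4}(\log k)^{-1+\epsilon}$. Combined with $|\zeta(\tfrac12+it)|\ll(1+|t|)^{1/4}$ and the classical $1/|\zeta(1+2it)|\ll\log(2+|t|)$, the factor $k^{-1/2}$ cancels the $k^{1/2}$ and we obtain $|\langle E(\tfrac12+it,\cdot)F_k,F_k\rangle|\ll_\epsilon(1+|t|)^2(\log k)^{-1+\epsilon}/L(1,\sym^2 f)$, with room to spare in the $t$-aspect.

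The one genuinely serious ingredient is Soundararajan's weak subconvexity theorem, which is established separately and used here as a black box; granting it, the remaining effort is archimedean bookkeeping --- pinning down the weight-aspect asymptotic $c_\infty\asymp k^{-1}$ and tracking the normalising constant $|a_f(1)|^2$ together with the powers of $4\pi$ so that the powers of $k$ cancel exactly --- along with the routine verification, unconditional over $\Q$, of the weak Ramanujan hypothesis for the isobaric forms in question.
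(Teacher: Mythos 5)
Your proposal is correct and follows essentially the same route as the paper takes in Section \ref{someproof} for the number-field generalisation (Theorem \ref{Some}), specialised to $\Q$: express the Maass-form inner product via Watson/Ichino as a central triple-product value, show the archimedean factor is $\asymp k^{-1}$, factor off $L(\tfrac12,\phi)$, and apply weak subconvexity to $L(\tfrac12,\phi\times\sym^2 f)$ with conductor $\asymp k^4$; and for Eisenstein series, use Rankin--Selberg unfolding to reduce to $L(\tfrac12+it,\sym^2 f)$ with conductor $\asymp k^2(1+|t|)^3$. (The paper itself imports Theorem \ref{So} from Soundararajan's work rather than reproving it, but the proof of the generalisation in \ref{someproof} is precisely the argument you give, with Ichino's formula replacing Watson's and Hecke $L$-functions replacing $\zeta$.)
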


The equidistribution of $\mu_f$ would follow from theorem \ref{So} if one knew that $L(1, \sym^2 f) \gg (\log k)^{-1/2 + \delta}$ for some $\delta > 0$.  This is certainly expected, as it follows from the generalised Riemann hypothesis that $L(1, \sym^2 f)$ is bounded below by a power of $\ln \ln k$.  The best unconditional bound in this direction is due to Hoffstein and Lockhart \cite{HL}, and Goldfeld, Hoffstein and Lockhart \cite{GHL}, who prove that $L(1, \sym^2 f) \gg (\log k)^{-1}$; this is a deep result analogous to proving that there is no Siegel zero.  The bound $L(1, \sym^2 f) \gg (\log k)^{-1/2 + \delta}$ is known unconditionally for all but $K^\epsilon$ eigenforms of weight $\le K$ by a zero density argument, however one cannot rule out those forms with small values of $L( 1, \sym^2 f )$ for which Soundararajan's approach is insufficient.

Holowinsky's approach is to test $\mu_f$ against incomplete Poincare and Eisenstein series.  This is equivalent to testing $\mu_f$ against Hecke-Maass cusp forms and incomplete Eisenstein series, and evaluating the inner products $\langle \phi F_k, F_k \rangle$ by regularising them with a second incomplete Eisenstein series and then unfolding.  In doing this one is led to estimating the shifted convolution sums

\begin{equation*}
\sum_{n \sim k} \lambda_f(n) \lambda_f(n+l)
\end{equation*}

for fixed $l$ as $k \rightarrow \infty$, where $\lambda_f$ are the automorphically normalised Hecke eigenvalues of $f$, and quite strikingly one is able to obtain useful bounds for these by taking absolute values of the terms and forgoing any additive cancellation.  The idea behind this is that the eigenvalues $\lambda_f(p)$ not only satisfy the Ramanujan bound $|\lambda_f(p)| \le 2$, but are distributed in the interval $[-2,2]$ according to Sato-Tate measure and so on average $|\lambda_f(p)|$ will be significantly smaller than 2 (we do not need to consider dihedral forms as we are working at full level).  Moreover, as a typical $\lambda_f(n)$ is a product of many $\lambda_f(p)$'s this leads to a gain on average over the bound $|\lambda_f(n)| \le \tau(n)$.  This phenomenon may also be seen in the work of Elliot, Moreno and Shahidi \cite{EMS} where they prove the bound

\begin{equation*}
\sum_{n \le x} | \tau(n) | \ll x^{13/2} (\log x)^{-1/18},
\end{equation*}

where $\tau$ here denotes Ramanujan's $\tau$-function.  Holowinsky uses this idea, combined with a large sieve to show that $n$ and $n+l$ seldom both have small prime factors, to prove the following:

\begin{theorem}
\label{Ho}
If $\lambda_f$ are the normalised Hecke eigenvalues as above, define

\begin{equation*}
M_k(f) = \frac{1}{ (\log k)^2 L( 1, \sym^2 f ) } \prod_{p \le k} \left( 1 + \frac{ 2 | \lambda_f(p) | }{p} \right).
\end{equation*}

If $\phi$ is a Hecke-Maass cusp form, we have

\begin{equation*}
| \langle \phi F_k, F_k \rangle | \ll_{\phi, \epsilon} ( \log k )^\epsilon M_k(f)^{1/2}.
\end{equation*}

If $E( \psi | \, \cdot \, )$ is an incomplete Eisenstein series, we have

\begin{equation*}
| \langle E( \psi | \, \cdot \, ) F_k, F_k \rangle - \tfrac{3}{\pi} \langle E( \psi | \, \cdot \, ), 1 \rangle | \ll_{\psi,\epsilon}  ( \log k )^\epsilon M_k(f)^{1/2} ( 1 + R_k(f) ),
\end{equation*}

where

\begin{equation*}
R_k(f) = \frac{1}{ k^{1/2} L( 1, \sym^2 f ) } \int_{-\infty}^\infty \frac{ | L( 1/2 + it, \sym^2 f ) | }{ ( 1 + |t| )^{10} } dt.
\end{equation*}

\end{theorem}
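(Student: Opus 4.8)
The plan is to follow Holowinsky's strategy: convert each inner product into an Archimedean-weighted shifted convolution sum, carry out the Archimedean analysis to localise the sum on the dyadic range $n\asymp k$, and then bound the resulting shifted convolution sum of $|\lambda_f|$'s by a sieve argument that exploits both the Ramanujan bound $|\lambda_f(p)|\le 2$ and the fact that $|\lambda_f(p)|$ is, on average, bounded away from $2$.

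\emph{Reduction to shifted convolution sums.} Write $f(z)=\sum_{n\ge 1}c_n e(nz)$, so $c_n=\lambda_f(n)n^{(k-1)/2}a_f(1)$ and $|F_k(z)|^2=y^k|f(z)|^2$. The weight $y^k e^{-4\pi n y}$ attached to the $n$th term peaks at $y=k/4\pi n$, so $|F_k|^2$ carries essentially all of its mass on a fixed compact range of $y$, coming from $n\asymp k$, while the regions $y\gg k$ and $y\ll 1$ contribute $O(k^{-A})$ for every $A$. Regularising $\langle\phi F_k,F_k\rangle$ and $\langle E(\psi|\cdot)F_k,F_k\rangle$ by a fixed incomplete Eisenstein series thus replaces the integral over the fundamental domain by one over a horizontal strip $\{|x|\le\tfrac12,\ y>\tfrac12\}$ up to $O(k^{-A})$, on which one expands in Fourier series in $x$. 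For a Hecke--Maass cusp form $\phi(z)=\sum_{m\ne 0}\rho_\phi(m)\sqrt y\,K_{it_\phi}(2\pi|m|y)e(mx)$ the $x$-integral pairs its $m$th coefficient with the $(-m)$th coefficient of $|F_k|^2$, giving
\[\langle\phi F_k,F_k\rangle=\sum_{m\ne 0}\rho_\phi(m)\sum_{n\ge 1}c_n\overline{c_{n+m}}\,I_{k,m}(n)+O(k^{-A}),\]
where $I_{k,m}(n)=\int_0^\infty\psi_0(y)\sqrt y\,K_{it_\phi}(2\pi|m|y)\,y^{k-2}e^{-2\pi(2n+m)y}\,dy$ for a fixed cutoff $\psi_0$; for an incomplete Eisenstein series only the constant term $y^k\sum_n|c_n|^2 e^{-4\pi n y}$ of $|F_k|^2$ survives the unfolding. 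Since $c_n\overline{c_{n+m}}=\lambda_f(n)\lambda_f(n+m)\,(n(n+m))^{(k-1)/2}|a_f(1)|^2$ and the $L^2$-normalisation \eqref{firstfourier} reads $|a_f(1)|^2\asymp(4\pi)^k/(\Gamma(k)L(1,\sym^2 f))$ here, the factor $1/L(1,\sym^2 f)$ comes out in front.

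\emph{Archimedean analysis.} Evaluate $I_{k,m}(n)$ by Laplace's method: its $y$-mass sits at $y_\ast\asymp k/n$, so $\psi_0$ restricts the sum to $n\asymp k$ up to a negligible error, and there $I_{k,m}(n)\asymp\Gamma(k-1)(2\pi(2n+m))^{-(k-1)}$ times a factor $K_{it_\phi}(2\pi|m|y_\ast)\ll e^{-c|m|}$, which decays exponentially in $m$ with only polynomial dependence on $t_\phi$. Using $(n(n+m))^{(k-1)/2}/(2n+m)^{k-1}\asymp 4^{-(k-1)/2}$ for $|m|=o(\sqrt n)$, the identity $|a_f(1)|^2\Gamma(k-1)(4\pi)^{-(k-1)}\asymp 1/(kL(1,\sym^2 f))$, and $\sum_{m\ne 0}|\rho_\phi(m)|e^{-c|m|}\ll_\phi 1$ (from $\sum_{|m|\le M}|\rho_\phi(m)|^2\ll_\phi M$), the Maass case reduces to
\[|\langle\phi F_k,F_k\rangle|\ll_{\phi,\epsilon}\frac{(\log k)^\epsilon}{L(1,\sym^2 f)}\,\max_{1\le|m|\ll\log k}\frac1k\sum_{n\asymp k}|\lambda_f(n)\lambda_f(n+m)|+O(k^{-A}).\]
For an incomplete Eisenstein series, feeding the constant term into Mellin inversion and moving the contour past the simple pole of $\zeta(s)L(s,\sym^2 f)$ at $s=1$ produces the main term $\tfrac3\pi\langle E(\psi|\cdot),1\rangle$ together with a contour error $\ll R_k(f)$.

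\emph{The shifted convolution sum and assembly.} The crux is the estimate
\[\sum_{n\le X}|\lambda_f(n)\lambda_f(n+m)|\ll_\epsilon(\log X)^\epsilon\,\frac{X}{(\log X)^2}\prod_{p\le X}\Bigl(1+\frac{2|\lambda_f(p)|}{p}\Bigr)\tau(m)^{O(1)},\]
which inserted into the previous step gives $|\langle\phi F_k,F_k\rangle|\ll_{\phi,\epsilon}(\log k)^\epsilon M_k(f)$; and since $2|\lambda_f(p)|\le 1+\lambda_f(p)^2$ together with $\lambda_f(p)^2=\lambda_f(p^2)+1$ force $\prod_{p\le k}(1+2|\lambda_f(p)|/p)\ll L(1,\sym^2 f)(\log k)^2$, hence $M_k(f)\ll 1$, this yields the stated $(\log k)^\epsilon M_k(f)^{1/2}$, while for the incomplete Eisenstein series one gets the main term $\tfrac3\pi\langle E(\psi|\cdot),1\rangle$ with error $\ll(\log k)^\epsilon M_k(f)^{1/2}(1+R_k(f))$, the $R_k(f)$ supplied by the contour shift. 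The trivial bound $|\lambda_f(n)|\le\tau(n)$ gives only $\sum_{n\le X}\tau(n)\tau(n+m)\asymp X(\log X)^2$ and no saving; the gain rests on two facts. First, $|\lambda_f(p)|\le 2$ makes $F=|\lambda_f|$ a multiplicative function with $F(p^a)\le a+1$, so a Shiu--Nair--Tenenbaum type mean-value estimate for such an $F$ evaluated at the pair of coprime linear forms $n,\,n+m$ applies, producing one factor $1/\log X$ and one Euler factor $1+|\lambda_f(p)|/p$ for each form, with the harmless $\tau(m)^{O(1)}$ at $p\mid m$. Second, the proof of that estimate uses a large/Selberg sieve bound showing that the $n\le X$ for which $n$ and $n+m$ both have an atypically large number of prime factors below a threshold $z=X^{1/u}$ number $O(X(\log X)^{-2-c(u)})$, so the rare $n$ are absorbed by the crude $\tau(n)\tau(n+m)$ while for the typical $n$ the local-to-global product is valid. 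Making the sieve cooperate with the multiplicative mean value --- uniformly in $m$ and with the exact power of $\log X$ --- is Holowinsky's central innovation (the shifted-binary analogue of the Elliott--Moreno--Shahidi estimate recalled above) and is the main obstacle; the rest is bookkeeping, and the Ramanujan bound enters only here.
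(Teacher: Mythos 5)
Your reduction to shifted convolution sums is broken at the unfolding step, and the damage propagates to the final assembly. You assert that after regularising by a \emph{fixed} incomplete Eisenstein series, the fundamental-domain integral agrees with a strip integral $\int_{\Gamma_\infty\backslash\HH}\psi_0(y)\phi(z)|F_k(z)|^2\,d\mu$ up to $O(k^{-A})$, because $|F_k|^2$ carries "essentially all of its mass on a fixed compact range of $y$." This is false. The Rankin--Selberg expansion $\int_{|x|\le1/2}|F_k|^2\,dx=y^k\sum_n|c_n|^2e^{-4\pi ny}$ corresponds to a Dirichlet series with a pole at $s=1$, so the unregularised strip integral $\int_{\Gamma_\infty\backslash\HH}|F_k|^2\,d\mu$ diverges: the mass in any dyadic window $y\asymp T^{-1}$ is $\asymp 1$ for \emph{every} $T$ (it covers $\sim T$ copies of the fundamental domain). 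Holowinsky's construction therefore introduces a regularising bump $g(Ty)$ with a \emph{growing} parameter $T$, and after the contour shift one gets $I_\phi(T)=C_gT\langle\phi F_k,F_k\rangle+O(T^{1/2})$, i.e. a relative error $O(T^{-1/2})$, not $O(k^{-A})$. The off-diagonal shifted-convolution contribution then scales like $T^{1/2}(\log k)^\epsilon M_k(f)$ (since both the Fourier tail $|\xi|\ll T^{1+\epsilon}$ and the truncation $n\ll Tk$ depend on $T$), and it is the optimisation $T\sim M_k(f)^{-1}$, balancing the two sources of error, that produces the exponent $1/2$ in $M_k(f)^{1/2}$. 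Your attempt to conjure the $1/2$ afterwards from the observation $M_k(f)\ll(\log k)^\epsilon$ is incidental and, in any case, supported by a bound $\langle\phi F_k,F_k\rangle\ll(\log k)^\epsilon M_k(f)$ that was never correctly derived: with a fixed regularisation scale the residual error is $O(1)$, which is useless. (The self-referential bound $M_k(f)\ll(\log\log k)^3$ that follows from $2|x|\le 1+x^2$ and Lemma~\ref{sym2lower1} is true but is not what is needed; the useful relation is the sharper Lemma~\ref{Mkbound}, $M_k\ll(\log k)^{1/6+\epsilon}L(1,\sym^2 f)^{1/2}$, used later in the Holowinsky--Soundararajan dichotomy.)

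The incomplete Eisenstein case has the same gap in a different guise. You claim "only the constant term of $|F_k|^2$ survives the unfolding," get the main term plus a contour error $\ll R_k(f)$, and then simply append the factor $M_k(f)^{1/2}$ to the conclusion. But a direct Rankin--Selberg unfolding against $E(\psi|\cdot)$ produces no shifted convolutions and hence no $M_k(f)^{1/2}$; what Holowinsky does is regularise $I_\phi(T)=\int g(Ty)E(\psi|z)|F_k|^2\,d\mu$ and then expand the \emph{automorphic} function $E(\psi|z)$ in its nontrivial Fourier series on the strip, whose nonzero coefficients (coming from the $\gamma\ne 1$ translates in the Eisenstein sum) give the shifted-convolution terms $S_\xi(T)$ and hence the $M_k(f)^{1/2}$ upon the same $T$-optimisation, while the $\xi=0$ term yields the main term and the $R_k(f)$. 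Your shifted-convolution estimate and its sieve sketch are right and in the spirit of Proposition~\ref{shiftbound}, but the assembly has to run through the two-parameter balance (contour error $T^{-1/2}$ versus off-diagonal $T^{1/2}M_k(f)$), which is precisely what your reduction omits.
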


One can see the appeal to Sato-Tate in the quantity $M_k(f)$ appearing in theorem \ref{Ho}; if we only apply the bound $|\lambda_f(p)| \le 2$ to this, one finds that $M_k(f) \ll (\ln k)^2 L(1, \sym^2 f)^{-1}$ which is of no use.  However, under certain natural assumptions about the distribution of $\lambda_f(p)$ it may be shown that $M_k(f)$ is small - more precisely, in \cite{Ho2} Holowinsky shows that if neither $L(1, \sym^2 f)$ or $L(1, \sym^4 f)$ are small then we have $M_k(f) \ll (\ln k)^{-\delta}$ for some $\delta > 0$.  As with Soundararajan's theorem, these assumptions may also be shown to hold for almost all eigenforms using zero density estimates.

Surprisingly, while both of these approaches may fail it can be shown that together they cover all cases completely.  Intuitively speaking, if $L( 1, \sym^2 f ) < (\log k)^{-1/2 + \delta}$ is small then we should have $\lambda_f(p^2) \sim -1$ for most primes $p \le k$ (a Siegel zero type phenomenon).  However, $M_k(f)$ is proven in \cite{HS} to satisfy the upper bound

\begin{equation}
\label{Mk}
M_k(f) \ll (\log k)^\epsilon \exp \left( - \sum_{p \le k} \frac{ ( |\lambda_f(p)| - 1)^2 }{ p } \right),
\end{equation}

and if $\lambda_f(p^2) \sim -1$ then $\lambda_f(p)^2 - 1 \sim -1$, so that $\lambda_f(p) \sim 0$ for most $p \le k$ and the right hand side of (\ref{Mk}) should be small.  The precise bound Holowinsky and Soundararajan prove based on this argument is

\begin{equation*}
M_k(f) \ll (\log k)^{1/6} ( \log \log k )^{9/2} L( 1, \sym^2 f )^{1/2}.
\end{equation*}

This inequality may be used to show that if $\phi$ is a cusp form and $L( 1, \sym^2 f ) < ( \log k)^{-1/3 - \delta}$ for some $\delta > 0$ then $M_k(f)$, and hence $\langle \phi F_k, F_k \rangle$, is small.  However, if $L( 1, \sym^2 f ) > ( \log k)^{-1/3 - \delta} > ( \log k)^{-1/2 + \delta}$ then theorem \ref{So} shows that $\langle \phi F_k, F_k \rangle$ is small.  This shows how theorems \ref{So} and \ref{Ho} complement each other in the cusp form case, and a similar relationship holds between them in the incomplete Eisenstein case.

\subsection{Extension to a Number Field}
\label{holoextend}

We now describe the the steps that must be made to generalise the method of section \ref{Qproof} to a number field.  Soundararajan's approach is the easier of the two to extend, as one has the triple product formula of Ichino \cite{I} available to generalise Watson's formula, and Soundararajan's weak subconvexity theorem is sufficiently general to also be applicable to the central $L$ value which appears there.  The only technical difficulty is in making Ichino's formula sufficiently quantitative, which requires estimating certain Archimedean integrals.  The necessary computation at complex places was carried out in \cite{Ma2} using a result of Michel and Venkatesh appearing in \cite{MV}, while at real places it may be obtained by comparison with Watson's formula.  Applying weak subconvexity is then straightforward, with the only consideration being that Soundararajan's theorem is stated for $L$ functions over $\Q$ rather than a number field.  However, it is easy to show that our $L$ functions still satisfy the required hypotheses when viewed as Euler products over $\Q$.  These steps are carried out in section \ref{someproof}.

The modifications that must be made in the case of Holowinsky's method are more involved, and we shall now describe his method in more detail before illustrating how we have adapted it in the simple case of a real quadratic field.  Holowinsky's approach for $SL(2,\Z)$ is similar to calculating the integral of $|F_k|^2$ against a Poincare series in terms of shifted convolution sums.  For a Hecke-Maass form or incomplete Eisenstein series $\phi$, he defines a regularised unfolding of $\langle \phi F_k, F_k \rangle$ in terms of a fixed positive $g \in C^\infty_0( \R^+)$ and a slowly growing parameter $T$ by

\begin{equation}
\label{HIphi}
I_\phi(T) =  \int_{\Gamma_\infty \backslash \HH^2} g(Ty) \phi(z) |F_k(z)|^2 d\mu.
\end{equation}

This behaves like the integral of $\phi |F_k|^2$ over $T$ copies of a fundamental domain for $SL(2,\Z)$, which may be seen by taking the Mellin transform $G$ of $g$ and expressing (\ref{HIphi}) in terms of Eisenstein series as

\begin{equation*}
I_\phi(T) =  \frac{1}{2\pi i} \int_{(\sigma)} G(-s) T^s \int_{Y} E(s,z) \phi(z) |F_k(z)|^2 d\mu.
\end{equation*}

Shifting the contour to $\sigma = 1/2$ then gives

\begin{eqnarray*}
I_\phi(T) & = &  cT \langle \phi F_k, F_k \rangle + O(T^{1/2}), \\
\text{where} \quad c & = & \frac{3}{\pi} \langle E(g|z), 1 \rangle.
\end{eqnarray*}

Holowinsky then calculates $I_\phi(T)$ in a second way using the Fourier expansions of $\phi$ and $f$,

\begin{eqnarray*}
\phi(z) & = & \sum_l a_l(y) \exp( 2\pi i l x), \\
f(z) & = & \sum_{n \ge 1} a_f(n) \exp( 2 \pi i n z).
\end{eqnarray*}

Only those $l$ with $|l| \ll T^{1+\epsilon}$ make a significant contribution, and for those $l$ Holowinsky considers

\begin{eqnarray}
\notag
S_l(T) & = & \int_{\Gamma_\infty \backslash \HH^2} g(Ty) a_l(y) \exp( 2\pi l x) |F_k(z)|^2 d\mu \\
\label{HSl}
& \ll & |a_l(T^{-1}) | \sum_{n \ge 1} |a_f(n) a_f(n+l) | \left( \int_0^\infty g(Ty) y^{k-2} e^{ -2\pi (2n+l) y } dy \right)
\end{eqnarray}

so that

\begin{equation*}
I_\phi(T) = \sum_{|l| \ll T^{1+\epsilon} } S_l(T) + O(T^{1/2}).
\end{equation*}

When $l \neq 0$, the regularising factor $g(Ty)$ effectively truncates the sum in (\ref{HSl}) to $n \ll Tk$, and we end up with an upper bound for $S_l(T)$ of

\begin{equation*}
S_l(T) \ll \frac{ |a_l(T^{-1}) }{ k L(1, \sym^2 f) } \sum_{n \le Tk} | \lambda_f(n) \lambda_f(n+l) |.
\end{equation*}

The expected main term $\tfrac{3}{\pi} \langle \phi, 1 \rangle$ appears in $S_0(T)$, and so to prove that $\tfrac{3}{\pi} \langle \phi, 1 \rangle$ and $\langle \phi F_k, F_k \rangle$ are close one needs to bound the off diagonal terms $S_l(T)$ and hence $\sum_{n \le x} |\lambda_f(n) \lambda_f(n+l)|$.  Having given up additive cancellation in this sum, Holowinsky instead proceeds by using the ideas discussed in section \ref{Qproof} to show that $|\lambda_f(n) \lambda_f(n+l)|$ is small on average.

We have extended this method to work over an arbitrary number field $F$, with the key innovation being the way the unfolding is carried out in the presence of units.  For simplicity, we will briefly describe the method in the case of a real quadratic field $F = \Q( \sqrt{d} )$, and $f$ a holomorphic Hecke modular form of parallel weight $(k,k)$ with associated automorphic representation $\pi$. Let $\phi$ be a Hecke-Maass cusp form, and write the Fourier expansions of $f$ and $\phi$ as

\begin{eqnarray*}
f(z) & = & \sum_{\eta > 0} a_f(\eta) \exp( 2\pi i \tr( \eta \kappa z) ), \\
\phi(z) & = & \sum_{\xi \neq 0} a_\xi(y) \exp( 2\pi i \tr( \eta \kappa x) ).
\end{eqnarray*}

The totally positive units $\OO_+^\times$ of $\OO$ act on the terms of these expansions, and when unfolding we must do so in a way which breaks this symmetry so that the resulting shifted convolution sums are over well rounded sets in $\OO$.  The correct approach is to unfold to $\Gamma_U \backslash \HH^2 \times \HH^2 \simeq \R_+^2 \times ( \R^2 / \OO)$ and localise in a set of the form $B_T \times ( \R^2 / \OO)$, where $B$ is a ball in $\R_+^2$ and we multiply it by $T^{-1}$ in each co-ordinate to get $B_T$.  This lets us largely ignore the units, and when we form the analogues of $S_l(T)$ it will allow us to truncate the resulting shifted convolution sum over $\OO$ at each place seperately.  We therefore define $I_\phi(T)$ as the integral

\begin{equation}
\label{Iphisimple}
I_\phi(T) = \int_{ \Gamma_U \backslash \HH^2 \times \HH^2} g(Ty) \phi(z) |F_k(z)|^2 dv,
\end{equation}

where now we let $h \in C^\infty_0(\R^+)$ be a positive function and $g \in C^\infty_0(\R_+^2)$ be its square.  We extract a main term $c T^2 \langle \phi F_k, F_k \rangle$ from this as before, by forming the symmetrised function

\begin{equation*}
\widetilde{g}(y) = \sum_{u \in \OO_+^\times} g(uy)
\end{equation*}

and expanding it in the multiplicative characters of $\R_+^2 / \OO_+^\times$ to express $I_\phi(T)$ in terms of integrals against Eisenstein series.  When we calculate $I_\phi(T)$ in terms of the Fourier expansion of $\phi$ it may again be shown that only those $\xi$ with $\| \xi \| \ll T^{1+\epsilon}$ contribute, and for these we define

\begin{equation*}
S_\xi(T) = \int_{ \Gamma_U \backslash \HH^2 \times \HH^2} g(Ty) a_\xi(y) \exp( 2 \pi i \tr( \xi \kappa x) ) |F_k(z)|^2 dv.
\end{equation*}

The analogue of the upper bound on $S_\xi(T)$ for $\xi \neq 0$ in terms of shifted convolutions sums is

\begin{equation*}
S_\xi(T) \ll |a_\xi(T^{-1})| \sum_{\eta > 0} |a_f(\eta) a_f(\eta + \xi)| \int_{\R_+^2} g(Ty) y^{k-2} \exp( 2 \pi \tr( ( 2 \eta + \xi) \kappa y) ) dy.
\end{equation*}

The key feature of the integral appearing here is that it factorises over the places of $\Q( \sqrt{d} )$, and each factor depends only on the image of $2\eta + \xi$ at that place, which lets us truncate the sum to the ball of radius $k$ in $\OO$ and leaves us with bounding $\sum_{0 < \eta < k} | \lambda_f(\eta) \lambda_f(\eta + \xi)|$.  This round set is well suited to the application of the large sieve for lattices in $\R^n$, and we may carry out Holowinsky's sieving approach as before by translating congruences modulo primes $\p$ of $\OO$ to sieve conditions in $\OO / p\OO$ without significant interference from the units.

We carry this method out in detail in sections \ref{sievereal} to \ref{holosieve}.  The proof splits into two parts, the first of which is reducing bounds on $\langle \phi F_k, F_k \rangle$ to ones on shifted convolution sums, and the second of which is bounding these sums using the large sieve.  The bulk of the work lies in the first step, and we have divided it into the case of totally real fields, carried out in section \ref{sievereal}, and the modifications which are needed in the presence of complex places which are described in section \ref{sievemixed}.  The application of the large sieve is carried out in section \ref{holosieve}.

\section{Sieving for Mass Equidistribution: The Totally Real Case}
\label{sievereal}

In this section we prove proposition \ref{Holo1} below, which reduces the problem of bounding $\langle \phi F_k, F_k \rangle$ to one of bounding shifted convolution sums.  We shall assume for simplicity in this section that $F$ is totally real, so that the key modifications in the unfolding argument can be seen more clearly, and leave the treatment of complex places for section \ref{sievemixed}.  We will work with holomorphic forms rather than vector valued ones, and so let $f$ be a $L^2$ normalised holomorphic Hecke eigenform of weight $k$ with associated automorphic representation $\pi$.  We assume there exists $\nu > 0$ such that $k_i \ge \| k \|^\nu$ for all $i$.

\begin{prop}
\label{Holo1}
Let $T \ge 1$ and $\epsilon > 0$.  Fix $h \in C^\infty_0(\R^+)$ positive and let $g \in C^\infty_0 ( \F^+)$ be its n-fold product, and define $C_g = \langle E(g | z), 1 \rangle / Vol(Y)$.  Fix an automorphic form $\phi$ with Fourier expansion

\begin{equation*}
\phi(z) = \sum_{ \xi \in \OO} a_\xi(y) e( \tr( \xi \kappa x ) ).
\end{equation*}

If $\phi$ is a Hecke-Maass cusp form, then

\begin{equation}
\label{Maassint}
\langle \phi F_k, F_k \rangle = C_g^{-1}T^{-n} \sum_{0 < \| \xi \| < T^{1+\epsilon} } S_\xi(T) + O(T^{-n/2}).
\end{equation}

If $\phi$ is a pure incomplete Eisenstein series, then

\begin{equation}
\label{eisint}
\langle \phi F_k, F_k \rangle = \frac{1}{ Vol(Y) } \langle \phi, 1 \rangle + C_g^{-1}T^{-n} \sum_{0 < \| \xi \| < T^{1+\epsilon} } S_\xi(T) + O\left( \frac{1 + R_k(f)}{T^{n/2}} \right)
\end{equation}

with

\begin{equation}
\label{Rbound}
R_k(f) = \frac{1}{\sqrt{Nk} L( 1, \sym^2 \pi ) } \sum_m \int_{-\infty}^{+\infty} \frac{ |L( 1/2+it, \sym^2 \pi \otimes \lambda_{-m} )| }{ (|t| + \| m \| + 1)^A } dt.
\end{equation}

Furthermore, we have the bound

\begin{multline}
\label{unfold}
S_\xi(T) \ll \frac{ | a_\xi(T^{-1}) |  }{ Nk L( 1, \sym^2 \pi ) } \biggl( \sum_{ \eta > 0} | \lambda_f(\eta) \lambda_f( \eta + \xi ) | \prod_{i=1}^n h \left( \frac{ T(k_i-1) }{ 4\pi (\eta_i + \xi_i/2) } \right) \\
+ O( Nk \| k \|^{-\nu +\epsilon} T^{n+\epsilon} ) \biggr).
\end{multline}

\end{prop}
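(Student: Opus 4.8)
The plan is to carry out the regularised-unfolding argument sketched in Section \ref{holoextend}, working over the full totally real field and keeping careful track of the role of the units. We begin by defining
\begin{equation*}
I_\phi(T) = \int_{\Gamma_U \backslash \HH_F} g(Ty)\, \phi(z)\, |F_k(z)|^2\, dv,
\end{equation*}
where $g$ is the $n$-fold product of $h$, and compute it in two ways. For the first computation, we symmetrise $g$ over $\OO_+^\times$ to form $\widetilde g(y) = \sum_{u \in \OO_+^\times} g(uy)$; writing the Mellin/Fourier expansion of $\widetilde g$ in terms of the characters $Ny^s \lambda_m(y)$ from Section \ref{weightprelims2}, we re-express $I_\phi(T)$ as a sum of contour integrals against the Eisenstein series $E(z,s,m)$, then shift contours to $\Real(s) = 1/2$. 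The residue at $s=1$ in the $m=0$ term supplies the main term $C_g T^n \langle \phi F_k, F_k\rangle$ (and, when $\phi$ is a pure incomplete Eisenstein series, also the additional constant term $\frac{1}{Vol(Y)}\langle\phi,1\rangle$ coming from the constant function component of $\phi$), and the remaining integral along $\Real(s) = 1/2$ gives an error of size $O(T^{n/2})$ in the cuspidal case and $O(T^{n/2}(1+R_k(f)))$ in the Eisenstein case, with $R_k(f)$ as in \eqref{Rbound}; here we feed in the standard convexity/weak-subconvexity bounds for $L$-values of $E(z,s,m)$ restricted to the critical line against $|F_k|^2$, which is exactly where the twisted symmetric-square $L$-functions $L(1/2+it, \sym^2\pi\otimes\lambda_{-m})$ enter.

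For the second computation, we insert the Fourier expansions of $\phi$ and of $|F_k|^2$ and integrate over the compact nilmanifold $\R^n/\OO$ in the $x$-variable, which picks out the diagonal Fourier modes. Using $F_k(z) = \sum_{\eta>0} a_f(\eta) {\bf K}_k(\eta\kappa y) e(\tr(\eta\kappa x))$ (in the totally real case ${\bf K}_k(y) = y^{k/2}e^{-2\pi y}$ by \eqref{whittaker1}), the $x$-integral forces the frequency relation $\eta' - \eta = \xi$, so that the $\xi$-th piece becomes
\begin{equation*}
S_\xi(T) = \int_{\R_+^n} g(Ty)\, a_\xi(y)\, \Big( \sum_{\eta,\,\eta+\xi > 0} a_f(\eta)\overline{a_f(\eta+\xi)}\, (Ny)^{k} e^{-2\pi \tr((2\eta+\xi)\kappa y)} \Big) \frac{dy}{Ny^2}.
\end{equation*}
A standard Mellin/stationary-phase estimate on the $y$-integral, together with the observation that $a_\xi(y)$ decays rapidly once $\| \xi \| y$ is large, shows that only $\| \xi \| \ll T^{1+\epsilon}$ contribute, giving \eqref{Maassint} and \eqref{eisint}. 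The crucial point — and this is the reason for unfolding over $\Gamma_U$ rather than $\Gamma_\infty$ — is that the $y$-integral factorises as a product over the real places, and each factor $\int_0^\infty h(Ty_i) y_i^{k_i} e^{-2\pi(2\eta_i + \xi_i)\kappa_i y_i}\, y_i^{-2}\, dy_i$ is essentially $h$ evaluated near its peak, so it is $\ll 1$ and is negligibly small unless $2\eta_i + \xi_i \ll (k_i-1)/(4\pi T \kappa_i)$ at every place $i$. This is what produces the weight functions $h(T(k_i-1)/(4\pi(\eta_i+\xi_i/2)))$ in \eqref{unfold} and truncates the $\eta$-sum to a well-rounded box rather than a thin region. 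Substituting $a_f(\eta) = \lambda_\pi(\eta) N\eta^{-1/2} a_f(1)$ and $|a_f(1)|^2 \asymp Nk^{\cdot}/(|D| L(1,\sym^2\pi))$ from \eqref{firstfourier}, the normalising constants combine to give the prefactor $|a_\xi(T^{-1})|/(Nk\, L(1,\sym^2\pi))$, and the terms with $\eta = 0$ or where some $\eta_i$ is small contribute the error term $O(Nk\,\|k\|^{-\nu+\epsilon}T^{n+\epsilon})$ — this is precisely where the hypothesis $k_i \ge \|k\|^\nu$ is used, to guarantee that the box over which we sum is not distorted near the coordinate hyperplanes.

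The main obstacle is the bookkeeping in the two-way computation of $I_\phi(T)$: making the unfolding over $\Gamma_U$ rigorous in the presence of units requires carefully identifying the quotient $\Gamma_U\backslash\HH_F$ and verifying that the symmetrisation $\widetilde g$ correctly reconstitutes the Eisenstein series $E(g\mid z)$, and then that shifting contours is justified (absolute convergence of the Fourier/Mellin expansion of $\widetilde g$, analytic continuation of $E(z,s,m)$, and polynomial growth on vertical lines). A secondary difficulty is extracting the archimedean integral estimates uniformly in the weight $k$ — in particular showing that the $y$-integral really does localise $\eta$ to the stated box with an error controlled by $\|k\|^{-\nu}$ — which is a routine but delicate application of Mellin transforms and the rapid decay of $h$; I would defer the explicit computations to the appendix referenced as Section \ref{appendix}.
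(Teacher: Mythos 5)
Your overall scheme — defining the regularised integral $I_\phi(T)$ over $\Gamma_U\backslash\HH_F$, computing it once by symmetrising $g$ and shifting contours and once by inserting the Fourier expansions of $\phi$ and $F_k$ — is the same as the paper's. However, your account misplaces where the terms $\frac{1}{Vol(Y)}\langle\phi,1\rangle$ and $R_k(f)$ arise, and this is not just a bookkeeping detail.

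In the paper, the contour-shift computation gives cleanly
\begin{equation*}
I_\phi(T) = C_g\,\langle\phi F_k, F_k\rangle\,T^n + O(T^{n/2})
\end{equation*}
for both cusp forms and pure incomplete Eisenstein series $\phi$, with the $O(T^{n/2})$ coming only from the rapid decay of $\phi(z)|F_k(z)|^2$ near the cusp; no $L$-value estimate is used there. The constant term $\frac{1}{Vol(Y)}\langle\phi,1\rangle$ and the quantity $R_k(f)$ appear instead in the \emph{second} computation: $S_0(T)=\bigl(\frac{\langle\phi,1\rangle}{Vol(Y)}+O(T^{-n/2})\bigr)I_1(T)$, and when $I_1(T)$ is itself evaluated by Rankin--Selberg (forming the symmetric square $L$-function from the diagonal Fourier coefficients of $|F_k|^2$) and the contour is shifted, one gets $I_1(T)=C_gT^n + E_{1/2}(T)$ with $E_{1/2}(T)\ll T^{n/2}R_k(f)$. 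That is where $L(1/2+it,\sym^2\pi\otimes\lambda_{-m})$ enters. Your version claims it enters via bounding $\int_Y E(1/2+it,m,z)\phi(z)|F_k|^2\,dv$ on the critical line; but that triple integral (with $\phi$ still in the integrand) does \emph{not} reduce to a symmetric-square $L$-value, and if you wanted to argue this way you would have a genuine extra shifted-convolution problem to solve. So as written your first computation cannot produce the claimed error, and your second computation has no mechanism to produce $R_k(f)$ at all.

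Two smaller corrections. First, the $y$-localisation has the $T$-dependence reversed: the weight $h\bigl(T(k_i-1)/(4\pi(\eta_i+\xi_i/2))\bigr)$ has compact support in $\R^+$, so it forces $\eta_i+\xi_i/2\asymp T(k_i-1)$, i.e.\ $\eta_i$ grows with $T$, not $\eta_i\ll k_i/T$ as you wrote. Second, the error term $O(Nk\,\|k\|^{-\nu+\epsilon}T^{n+\epsilon})$ in \eqref{unfold} does not come from ``$\eta=0$ or small $\eta_i$'' but from the gamma-factor approximation error in the Luo--Sarnak lemma \eqref{luosarnak} applied at each archimedean place; the hypothesis $k_i\ge\|k\|^\nu$ is needed to ensure that whenever an error factor replaces a main factor at some place, the resulting loss (controlled by $Nk/k_i \le Nk\|k\|^{-\nu}$) is small.
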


The bound we shall apply to the shifted convolution sums appearing in proposition \ref{Holo1} is given below; it will be proven in section \ref{holosieve} following Holowinsky, although it should be noted that this result may also be derived from the works of Nair \cite{Na} and Nair-Tenenbaum \cite{NT}.

\begin{prop}
\label{shiftbound}

Let $\lambda_1$ and $\lambda_2$ be multiplicative functions on $\OO^+$ satisfying $| \lambda_i(\eta) | \le \tau_m(\eta)$ for some $m$.  For any $x = (x_i)$ sufficiently large with respect to $\epsilon$ and satisfying $x_i \ge \| x \|^{\nu}$, and any fixed $\xi$ satisfying $0 < \| \xi \| \le \| x \|^\nu$ we have

\begin{equation}
\sum_{0 < \eta < x} | \lambda_1(\eta) \lambda_2( \eta + \xi ) | \ll \frac{ \tau(\xi) Nx}{ (\log |x|)^{2-\epsilon}} \prod_{N\p \le z} \left( 1 + \frac{| \lambda_1(\p)| + | \lambda_2(\p)|}{N\p} \right).
\end{equation}

\end{prop}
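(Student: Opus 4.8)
The plan is to follow Holowinsky's sieve argument, adapted from $\Z$ to the ring of integers $\OO$ of a totally real field, working throughout with a fixed torsion-free finite-index subgroup so that the lattice structure of $\OO$ in $\F \cong \R^n$ is available. First I would dispose of the diagonal-type contribution: for $\eta$ in the box $0 < \eta < x$ with either $\eta$ or $\eta+\xi$ supported only on small primes, one applies the bound $|\lambda_i(\eta)| \le \tau_m(\eta)$ together with the standard fact that the number of $\eta$ below $x$ all of whose prime ideal factors have norm $\le z$ is $O(Nx/(\log|x|)^{100})$ for a suitable choice of the sieving bound $z$ (say $z = \|x\|^{1/\log\log\|x\|}$ or similar). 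This handles the ``few prime factors'' range; the hypothesis $x_i \ge \|x\|^\nu$ guarantees the box is well-rounded, so the lattice-point counts needed here differ from volumes by acceptable error terms.

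The heart of the argument is the ``many prime factors'' range, where I would split $\sum_{N\p\le z}$ according to which $\p$ divide $\eta$ and which divide $\eta+\xi$. For $\eta$ (resp. $\eta+\xi$) with a prescribed small prime factor $\p$, write $\eta = \p\eta'$ and bound $|\lambda_1(\eta)| \le \tau_m(\p)|\lambda_1(\eta')| \le m\,\tau_m(\eta')$ using multiplicativity; this converts the sum over $\eta$ into a sum over $\eta'$ in a box shrunk by the norm $N\p$, with an extra congruence condition $\eta' \equiv -\xi/\p \pmod{\text{(something)}}$ coming from $\eta+\xi$. Here is where the factorisation of $\xi$ matters and where $\tau(\xi)$ enters: the number of residue classes in $\OO/\q\OO$ that must be sieved out, for a prime $\q$ dividing neither $\eta$ nor $\eta+\xi$, depends on $\gcd(\q,\xi)$, and summing the resulting local densities produces the Euler product $\prod_{N\p\le z}(1 + (|\lambda_1(\p)|+|\lambda_2(\p)|)/N\p)$ after the $\tau(\xi)$ factor accounts for the divisors of $\xi$. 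The condition $\|\xi\| \le \|x\|^\nu$ ensures $\xi$ is small relative to the box so that the sieve dimension stays bounded and the congruences are genuinely restrictive.

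To make this rigorous I would invoke a large sieve inequality for lattices in $\R^n$ (the multidimensional analogue of the classical large sieve), which bounds the number of $\eta$ in a well-rounded box avoiding prescribed residue classes modulo a family of prime ideals; combined with the Selberg-type upper bound sieve this yields, for each choice of the ``small prime divisors'' of $\eta$ and of $\eta+\xi$, a contribution of size roughly $Nx$ divided by $(\log|x|)^2$ times the appropriate partial Euler products, and one then sums over all such choices. Finally I would assemble the pieces, absorbing $\log\log$ losses into the $(\log|x|)^{-2+\epsilon}$ and checking that the choice of $z$ makes the small-prime-only contribution negligible. The main obstacle, as in Holowinsky's original argument, is bookkeeping the interaction between the multiplicative structure (the sum over which $\p$ divide $\eta$ versus $\eta+\xi$) and the sieve estimates uniformly in $\xi$, while keeping the dependence on $\xi$ no worse than $\tau(\xi)$; the passage from $\Z$ to $\OO$ adds the technical burden of replacing interval-counting by lattice-point counting in well-rounded boxes, which is exactly why the uniform growth hypothesis $x_i \ge \|x\|^\nu$ has been imposed.
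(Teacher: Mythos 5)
Your overall plan is the right one and matches the paper's: split the sum according to the smoothness structure of $\eta$ and $\eta+\xi$, dispatch one piece with Rankin's method, and handle the main piece with a large sieve for lattices in $\R^n$, with $\tau(\xi)$ emerging from the divisors of $\xi$ and the Euler product from the local sieving densities. The role you assign to the hypotheses $x_i \ge \|x\|^\nu$ (well-rounded box) and $\|\xi\| \le \|x\|^\nu$ is also correct. However, the decomposition you actually state does not work, and this is a genuine gap rather than an imprecision of exposition.

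You propose to set aside the case where ``$\eta$ or $\eta+\xi$ is supported only on small primes,'' i.e.\ is fully $z$-smooth, and treat the complement as the main range. This is the wrong split. In the complement, $\eta$ merely has \emph{some} prime factor of norm $>z$, but its $z$-smooth part $\mathfrak{a}$ (the product of all prime-power factors with $N\p \le z$) can still be arbitrarily large. The sieve you invoke needs the congruence modulus $\mathfrak{a}\mathfrak{a}_\xi$ to be small compared to the box (of norm at most a small power of $Nx$), so without control on $N\mathfrak{a}$ the large-sieve step simply does not apply. The correct decomposition, which the paper uses, is by the \emph{size of the smooth part}: write $(\eta) = \mathfrak{a}\mathfrak{b}$ with $\mathfrak{a}$ the $z$-smooth part, set a threshold $y = \|x\|^\epsilon$, and split according to whether $N\mathfrak{a}$ or $N\mathfrak{a}_\xi$ exceeds $y$. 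The piece with large smooth part is then rare by Rankin's method (the count of $z$-smooth integers up to $t$ is $O(t(\log t)^{-A})$, applied via partial summation over $N\mathfrak{a}>y$), and in the piece with both $N\mathfrak{a}, N\mathfrak{a}_\xi \le y$ the rough parts have at most $s \approx \epsilon\log\log\|x\|$ prime factors, so $|\lambda_i(\mathfrak{b})|$ is bounded by $(\log\|x\|)^{O(\epsilon)}$; the sieve then runs over residues mod $\mathfrak{a}\mathfrak{a}_\xi$ with norm $\le y^2$. Your labelling ``few prime factors'' also points to the confusion: fully $z$-smooth numbers can have \emph{many} prime factors, whereas it is the rough cofactors $\mathfrak{b},\mathfrak{b}_\xi$ (all prime factors $> z$) that have few, and that is exactly what makes $|\lambda_i|$ controllable on the main piece.

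A second, smaller issue: in the main range you describe extracting a single prime factor $\p$ and writing $\eta = \p\eta'$, applying multiplicativity one prime at a time. To make the bookkeeping close, one must extract the entire smooth part at once, fix $\mathfrak{a}$ and $\mathfrak{a}_\xi$ with $(\mathfrak{a},\mathfrak{a}_\xi)$ pulled out as a common divisor $\mathfrak{v}\mid\xi$ (which is where $\tau(\xi)$ arises), and only then sieve the sum over $\eta/v$ in the appropriate residue class mod $\mathfrak{a}\mathfrak{a}_\xi$. Your one-prime-at-a-time description does not obviously assemble into this, and it is not clear how you would otherwise avoid double-counting or control the modulus.
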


To deduce theorem \ref{Home} in the totally real case from propositions \ref{Holo1} and \ref{shiftbound}, first apply proposition \ref{shiftbound} with $\lambda_1 = \lambda_2 = \lambda_\pi$ and $x = Tk$ to obtain

\begin{equation}
\sum_{ \eta > 0} | \lambda_f(\eta) \lambda_f( \eta + \xi ) | \prod_{i=1}^n h \left( \frac{ T(k_i-1) }{ 4\pi (\eta_i + \xi_i/2) } \right) \ll \frac{ \tau(\xi) T^n Nk}{ (\log \| k \|)^{2-\epsilon}} \prod_{N\p \le \| k \| } \left( 1 + \frac{2 | \lambda_\pi (\p) | }{N\p} \right).
\end{equation}

In the case of $\phi$ a Maass form, we substitute this into (\ref{unfold}) and bound $|a_\xi(T^{-1})|$ by $|\rho(\xi)| T^{-n/2+\epsilon}$ using lemma \ref{coeffbound} from section \ref{holofourier} below.  As we shall choose $T$ so that it is bounded above by any positive power of $\| k \|$, (\ref{unfold}) then becomes

\begin{equation*}
S_\xi(T) \ll \frac{ |\rho(\xi)| \tau(\xi) T^{n/2 + \epsilon} }{ L(1, \sym^2 \pi) ( \log \| k \| )^{2-\epsilon} } \prod_{N\p \le \| k \| } \left( 1 + \frac{2 | \lambda_\pi (\p) | }{N\p} \right) + O( \| k \|^{-\nu + \epsilon} ).
\end{equation*}

Substituting this into (\ref{Maassint}) and applying the Ramanujan bound on average (\ref{avramanujan}), we obtain 

\begin{equation*}
\langle \phi F_k, F_k \rangle \ll \frac{ T^{n/2} (T \log \| k \| )^\epsilon }{ (\log \| k \|)^2 L( 1, \sym^2 \pi ) } \prod_{N\p \le \| k \|} \left( 1 + \frac{2 | \lambda_\pi (\p) | }{N\p} \right) + O(T^{-n/2})
\end{equation*}

which gives (\ref{maassbound}) on choosing $T^n = M_k(\pi)^{-1}$.  The derivation of (\ref{eisbound}) in the pure incomplete Eisenstein series case is similar.

The organisation of this section is as follows.  In section \ref{holofourier} we prove some results we shall need on the Fourier coefficients of $\phi$ and $f$, and in section \ref{holounfold} we introduce the regularised unfolding integral which is the heart of our proof before using it to relate $\langle \phi F_k, F_k \rangle$ to shifted convolution sums in section \ref{holoshift}.

\subsection{Fourier Coefficient Calculations}
\label{holofourier}

In this section we present some bounds and normalisations we shall need for the Fourier coefficients of $\phi$ and $f$.  If $\phi$ is an automorphic form on $\Gamma \backslash (\HH^2)^n$, we may expand it in a Fourier series as

\begin{equation*}
\phi(z) = a_0(y) + \sum_{ \xi } a_\xi(y) e( \tr( \xi \kappa x ) )
\end{equation*}

with $a_0(y) = 0$ if $\phi$ is a cusp form.  If $\phi$ is a fixed Maass cusp form with spectral parameter $r = (r_i)$ then we have the expansion

\begin{equation*}
\phi(z) = \sqrt{Ny} \sum_{ \xi \neq 0 } \rho(\xi) \prod_{p = 1}^n K_{ir_p}(2\pi |\xi_p| \kappa_p y_p )  e( \tr( \xi \kappa x ) ),
\end{equation*}

where the $\rho(\xi)$ satisfy the Ramanujan bound on average, i.e.

\begin{equation}
\label{avramanujan}
\sum_{ \| \xi \| \le T } | \rho(\xi) | \ll T^n.
\end{equation}

If $\phi$ is a pure incomplete Eisenstein series $E( \psi, m | z)$, we may determine its Fourier coefficients in terms of the coefficients of the complete Eisenstein series $E(s, m, z)$.  The Fourier expansion of these series was calculated by Efrat \cite{Ef} to be

\begin{multline*}
E(s, m, z) = Ny^s \lambda_m(y) + \phi(s,m) Ny^{1-s} \lambda_{-m}(y) + \frac{2^n \pi^{ns} }{ \sqrt{D} } Ny^{1/2} \times \\
\sum_{ \xi \neq 0 } N(\xi \kappa)^{s-1/2} \lambda_m(\xi \kappa) \prod_{p = 1}^n \frac{ K_{ s + \beta(m,p) - 1/2 } ( 2\pi |\xi_p| \kappa_p y_p ) }{ \Gamma(s + \beta(m,p)) } \frac{ \sigma_{1-2s, -2m}(\xi \kappa) } {\zeta(2s,\lambda_{-2m} )} e( \tr(\xi \kappa x)),
\end{multline*}

where $\beta(m,p)$ is as in (\ref{beta}) and

\begin{eqnarray*}
\phi(s) & = & \frac{ \pi^{n/2} }{ \sqrt{D} } \prod_{p = 1}^n \frac{ \Gamma( s + \beta(m,p) -1/2) }{ \Gamma( s + \beta(m,p) ) } \frac{ \zeta( 2s-1, \lambda_{-2m} ) }{ \zeta(2s,\lambda_{-2m} ) } = \frac{ \theta(s-1/2) }{ \theta(s) }, \\
\theta(s) & = & \pi^{-ns} D^s \prod_{p = 1}^n \Gamma( s + \beta(m,p) ) \zeta(2s,\lambda_{-2m} ), \\
\sigma_{1-2s,-2m}(\xi \kappa) & = & \sum_{\substack{ ( c ) \\ \xi/c \in \OO } } \frac{ \lambda_{-2m}(c) }{ |Nc|^{2s-1} }.
\end{eqnarray*}

We have

\begin{equation*}
E(\psi, m | z) = \frac{1}{2\pi i} \int_{(2)} \Psi(-s) E(s,m,z) ds,
\end{equation*}

where $\Psi$ is the Mellin transform of $\psi$.  From this formula we may calculate the Fourier coefficients of $E(\psi, m | z)$, obtaining the expression

\begin{eqnarray*}
a_0(y) & = & \frac{1}{2\pi i} \int_{(2)} \Psi(-s)( Ny^s \lambda_m(y) + \phi(s,m) Ny^{1-s} \lambda_{-m}(y) ) ds \\
& = & \psi( Ny ) \lambda_m(y) + O( Ny^{-1} ).
\end{eqnarray*}

Moving the line of integration to $\sigma = 1/2$ we obtain

\begin{equation*}
a_0(y) = \frac{1}{ \text{Vol}(Y) } \langle E(\psi, m | z), 1 \rangle + O( Ny^{1/2} ),
\end{equation*}

where the main term is only nonzero for $m=0$.  Doing the same for nonzero $\xi$ we obtain

\begin{multline*}
a_\xi(y) = \frac{ 2^n \pi^{n/2} Ny^{1/2} }{2\pi i \sqrt{D} } \int_{-\infty}^\infty \pi^{nit} \Psi( -1/2 - it) 
N(\xi \kappa)^{it} \lambda_m(\xi \kappa) \\
\prod_{p=1}^n \frac{ K_{ it + \beta(m,p) } ( 2\pi |\xi_p| \kappa_p y_p ) }{ \Gamma(1/2 + it + \beta(m,p)) } \frac{ \sigma_{-2it, -2m}(\xi \kappa)}{\zeta(2s, \lambda_{-2m} )} dt.
\end{multline*}

We may apply the bound

\begin{equation*}
K_{ir}(y) \ll | \Gamma( 1/2 + ir) | \left( \frac{ 1 + |r|}{ y } \right)^A \left( 1 + \frac{1+|r|}{y} \right)^\epsilon,
\end{equation*}

valid for any integer $A \ge 0$ and $\epsilon > 0$, to this to obtain

\begin{equation*}
a_\xi(y) \ll \tau(\xi) Ny^{1/2} \| \xi y \|^{-A} \prod_{i=1}^n \left( 1 + \frac{1}{\xi_i y_i} \right)^\epsilon
\end{equation*}

Similar bounds are valid for $\phi$ a Maass cusp form.  The bounds for both varieties of form are summarised in the following lemma:

\begin{lemma}
\label{coeffbound}
Let $\phi$ be an automorphic form on $Y$ with Fourier series expansion 

\begin{equation*}
\phi(z) = a_0(y) + \sum_{ \xi \neq 0 } a_\xi(y) e( \tr( \xi \kappa x ) ).
\end{equation*}

If $\phi$ is a Maass cusp form, then $a_0(y) = 0$ and for $\xi \neq 0$ we have

\begin{equation*}
a_\xi(y) \ll |\rho(\xi)| Ny^{1/2} \| \xi y \|^{-A} \prod_{i=1}^n \left( 1 + \frac{1}{ \xi_i y_i } \right)^\epsilon
\end{equation*}

for any integer $A \ge 0$ and any $\epsilon > 0$.  If $\phi$ is a pure incomplete Eisenstein series, then

\begin{equation*}
a_0(y) = \frac{1}{ \text{Vol}(Y) } \langle \phi, 1 \rangle + O( Ny^{1/2} )
\end{equation*}

and for $\xi \neq 0$ we have

\begin{equation*}
a_\xi(y) \ll \tau(\xi) Ny^{1/2} \| \xi y \|^{-A} \prod_{i=1}^n \left( 1 + \frac{1}{ \xi_i y_i } \right)^\epsilon
\end{equation*}

for any integer $A \ge 0$ and any $\epsilon > 0$.

\end{lemma}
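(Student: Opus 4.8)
The plan is to read both bounds off the explicit Fourier expansions recorded just above the lemma; the content is entirely in estimating the Bessel functions and, for Eisenstein series, one contour shift.

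For a Maass cusp form $\phi$ cuspidality gives $a_0=0$, and the displayed expansion gives $a_\xi(y)=\sqrt{Ny}\,\rho(\xi)\prod_{p=1}^n K_{ir_p}(2\pi|\xi_p|\kappa_p y_p)$. The spectral parameter $r=(r_p)$ and the generator $\kappa$ are fixed, so the Bessel estimate quoted in the text has implied constant depending only on $\phi$, $A$ and $\epsilon$. I would apply the full power decay $K_{ir_p}(t)\ll t^{-A}(1+t^{-1})^\epsilon$ only to the index $p$ realising $\max_p|\xi_p y_p|=\|\xi y\|$, and to every other factor use $K_{ir_p}(t)\ll(1+t^{-1})^\epsilon$, valid for all $t>0$ since $K_{ir}$ decays exponentially at infinity and grows at worst polynomially at $0$. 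Combining these, and using $\|\xi y\|^{-1}\le(\xi_i y_i)^{-1}$ for each $i$, yields $a_\xi(y)\ll|\rho(\xi)|Ny^{1/2}\|\xi y\|^{-A}\prod_i(1+1/(\xi_i y_i))^\epsilon$. The on-average Ramanujan bound (\ref{avramanujan}) is quoted as input, not reproven.

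For a pure incomplete Eisenstein series $\phi=E(\psi,m\,|\,z)$ I would start from $E(\psi,m\,|\,z)=\frac1{2\pi i}\int_{(2)}\Psi(-s)E(s,m,z)\,ds$ together with Efrat's Fourier expansion of $E(s,m,z)$ and integrate term by term. For the zeroth coefficient one keeps $Ny^s\lambda_m(y)+\phi(s,m)Ny^{1-s}\lambda_{-m}(y)$ and shifts the contour from $\Real s=2$ to $\Real s=1/2$; the only pole crossed is the simple pole at $s=1$, present precisely when $m=0$ since it comes from the factor $\zeta(2s-1,\lambda_{-2m})$ in $\phi(s,m)$, and one identifies its contribution with $\text{Vol}(Y)^{-1}\langle E(\psi,m\,|\,z),1\rangle$ directly from the definition of the incomplete Eisenstein series (consistently, $\langle E(\psi,m\,|\,z),1\rangle=0$ for $m\ne0$). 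The remaining integral on $\Real s=1/2$ is $O(Ny^{1/2})$ because $\Psi$ decays faster than any polynomial along vertical lines while $\phi(1/2+it,m)$ grows only polynomially, once the Gamma quotient and zeta values are bounded on that line in the standard way. For $\xi\ne0$ the corresponding coefficient of $E(s,m,z)$ is holomorphic for $\Real s\ge1/2$, so one shifts straight to $\Real s=1/2$; there $|\lambda_m(\xi\kappa)|=1$ and $|\sigma_{-2it,-2m}(\xi\kappa)|\le\tau(\xi)$, the reciprocal Gamma factor $\Gamma(1/2+it+\beta(m,p))^{-1}$ in Efrat's formula exactly cancels the $|\Gamma(1/2+it+\beta(m,p))|$ thrown off by the Bessel estimate applied to $K_{it+\beta(m,p)}$ (recall $\beta(m,p)$ is purely imaginary, so that order is $i\times$ real), and the residual polynomial factor in $t$ is absorbed by the rapid decay of $\Psi(-1/2-it)$, so the $t$-integral converges to $O_{\phi,A,\epsilon}(1)$. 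Applying the power decay to the largest Bessel factor exactly as in the Maass case yields $a_\xi(y)\ll\tau(\xi)Ny^{1/2}\|\xi y\|^{-A}\prod_i(1+1/(\xi_i y_i))^\epsilon$.

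The main obstacle is the Eisenstein step: confirming that $E(s,m,z)$ has, in $\Real s>1/2$, only the simple pole at $s=1$ and only for $m=0$; verifying that its residue against $\Psi(-s)$ is exactly $\text{Vol}(Y)^{-1}\langle\phi,1\rangle$; and obtaining uniform $O(Ny^{1/2})$ control of the shifted integrals, which requires the usual mildly delicate bounds for $\zeta(2s-1,\lambda_{-2m})$, $\zeta(2s,\lambda_{-2m})$ and the Gamma quotient on $\Real s=1/2$ together with the decay of $\Psi$. Once the Bessel bound from the text is granted the rest is bookkeeping, and the Maass case is immediate.
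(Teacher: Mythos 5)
Your argument tracks the paper's own proof essentially line by line: both read the Maass bound off the $K$-Bessel expansion with the decay estimate applied asymmetrically across places, and both handle the incomplete Eisenstein series by the Mellin representation $E(\psi,m\,|\,z)=\frac{1}{2\pi i}\int_{(2)}\Psi(-s)E(s,m,z)\,ds$ together with Efrat's Fourier expansion, a shift to $\Real s=1/2$ for the zeroth coefficient (collecting the residue at $s=1$, nonzero only for $m=0$), and the same Bessel/Gamma-cancellation bookkeeping for $\xi\neq0$. The only point worth flagging is a small redundancy in your Maass step: once you apply the $t^{-A}$ decay at the place achieving $\|\xi y\|$ and the mild bound $(1+t^{-1})^\epsilon$ elsewhere, the stated conclusion follows directly, and the auxiliary inequality $\|\xi y\|^{-1}\le(\xi_i y_i)^{-1}$ is not needed.
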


As we shall work with the fourier expansion of $f$ rather than $F_k$, the $L^2$ normalisation of $a_f(1)$ differs slightly from the one given in secton \ref{weightprelims3}.  $f$ has the expansion

\begin{equation*}
f(z) = \sum_{\eta > 0} a_f(\eta) e( \tr( \eta \kappa z) )
\end{equation*}

with $a_f(\eta)$ satisfying

\begin{equation*}
a_f(\eta ) = \lambda_\pi(\eta) a_f( 1 ) \eta^{(k-1)/2},
\end{equation*}

and the bound $|\lambda_\pi(\xi)| \le \tau(\xi)$ is known by the work of Blasius \cite{Bl} and Deligne.  The correct normalisation of $a_f(1)$ so that $\langle F_k, F_k \rangle = 1$ is

\begin{equation}
\label{holonorm}
| a_f( 1 ) |^2 = \kappa^k \prod_{i=1}^n \frac{ (4\pi)^{k_i} }{ \Gamma(k_i) } \frac{ \pi^n /2 }{ D L(1, \sym^2 \pi ) }.
\end{equation}

\subsection{The Regularised Unfolding Integral $I_\phi(T)$}
\label{holounfold}

In this section we construct our main object $I_\phi(T)$.  By computing it asymptotically in two ways, by contour shift and then unfolding, we will obtain a link between inner products and shifted convolution sums which will prove proposition \ref{Holo1}.  Choose a positive function $h \in C^\infty_0(\R^+)$, and let $g \in C^\infty_0( \F^+ )$ be its $n$-fold product.  Define $C_g = \langle E(g|z), 1 \rangle / \text{Vol}(Y)$.  Let

\begin{equation*}
\widetilde{g} = \sum_{u \in \OO^\times_+} g( uy )
\end{equation*}

be the symmetrisation of $g$ under the action of $\OO^\times_+$, and let

\begin{equation}
\label{cuspmellin}
G(s,m) = \int_{\F^+} g(y) Ny^{s-1} \lambda_m(y) dy
\end{equation}

be the Mellin transform of $\widetilde{g}$ thought of as a function on $\F^+ / \OO^\times_+$.  If $\F_+^1$ denotes the multiplicative subgroup of norm 1 elements, we may use the formula of Efrat \cite{Ef} for the volume of $\F_+^1 / \OO_+^\times$ to invert this, obtaining

\begin{equation}
\label{gtilde}
\widetilde{g}(y) = \frac{1}{ 2^n \pi i R} \sum_m \int_{(\sigma)} G(-s,-m) Ny^s \lambda_m(y) ds.
\end{equation}

Let $T \ge 1$ and consider the integral

\begin{equation}
\label{Iphi}
I_\phi(T) = \int_{\F^+} g(Ty) Ny^{-2} \left( \int_{ \F / \OO } \phi(z) |F_k(z)|^2 dx \right) dy,
\end{equation}

which may be rewritten by substituting (\ref{gtilde}) and refolding the Eisenstein series as

\begin{equation}
\label{Iphi2}
I_\phi(T) = \frac{1}{ 2^n \pi i R} \sum_m \int_{(\sigma)} G(-s,-m)T^{ns} \int_Y E(s,m,z) \phi(z) |F_k(z)|^2 dv ds.
\end{equation}

We first use a contour shift to relate $I_\phi(T)$ to the inner product $\langle \phi F_k, F_k \rangle$.

\begin{lemma}
\label{Iphimain}
For $\phi$ a fixed Hecke-Maass cusp form or pure incomplete Eisenstein series we have

\begin{equation*}
I_\phi(T) = C_g \langle \phi F_k, F_k \rangle T^n + O( T^{n/2} ).
\end{equation*}

\end{lemma}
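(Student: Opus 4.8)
The plan is to evaluate $I_\phi(T)$ starting from the representation \eqref{Iphi2} as a sum over $m$ of contour integrals $\frac{1}{2^n \pi i R} \int_{(\sigma)} G(-s,-m) T^{ns} \langle E(s,m,\cdot) \phi F_k, F_k \rangle\, ds$, initially taken on a line $\sigma = 2$ where convergence is unproblematic, and then to shift the contour left to $\sigma = 1/2$. The analytic ingredients are: (i) the meromorphic continuation of $E(s,m,z)$, which for $m \neq 0$ is entire in the relevant strip, and for $m = 0$ has a single simple pole at $s = 1$ with constant residue $1/\mathrm{Vol}(Y)$; (ii) rapid decay of the Mellin transform $G(-s,-m)$ in vertical strips and in $m$, coming from the fact that $g$ is a fixed smooth compactly supported function on $\F^+$ and $\widetilde g$ is its $\OO_+^\times$-symmetrisation — this is what makes the sum over $m$ and the shifted integral converge absolutely; (iii) polynomial control of $\langle E(s,m,\cdot)\phi F_k, F_k\rangle$ on the line $\sigma = 1/2$, uniform in $T$, which is exactly where the $O(T^{n/2})$ error will come from since the shifted integrand carries $T^{ns}$ with $\Real s = 1/2$.

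First I would record the residue computation. Only the $m = 0$ term contributes a pole, at $s = 1$, where $E(s,0,z)$ has residue $1/\mathrm{Vol}(Y)$ (a constant function on $Y$); so the residue of the $m=0$ integrand at $s=1$ is $\frac{1}{2^n \pi i R} G(-1,0) T^{n} \cdot \frac{1}{\mathrm{Vol}(Y)} \langle \phi F_k, F_k\rangle$. I then need to identify the constant: by the definition \eqref{cuspmellin} of $G$ together with the unfolding/refolding that produced \eqref{Iphi2}, $G(-1,0)$ is (up to the normalising factor $2^n \pi i R$ and $\mathrm{Vol}(Y)$) precisely $\langle E(g|z), 1\rangle$, so that the whole residue equals $\frac{\langle E(g|z),1\rangle}{\mathrm{Vol}(Y)} \langle \phi F_k, F_k\rangle T^n = C_g \langle \phi F_k, F_k\rangle T^n$, matching the claimed main term. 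This bookkeeping — tracking the constant from the volume formula of Efrat for $\F_+^1/\OO_+^\times$ through \eqref{gtilde} and back — is the one place I would be careful, but it is routine.

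The remaining contribution is the shifted integral $\frac{1}{2^n\pi i R}\sum_m \int_{(1/2)} G(-s,-m) T^{ns} \langle E(s,m,\cdot)\phi F_k,F_k\rangle\,ds$. On $\Real s = 1/2$ we have $|T^{ns}| = T^{n/2}$, so it suffices to show that $\sum_m \int_{(1/2)} |G(-s,-m)| \cdot |\langle E(s,m,\cdot)\phi F_k, F_k\rangle|\, |ds| = O(1)$ uniformly in $T$ (and in $k$). The factor $G(-s,-m)$ decays faster than any polynomial in both $|\Im s|$ and $\|m\|$ by smoothness and compact support of $g$; the inner product $\langle E(1/2+it,m,\cdot)\phi F_k,F_k\rangle$ grows at most polynomially in $|t|$ and $\|m\|$ — for $\phi$ a fixed Maass form this is the standard bound for a triple integral against a unitary Eisenstein series (with the $F_k$ contribution absorbed since $\|F_k\|_2 = 1$), and for $\phi$ a pure incomplete Eisenstein series one first writes $\phi$ itself via its defining contour integral over complete Eisenstein series and reduces to the same estimate. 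So the rapid decay of $G$ beats the polynomial growth, the sum and integral converge, and the total is $O(T^{n/2})$. The main obstacle is establishing this uniform polynomial bound on $\langle E(s,m,\cdot)\phi F_k,F_k\rangle$ on the critical line with no dependence on the weight $k$; I expect to handle it by Cauchy–Schwarz together with the spectral expansion / standard bounds for $|E(1/2+it,m,z)|^2$ integrated against the probability measure $|F_k|^2\,dv$, exactly as in the $SL(2,\Z)$ case recalled in section \ref{Qproof}, and this is presumably also where the hypothesis that all $k_i$ tend to infinity uniformly is ultimately used downstream rather than here.
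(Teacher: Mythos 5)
Your overall strategy — start from the contour representation \eqref{Iphi2} on a line of large real part, shift to $\Real s = 1/2$, collect the residue at $s=1$ from the $m=0$ term, and show the shifted integral is $O(T^{n/2})$ — is exactly what the paper does, and your residue bookkeeping (deferred to the volume computation of section \ref{appvol}) is correct, modulo the fact that $\Res_{s=1} E(s,0,z)$ is not literally $1/\mathrm{Vol}(Y)$ but a ratio of normalisation constants that cancels in the end.

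The one place you make the argument slightly harder than necessary is the error term. You try to bound $\langle E(1/2+it,m,\cdot)\,\phi F_k, F_k\rangle$ directly, uniformly in $k$, and reach for Cauchy--Schwarz and a spectral expansion to control the growth of $E(1/2+it,m,\cdot)$; but since $E(1/2+it,m,\cdot)$ is not in $L^2(Y)$, Cauchy--Schwarz in that form is awkward. The paper instead performs the $s$-integral and $m$-sum \emph{first}, defining $p(z) = \frac{1}{2^n\pi i R}\sum_m\int_{(1/2)} G(-s,-m)T^{ns}E(s,m,z)\,ds$, and uses the explicit Fourier expansion of $E(s,m,z)$ and the $K$-Bessel bound to show $p(z)\ll\sqrt{Ny}\,T^{n/2}$ for $Ny\gg 1$, with the rapid decay of $G$ absorbing the polynomial growth in $|\Im s|$ and $\|m\|$. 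Since $\phi$ decays rapidly (cusp form) or is eventually zero (pure incomplete Eisenstein series) as $Ny\to\infty$, and $\|F_k\|_2 = 1$, integrating $p(z)\phi(z)|F_k(z)|^2$ over $Y$ immediately gives $R_\phi(T)\ll_{\phi,g} T^{n/2}$ uniformly in $k$. This is the same estimate you are after, arrived at after a Fubini swap, but it requires only a pointwise $L^\infty$ bound on the Eisenstein series rather than any spectral input — and in particular no appeal to the uniform-growth hypothesis on $k$, which, as you guessed, plays no role in this lemma.
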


\begin{proof}
Starting with equation (\ref{Iphi2}) and moving the contour of integration to the line $\text{Re}(s) = 1/2$, we write

\begin{equation*}
I_\phi(T) = C_g \langle \phi F_k, F_k \rangle T^n + R_\phi(T)
\end{equation*}

with $C_g$ coming from the pole of the Eisenstein series at $s=1$ (see section \ref{appvol} for this calculation).  $R_\phi(T)$ is the remaining integral along $\text{Re}(s) = 1/2$,

\begin{equation*}
R_\phi(T) = \int_Y p(z) \phi(z) |F_k(z)|^2 dv,
\end{equation*}

with

\begin{equation*}
p(z) = \frac{1}{ 2^n \pi i R} \sum_m \int_{(1/2)} G(-s,-m)T^{ns} E(s,m,z) ds.
\end{equation*}

From the Fourier series expansion of $E(s,m,z)$ and the bound for $K_{ir}$ we have 

\begin{equation*}
E(s,m,z) \ll Ny^{1/2} + Ny^{-n-1/2}(|s| + \| m \|)^{n+2} (1 + ( |s| + \| m \| )Ny^{-1/n} )^\epsilon,
\end{equation*}

so that $p(z) \ll \sqrt{Ny} T^{n/2}$ if $Ny \gg 1$.  It follows from this and the rapid decay of $\phi(z) |F_k(z)|^2$ that $R_\phi(T) \ll_{\phi,g} T^{n/2}$.

\end{proof}

Restating this with $I_\phi(T)$ expressed in the form (\ref{Iphi}) gives

\begin{equation}
\label{comparison}
C_g \langle \phi F_k, F_k \rangle T^n + O( T^{n/2} ) = \int_{\F^+} g(Ty) Ny^{-2} \left( \int_{ \F / \OO } \phi(z) |F_k(z)|^2 dx \right) dy,
\end{equation}

and we shall extract shifted convolution sums from the expression on the RHS after truncating our fixed form $\phi$.  Recall that this had a Fourier expansion

\begin{equation}
\label{fourier}
\phi(z) = \sum_{ \xi \in \OO} a_\xi(y) e( \tr( \xi \kappa x ) )
\end{equation}

with the $a_\xi(y)$ bounded as in lemma \ref{coeffbound}.  If $\phi$ is a pure incomplete Eisenstein series, then we find that the contribution to $I_\phi(T)$ from the tail of (\ref{fourier}) with $\| \xi \| \ge T^{1+\epsilon}$ for any $\epsilon > 0$ is bounded by

\begin{equation*}
I_1(T) T^{-n/2+A+\epsilon'} \sum_{ \| \xi \| \ge T^{1+\epsilon} } \tau(\xi) \| \xi \|^{-A + \epsilon'} \ll T^{3n/2 + \epsilon(n+1-A)}
\end{equation*}

by the support of $g$ and lemma \ref{coeffbound} (which is the source of the $\epsilon'$).  Here $I_1(T)$ is our main integral with $\phi$ chosen to be the constant function.  As a result, the contribution of these terms to $I_\phi(T)$ is $\ll T^{n/2}$ after choosing $A$ sufficiently large with respect to $\epsilon$, and a similar argument works when $\phi$ is a fixed cusp form.  If we define $\phi^*$ to be the truncated function

\begin{equation*}
\phi^* (z) = \sum_{ \| \xi \| < T^{1+\epsilon} } a_\xi(y) e( \tr( \xi \kappa x ) ),
\end{equation*}

we therefore have

\begin{equation}
\label{truncate}
C_g \langle \phi F_k, F_k \rangle T^n = \int_{\F^+} g(Ty) Ny^{-2} \left( \int_{ \F / \OO } \phi^*(z) |F_k(z)|^2 dx \right) dy + O( T^{n/2} ).
\end{equation}

\subsection{Extracting Shifted Convolution Sums}
\label{holoshift}

In this section we shall expand the RHS of (\ref{truncate}) using the Fourier expansion of $\phi^*$, writing

\begin{equation*}
C_g \langle \phi F_k, F_k \rangle T^n = S_0(T) + \sum_{0 < \| \xi \| < T^{1+\epsilon} } S_\xi(T) + O( T^{n/2} )
\end{equation*}

where for any $\xi \in \OO$ we define

\begin{equation*}
S_\xi(T) = \int_{\F^+} g(Ty) Ny^{-2} \left( \int_{ \F / \OO } a_\xi(y) e( \tr(\xi \kappa x ) ) |F_k(z)|^2 dx \right) dy.
\end{equation*}

Note that this definition gives us (\ref{Maassint}) of proposition \ref{Holo1}.  The aim of this section is to analyse the objects $S_\xi(T)$ so that when we divide through by $C_g T^n$ we have the remaining equations and bounds of proposition \ref{Holo1}.  We first note that $S_0(T) = 0$ for $\phi$ a cusp form and by lemma \ref{coeffbound} we have

\begin{equation}
\label{S0}
S_0(T) = \left( \frac{ \langle \phi, 1 \rangle }{ \text{Vol}(Y) } + O(T^{-n/2}) \right) I_1(T)
\end{equation}

for $\phi$ a pure incomplete Eisenstein series.  We shall treat $I_1(T)$ and $S_\xi(T)$ for $\xi \neq 0$ seperately, beginning with $\xi \neq 0$.  Squaring out $|F_k(z)|^2$ and integrating in $x$ gives

\begin{equation*}
S_\xi(T) = \sqrt{D} \sum_{ \eta > 0} \overline{ a_f(\eta) } a_f( \eta + \xi ) \left( \int_{\F^+} g(Ty) a_\xi(y) y^{k-2} e^{-2\pi \tr( (2\eta + \xi) \kappa y)} dy \right).
\end{equation*}

As the exponentials and $g$ are positive, this satisfies

\begin{equation*}
S_\xi(T) \ll | a_\xi(T^{-1}) |  \sum_{ \eta > 0} | a_f(\eta) a_f( \eta + \xi ) | \left( \int_{\F^+} g(Ty) y^{k-2} e^{-2\pi \tr( (2\eta + \xi) \kappa y)} dy \right).
\end{equation*}

Appealing to the Mellin transform $H$ of $h$ and applying the normalisations of $a_f(\eta)$ and $a_f(\eta + \xi)$, we may integrate in $y$ to obtain 

\begin{multline}
\label{unfold1}
S_\xi(T) \ll \frac{ | a_\xi(T^{-1}) | }{ Nk L( 1, \sym^2 \pi ) }  \sum_{ \eta > 0} | \lambda_\pi(\eta) \lambda_\pi( \eta + \xi ) | \\
\times \prod_{i=1}^n \left( \frac{ \sqrt{ \eta_i(\eta_i + \xi_i) } }{\eta_i + \xi_i/2} \right)^{k_i-1} \frac{1}{2\pi i} \int_{ (\sigma) } H(-s) \left( \frac{T}{4\pi \kappa_i ( \eta_i + \xi_i/2 ) } \right)^s \frac{ \Gamma(s+k_i-1) }{ \Gamma(k_i-1) } ds.
\end{multline}

Note that $\sqrt{ \eta_i (\eta_i + \xi_i) } \le \eta_i + \xi_i/2$, so that these factors may be omitted.  We may simplify this expression using a lemma seen in the work of Luo and Sarnak.  By \cite{LS}, we have

\begin{equation}
\label{luosarnak}
\frac{ \Gamma( s + k_i - 1 ) }{ \Gamma( k_i - 1) } = (k_i - 1)^s ( 1 + O_{a,b}(( |s| + 1 )^2 k_i^{-1} ) ),
\end{equation}

which holds by Stirling's formula for any vertical strip $0 < a \le \text{Re}(s) \le b$.  If we apply this to (\ref{unfold1}) we may invert the Mellin transform of $h$ to obtain

\begin{multline}
\label{unfold3}
S_\xi(T) \ll \frac{ | a_\xi(T^{-1}) | }{ Nk L( 1, \sym^2 \pi ) }  \sum_{ \eta > 0} | \lambda_\pi(\eta) \lambda_\pi( \eta + \xi ) | \\
\times \prod_{i=1}^n \left( h \left( \frac{ T(k_i-1) }{ 4\pi (\eta_i + \xi_i/2) } \right) + O \left( k_i^\epsilon \left( \frac{T}{\eta_i + \xi_i/2} \right)^{1+\epsilon} \right) \right).
\end{multline}

The final step in proving (\ref{unfold}) from this is showing that when this product is expanded out, the total contribution from all the error terms is $\ll Nk \| k \|^{-\nu +\epsilon} T^{n+\epsilon}$.  It is enough to consider one such term which contains `main term' factors at the first $t$ places and error term factors at the last $n-t$.  As the factors of $h$ provide a truncation at the first $t$ places, the contribution from this term is bounded by

\begin{equation}
\label{unfoldexpand}
\ll \sum_{ \substack{ \eta \in \OO \\ | \eta_i + \xi_i/2 | \ll Tk_i, \: i \le t } } | \lambda_\pi(\eta) \lambda_\pi( \eta + \xi ) | \prod_{i>t} k_i^\epsilon \left( \frac{T}{\eta_i + \xi_i/2} \right)^{1+\epsilon}.
\end{equation}

If we let $\tau = \eta + \xi/2$, then $\tau \in \tfrac{1}{2} \OO^+$ (because we may assume $\eta_i$ and $\eta_i + \xi_i$ are positive), and because $|\xi_i| \ll T^{1+\epsilon}$ we have $\tau_i + T^2 \gg \max( \eta_i, \eta_i + \xi_i)$ for all $i$.  Therefore by Deligne's bound,

\begin{equation*}
\lambda_\pi(\eta) \lambda_\pi( \eta + \xi ) \ll N\eta^\epsilon N(\eta+\xi)^\epsilon \ll \| \tau \|^\epsilon + T^\epsilon
\end{equation*}

and the expression (\ref{unfoldexpand}) may be simplified to

\begin{equation*}
\ll \sum_{ \tau_i \ll Tk_i, \: i \le t } ( \| \tau \|^\epsilon + T^\epsilon ) \prod_{i>t} k_i^\epsilon T^{1+\epsilon} \tau_i^{-1-\epsilon}.
\end{equation*}

Because $\tau_i \ll Tk_i$ for $i \le t$, this may be further reduced to

\begin{equation*}
\ll Nk^\epsilon T^{n-t+\epsilon} \sum_{ \substack{ \tau_i \ll Tk_i, \\ i \le t } } \prod_{i>t} \tau_i^{-1-\epsilon}.
\end{equation*}

If we project the set $\{ \tau \in \tfrac{1}{2} \OO^+ : \tau_i \ll Tk_i, i \le t \}$ onto the last $n-t$ real places, we obtain a set $\OO' \in \R^{n-t}$, any two of whose elements are a distance $\gg \delta = ( T^t \prod_{i \le t} k_i )^{-1/(n-t)}$ from each other and the origin.  The sum above may therefore be bounded by

\begin{eqnarray*}
& \ll & T^t \prod_{i \le t} k_i \int_{x_i \gg \delta} \prod_{i > t} x_i^{-1-\epsilon} dx_i \\
& \ll & T^{t+\epsilon} \prod_{i \le t} k_i^{1+\epsilon},
\end{eqnarray*}

so that the total contribution of our error term is $\ll Nk^\epsilon T^{n+\epsilon} \prod_{i \le t} k_i^{1+\epsilon}$.  As $t < n$, we are omitting a factor of size at least $\| k \|^\nu$ from $Nk$, so this is $\ll Nk \| k \|^{-\nu +\epsilon} T^{n+\epsilon}$ as required.  Therefore

\begin{multline*}
S_\xi(T) \ll \frac{ 1}{ Nk L( 1, \sym^2 \pi ) } | a_\xi(T^{-1}) |  \biggl( \sum_{ \eta > 0} | \lambda_\pi(\eta) \lambda_\pi( \eta + \xi ) | \prod_{i=1}^n h \left( \frac{ T(k_i-1) }{ 4\pi (\eta_i + \xi_i/2) } \right) \\
+ O( Nk |k|^{-\nu +\epsilon} T^{n+\epsilon} ) \biggr),
\end{multline*}

which is the bound (\ref{unfold}).\\

We now deal with the case $\xi = 0$.  Squaring out $|F_k(z)|^2$ and integrating in $x$ gives

\begin{equation*}
I_1(T) = \sqrt{D} \sum_{ \eta > 0} |a_f(\eta)|^2 \left( \int_{\F^+} g(Ty) y^{k-2} e^{-4\pi \tr(\eta \kappa y)} dy \right).
\end{equation*}

Expressing $a_f$ in terms of $\lambda_\pi$ and symmetrising by the action of $\OO_+^\times$, this becomes

\begin{eqnarray*}
I_1(T) & = & \sqrt{D} |a_f(1)|^2 \kappa^{1-k} \sum_{ \eta > 0} |\lambda_\pi(\eta)|^2 \left( \int_{\F^+} g(Ty) (\eta \kappa y)^{k-1} e^{-4\pi \tr(\eta \kappa y)} dy^\times \right) \\
 & = & \sqrt{D} |a_f(1)|^2 \kappa^{1-k} \sum_{ ( \eta ) > 0} |\lambda_\pi(\eta)|^2 \left( \int_{\F^+} g(Ty) \widetilde{ \psi }(\eta \kappa y) dy^\times \right) \\
  & = & \sqrt{D} |a_f(1)|^2 \kappa^{1-k} \sum_{ ( \eta ) > 0} |\lambda_\pi(\eta)|^2 \left( \int_{\F^+ / \OO^\times_+} \widetilde{g}(Ty) \widetilde{ \psi }(\eta \kappa y) dy^\times \right),
\end{eqnarray*}

where

\begin{equation*}
\widetilde{g}(y) = \sum_{u \in \OO^\times_+} g( uy ), \quad \widetilde{ \psi }(y) = \sum_{u \in \OO^\times_+} (uy)^{k-1} \exp( -4 \pi \tr( uy )).
\end{equation*}

If we let $G(s,m)$ be the Mellin transform of $\widetilde{g}$ as in (\ref{cuspmellin}), then by the Mellin inversion formula we have

\begin{multline*}
I_1(T)  = \frac{ \sqrt{D} }{ 2^{n-1} R } |a_f(1)|^2 \kappa^{1-k} \sum_{ ( \eta ) } |\lambda_\pi(\eta)|^2 \frac{1}{2\pi i} \sum_m \int_{(\sigma)} G(-s,-m) \left( \frac{T}{ 4\pi } \right)^{ns} \\
N(\eta \kappa)^{-s} \lambda_{-m}(\eta \kappa) \prod_{i=1}^n (4\pi)^{-k_i + 1} \Gamma( s + \beta(m,i) + k_i - 1 ) ds.
\end{multline*}

Forming the $L$-function from the sum over $\eta$, this becomes

\begin{multline*}
I_1(T)  = \frac{ \sqrt{D} }{ 2^{n-1} R } |a_f(1)|^2  \kappa^{1-k-s} \lambda_{-m}(\kappa) \frac{1}{2\pi i} \sum_m \int_{(\sigma)} G(-s,-m) \left( \frac{T}{ 4\pi } \right)^{ns} \\
L( s, \sym^2 \pi \otimes \lambda_{-m} ) \frac{ L( s, \lambda_{-m} ) }{ L( 2s, \lambda_{-2m} ) } \prod_{i=1}^n (4\pi)^{-k_i + 1} \Gamma( s + \beta(m,i) + k_i - 1 ) ds.
\end{multline*}

When we substitute the value of $|a_f(1)|^2$ and shift the line of integration to $\sigma = 1/2$, we pick up a main term from the pole at $s=1$ which is

\begin{equation*}
\frac{ \pi^n G(-1,0) T^n }{ 2 D \zeta_F(2) },
\end{equation*}

and in section \ref{appvol} it is shown that this agrees with the expected main term $C_g T^n$.  We therefore have

\begin{equation}
\label{I1}
I_1(T) = C_g T^n + E_{1/2}(T),
\end{equation}

with

\begin{multline}
\label{mainbound}
E_{1/2}(T) = \frac{ (2\pi^2)^n N\kappa^{1/2} \lambda_{-m}(\kappa) }{ R \sqrt{D} L( 1, \sym^2 \pi ) } \frac{1}{2\pi i}  \sum_m \int_{(1/2)} G(-s,-m) \left( \frac{T}{ 4\pi } \right)^{ns} \\
L( s, \sym^2 \pi \otimes \lambda_{-m} ) \frac{ L( s, \lambda_{-m} ) }{ L( 2s, \lambda_{-2m} ) } \prod_{i=1}^n \frac{ \Gamma( s + \beta(m,i) + k_i - 1 ) }{ (k_i-1) \Gamma( k_i - 1) } ds.
\end{multline}

If we apply the Luo-Sarnak lemma to \ref{mainbound} in the form

\begin{equation*}
\left| \frac{ \Gamma( s + \beta(m,i) + k_i - 1 ) }{ \Gamma( k_i - 1) } \right| \ll (k_i - 1)^{\text{Re}(s)} (1 + |s| + \| m \|)^2,
\end{equation*}

together with the rapid decay of $G(s,m)$, the convex bound for $L( \lambda_{-m}, s)$ and any lower bound of the form $L( \lambda_{-2m}, 1 + it) \gg ( |t| + \| m \| + 1)^{-A}$, we obtain

\begin{equation}
\label{E}
E_{1/2}(T) \ll \left( \frac{T^n}{ Nk } \right)^{1/2} \frac{ 1 }{ L( 1, \sym^2 \pi ) } \sum_m \int_{-\infty}^{+\infty} \frac{ |L( 1/2+it, \sym^2 \pi \otimes \lambda_{-m} )| }{ (|t| + \| m \| + 1)^A } ds.
\end{equation}

The asymptotic (\ref{eisint}) and bound (\ref{Rbound}) for the error now follow by combining (\ref{E}), (\ref{I1}) and (\ref{S0}), which completes the proof of proposition \ref{Holo1}.

\section{Sieving for Mass Equidistribution: The Mixed Case}
\label{sievemixed}

In this section we generalise proposition \ref{Holo1} to allow complex places of $F$.  As we may no longer talk about holomorphic forms we now let $F_k$ be a vector valued cohomological form with associated automorphic representation $\pi$, and as before assume the existence of a $\nu > 0$ such that $k_i \ge \| k \|^\nu$ for all $i$.  Our bound for $\langle \phi F_k, F_k \rangle$ in terms of shifted convolution sums is as follows.

\begin{prop}
\label{Holomixed}
Let $T \ge 1$ and $\epsilon > 0$.  Fix $h \in C^\infty_0(\R^+)$ positive and let $g \in C^\infty_0 ( \R_+^r)$ be its r-fold product, and define $C_g = \langle E(g | z), 1 \rangle / Vol(Y)$.  Let $J$ be the set of $r_2$-tuples $J = \{ (j_{r_1+1}, \ldots, j_{r}) | 0 \le j_i \le k_i \}$.  Fix an automorphic form $\phi$ with Fourier expansion

\begin{equation*}
\phi(z) = \sum_{ \xi \in \OO} a_\xi(y) e( \tr( \xi \kappa x ) ).
\end{equation*}

If $\phi$ is a Hecke-Maass cusp form, then

\begin{equation}
\label{Maassintmixed}
\langle \phi F_k, F_k \rangle = c^{-1}T^{-n} \sum_{0 < \| \xi \| < T^{1+\epsilon} } S_\xi(T) + O(T^{-n/2}).
\end{equation}

If $\phi$ is a pure incomplete Eisenstein series, then

\begin{equation}
\label{eisintmixed}
\langle \phi F_k, F_k \rangle = \frac{1}{ Vol(Y) } \langle \phi, 1 \rangle + c^{-1}T^{-n} \sum_{0 < \| \xi \| < T^{1+\epsilon} } S_\xi(T) + O\left( \frac{1 + R_k(f)}{T^{n/2}} \right)
\end{equation}

with

\begin{equation}
\label{Rboundmixed}
R_k(f) = \frac{1}{\sqrt{Nk} L( 1, \sym^2 \pi ) } \sum_m \int_{-\infty}^{+\infty} \frac{ |L( 1/2+it, \sym^2 \pi \otimes \lambda_{-m} )| }{ (|t| + \| m \| + 1)^A } dt.
\end{equation}

Furthermore, we have the bound

\begin{equation}
\label{unfoldmixed}
S_\xi(T) \ll \frac{ | a_\xi(T^{-1}) |  }{ Nk L( 1, \sym^2 \pi ) } \left( \sum_{j \in J} A_{\xi,j} + O( Nk \| k \|^{-\nu +\epsilon} T^{n+\epsilon} ) \right),
\end{equation}

where

\begin{multline}
\label{unfoldj}
A_{\xi,j}(T) \ll \Biggl( \sum_{\eta > 0} | \lambda_\pi(\eta) \lambda_\pi( \eta + \xi )|  \prod_{i \le r_1} \frac{ 1 }{ k_i } h\left( \frac{T (k_i-1) }{ 4\pi |\eta_i \kappa_i| } \right) \\
\times \prod_{i > r_1} \frac{1}{ k_i (k_i - j_i) j_i }  h\left( \frac{T \sqrt{ j_i(k_i-j_i) } }{ 2\pi |\eta_i \kappa_i| } \right) \Biggr).
\end{multline}

\end{prop}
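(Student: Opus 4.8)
The plan is to follow the architecture of the totally real case (Proposition~\ref{Holo1}) step for step, with all of the genuinely new content concentrated in the Archimedean integrals at the complex places. First I would introduce the regularised unfolding integral
\[
I_\phi(T) = \int_{\R_+^r} g(Ty)\, Ny^{-2} \left( \int_{\F/\OO} \phi(z) |F_k(z)|^2\, dx \right) dy,
\]
the exact analogue of (\ref{Iphi}), where the $y$-variable now ranges over $\R_+^r$ and the $x$-variable over the unipotent $\F/\OO \simeq \R^{r_1} \times \C^{r_2}$ modulo the lattice $\OO$. Symmetrising $g$ over $\OO_+^\times$ to $\widetilde{g}$, forming its Mellin transform $G(s,m)$ against the Hecke characters $\lambda_m$, and refolding exactly as in (\ref{gtilde})--(\ref{Iphi2}) rewrites $I_\phi(T)$ as a sum of contour integrals of $T^{ns} G(-s,-m) \int_Y E(s,m,z) \phi(z) |F_k(z)|^2\, dv$; nothing in this manipulation is sensitive to the presence of complex places.

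Next I would carry out the two evaluations of $I_\phi(T)$. The contour shift to $\text{Re}(s) = 1/2$ is verbatim Lemma~\ref{Iphimain}: the residue of the Eisenstein series at $s=1$ produces $C_g \langle \phi F_k, F_k \rangle T^n$, with the value of the residue fixed by the volume computation of section~\ref{appvol}, and the remaining line integral is $O(T^{n/2})$ by Efrat's Fourier expansion together with the decay bound for the $K$-Bessel function, which holds at complex places as well. Truncating $\phi$ to $\phi^*(z) = \sum_{\|\xi\| < T^{1+\epsilon}} a_\xi(y) e(\tr(\xi\kappa x))$ costs only $O(T^{n/2})$ by Lemma~\ref{coeffbound}, which is already stated for an arbitrary $F$. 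Expanding $\phi^*$ in its Fourier series then gives $C_g \langle \phi F_k, F_k \rangle T^n = S_0(T) + \sum_{0 < \|\xi\| < T^{1+\epsilon}} S_\xi(T) + O(T^{n/2})$, which is (\ref{Maassintmixed}). The $\xi = 0$ term is handled as in section~\ref{holoshift}: squaring out $|F_k|^2$, integrating in $x$, forming the symmetric-square $L$-function, and shifting to $\text{Re}(s) = 1/2$; the only difference is that the Archimedean $\Gamma$-factors now carry the complex-place contributions coming from (\ref{firstfourier}), and one checks that the residue at $s=1$ reproduces $C_g T^n$ while the leftover line integral is bounded by $R_k(f)$, exactly as for (\ref{I1})--(\ref{E}), giving (\ref{eisintmixed}) and (\ref{Rboundmixed}).

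The heart of the mixed case is the treatment of $S_\xi(T)$ for $\xi \neq 0$. Squaring out $|F_k(z)|^2$ and inserting the Whittaker expansions (\ref{whittaker1})--(\ref{whittaker2}), the orthonormality of the basis $\{v_t^*\}$ of $\rho_k^*$ kills all the cross terms in the complex-place factors and leaves, at each place $i > r_1$, a diagonal sum over $0 \le j_i \le k_i$ of $\binom{k_i}{j_i} K_{k_i/2 - j_i}(4\pi |\eta_i \kappa_i y_i|) K_{k_i/2 - j_i}(4\pi |(\eta_i + \xi_i)\kappa_i y_i|)$; these diagonal indices are exactly the tuples in the set $J$. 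After integrating in $x$ to pair $\eta$ with $\eta + \xi$ and bounding $a_\xi(y)$ by $|a_\xi(T^{-1})|$ using positivity of the remaining (Bessel and $g$) integrand, $S_\xi(T)$ becomes a constant carrying the normalisation (\ref{firstfourier}) times $|a_\xi(T^{-1})|$ times $\sum_{j \in J} \sum_{\eta > 0} |a_f(\eta) a_f(\eta + \xi)| \prod_i I_i$, where each $I_i$ is a one-dimensional Archimedean integral. At the real places $I_i$ is evaluated exactly as in (\ref{unfold1})--(\ref{unfold3}), using the Mellin transform of $h$ and the Luo--Sarnak estimate (\ref{luosarnak}), giving $h(T(k_i-1)/4\pi(\eta_i + \xi_i/2))$ up to an admissible error.

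The remaining --- and genuinely new --- task is to bound the complex-place integral, essentially $\int_0^\infty h(Ty)\, y^{k_i - 1} K_{k_i/2 - j_i}(a y) K_{k_i/2 - j_i}(b y)\, dy$ with $a, b$ of size proportional to $|\eta_i \kappa_i|$, uniformly in $j_i \in \{0, \ldots, k_i\}$. I would analyse this using the standard uniform asymptotics for $K$-Bessel functions of large order $\nu = |k_i/2 - j_i|$ and large argument: the integrand concentrates near the common turning point, and after combining with the binomial weight $\binom{k_i}{j_i}$ and the $\Gamma(k_i/2 + 1)^{-2}$ coming from (\ref{firstfourier}) this yields the weight $k_i^{-1}(k_i - j_i)^{-1} j_i^{-1}$ and localises $\eta_i$ via $h(T\sqrt{j_i(k_i - j_i)}/2\pi|\eta_i\kappa_i|)$, as asserted in (\ref{unfoldj}); the detailed computation belongs in the appendix, section~\ref{appendix}. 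Assembling the main terms over all places produces $\sum_{j \in J} A_{\xi,j}$, and expanding the product to collect the error terms, the bound $O(Nk \|k\|^{-\nu + \epsilon} T^{n+\epsilon})$ follows by the same counting argument as in the real case: truncation at the non-error places confines $\eta$ to a box of the right size, Deligne's and Blasius's bound controls $\lambda_\pi(\eta)\lambda_\pi(\eta + \xi)$, and omitting even one place from $Nk$ loses a factor $\gg \|k\|^\nu$ since $k_i \ge \|k\|^\nu$. I expect the uniform Bessel-integral asymptotic --- in particular handling the transition regions where $j_i$ is close to $0$ or $k_i$, so that the $j_i$-sum of $\binom{k_i}{j_i} I_i$ is controlled with the stated weight --- to be the main obstacle; everything else is a transcription of sections~\ref{holofourier}--\ref{holoshift}.
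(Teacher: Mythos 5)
Your overall scaffolding (unfolding over $\Gamma_U$, symmetrising $g$, contour shift, truncation of $\phi$, factoring $S_\xi$ over places) matches the paper, and you correctly identify that the Hermitian pairing $\langle {\bf K}_k(\eta\kappa y), {\bf K}_k((\eta+\xi)\kappa y) \rangle$ diagonalises to a $j$-sum over the set $J$. But you have a genuine gap at the step you yourself flag as ``the main obstacle.'' The paper does \emph{not} estimate the integral $\int_0^\infty h(Ty)\, y^{\cdots} K_\nu(ay)K_\nu(by)\,dy$ with $a \ne b$ by uniform Bessel asymptotics. Instead it applies Cauchy--Schwarz/H\"older to the pairing first, writing
\begin{equation*}
\langle {\bf K}_k(\eta\kappa y), {\bf K}_k((\eta+\xi)\kappa y) \rangle \;\ll\; \left( \tfrac{N(\eta+\xi)}{N\eta}\right)^{1/2} |{\bf K}_k(\eta\kappa y)|^2 + \left( \tfrac{N\eta}{N(\eta+\xi)}\right)^{1/2} |{\bf K}_k((\eta+\xi)\kappa y)|^2,
\end{equation*}
so the cross term with two distinct Bessel arguments never appears. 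After a change of variable, each complex-place factor is then $\int_0^\infty h(\cdot)\, y^{k_i}\, K_{k_i/2-j_i}(4\pi y)^2\, dy^\times$, which is evaluated \emph{exactly} by the Mellin formula (\ref{bessel}) for $\int y^\lambda K_\mu K_\nu\, dy$ (equal arguments, general orders); this produces $\Gamma$-ratios to which the Luo--Sarnak estimate (\ref{luosarnak}) applies directly and yields the weight $k_i^{-1}j_i^{-1}(k_i-j_i)^{-1}$ and the localisation $h(T\sqrt{j_i(k_i-j_i)}/2\pi|\eta_i\kappa_i|)$ without any stationary-phase analysis.

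The second ingredient you are missing is the partition $J = J_0 \cup J_1$ with $J_0 = \{ j : \min(j_i, k_i - j_i) > k_i^{1/2} \text{ for all } i > r_1 \}$. The Luo--Sarnak asymptotic for the $\Gamma$-factors only holds when the arguments are large, so the detailed Mellin analysis is reserved for $j \in J_0$; for $j \in J_1$ the paper uses the crude bound coming directly from (\ref{bessel}) (no contour shift) and wins because $|J_1| \ll \|k\|^{-\nu/2}|J|$. Your proposal to deal with the transition regions $j_i$ near $0$ or $k_i$ by refining the uniform Bessel asymptotics is facing exactly the problem this partition is designed to sidestep. Your method is not provably wrong, but you have identified a hard analytic problem without solving it, whereas the paper's Cauchy--Schwarz step plus exact Mellin evaluation plus size-of-$J_1$ counting eliminates the hardness entirely. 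To complete your argument along the paper's lines, insert the H\"older step before factoring the integral over places, replace the uniform-asymptotics discussion by (\ref{bessel}), and add the $J_0/J_1$ split.
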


Theorem \ref{Home} follows from combining this with proposition \ref{shiftbound} as in the totally real case.  Most of the added difficulty in the proof of proposition \ref{Holomixed} comes from the changes to the Fourier expansion of $F_k$ in the presence of complex places.  The first difference is that $F_k$ is vector valued, which is the source of the summation over $J$ in (\ref{unfoldmixed}).  The second is that its Fourier coefficients contain Bessel functions, and so multiple Bessel integrals appear when we bound $S_\xi(T)$ in terms of shifted convolution sums.  Section \ref{mixedrevise} below contains the Fourier expansions and $L^2$ normalisations of the automorphic forms on $Y$ we shall work with, as well as the revised definition of $I_\phi(T)$ and its expression in terms of the shifted convolution integrals $S_\xi(T)$.  The remainder of the proof of proposition \ref{Holomixed} will then lie in analysing $S_\xi(T)$, which we do in section \ref{mixedshift1} in the case $\xi \neq 0$ and in section \ref{mixedshift2} in the case $\xi = 0$.

\subsection{Revision of Basic Definitions}
\label{mixedrevise}

The bounds we shall use for the Fourier coefficients of Maass forms and Eisenstein series are essentially unchanged from the totally real case.  If $\phi$ is a Maass cusp form with spectral parameter $r = (r_i)$ then we have the expansion

\begin{equation*}
\phi(z) = \sqrt{Ny} \sum_{ \xi \neq 0 } \rho(\xi) \prod_{p=1}^r K_{ir_p}(2\pi \delta_p |\xi_p \kappa_p| y_p )  e( \tr( \xi \kappa x ) )
\end{equation*}

where the $\rho(\xi)$ satisfy the Ramanujan bound on average, i.e.

\begin{equation}
\label{avramanujan1}
\sum_{ \| \xi \| \le T } | \rho(\xi) | \ll T^n.
\end{equation}

The coefficients of a pure incomplete Eisenstein series may again be expressed in terms of those of complete Eisenstein series $E(s, m, z )$.  The Fourier expansion of these is computed in section \ref{appeisenstein}, following Efrat in the totally real case \cite{Ef}, and is

\begin{multline*}
E(s, m, z ) = Ny^s \lambda_m(y) + \phi(s,m) Ny^{1-s} \lambda_{-m}(y) + \frac{2^r \pi^{ns - r_2} }{ \sqrt{|D|} } \sqrt{Ny} \times \\
\sum_{ \xi \neq 0 } N(\delta \xi \kappa )^{s - 1/2} \lambda_m(\delta \xi \kappa ) \prod_{p=1}^r \frac{ K_{ \delta_p (s-1/2) + \beta(m,p) } ( 2\pi \delta_p |\xi_p \kappa_p| y_p ) }{ \Gamma(\delta_p s + \beta(m,p)) } \frac{ \sigma_{1-2s, -2m}(\xi \kappa) } {\zeta(2s, \lambda_{-2m} )} e( \tr(\xi \kappa x)),
\end{multline*}

where $\beta(m,p)$ is as in (\ref{beta}) and

\begin{eqnarray*}
\phi(s) & = & \frac{ \pi^{n/2} }{ \sqrt{|D|} } \prod_{p \le r_1} \frac{ \Gamma( s + \beta(m,p) -1/2) }{ \Gamma( s + \beta(m,p) ) } \prod_{p > r_1} \frac{2}{ 2s + \beta(m,p) -1} \frac{ \zeta( 2s-1, \lambda_{-2m} ) }{ \zeta( 2s, \lambda_{-2m} ) } \\
& = & \frac{ \theta(s-1/2) }{ \theta(s) }, \\
\theta(s) & = & |D|^s \pi^{-ns} \prod_{p \le r_1} \Gamma( s + \beta(m,p) ) \prod_{p > r_1} 2^{-2s} \Gamma( 2s + \beta(m,p) )\zeta(2s, \lambda_{-2m} ), \\
\sigma_{1-2s,-2m}(\xi \kappa) & = & \sum_{\substack{ ( c ) \\ \xi/c \in \OO } } \frac{ \lambda_{-2m}(c) }{ |Nc|^{2s-1} }.
\end{eqnarray*}

By Mellin inversion we again have the two asymptotics

\begin{eqnarray}
\notag
a_0(y) & = & \psi( Ny ) \lambda_m(y) + O( Ny^{-1} ), \\
\label{constantterm}
& = & \frac{1}{ \text{Vol}(Y) } \langle E(\psi, m | z), 1 \rangle + O( Ny^{1/2} )
\end{eqnarray}

for the zeroth Fourier coefficient of $E(\psi, m | z)$ as $Ny$ tends to 0 and infinity.  The bounds on the nonzero coefficients of Hecke-Maass cusp forms and pure incomplete Eisenstein series are also unchanged, and so lemma \ref{coeffbound} of section \ref{holofourier} continues to hold.  We recall the formula for the Fourier expansion of the vector valued function $F_k$ in $\HH_F'$:

\begin{equation*}
F_k(z) = \sum_{\eta > 0} a_f(\eta) {\bf K}_k( \eta \kappa y) e( \tr( \eta \kappa x) ),
\end{equation*}

where ${\bf K}_k(y)$ are as in (\ref{whittaker1}) and (\ref{whittaker2}).  The coefficients $a_f(\eta)$ satisfy the proportionality relation $a_f(\eta ) = \lambda_\pi(\eta) N\eta^{-1/2} a_f( 1 )$, where $a_f(1)$ is given by (\ref{firstfourier}).  As before, we assume the Ramanujan bound $| \lambda_\pi(\eta) | \le \tau(\eta)$; see the discussion of section \ref{weightresults} for the circumstances under which this is known.

$I_\phi(T)$ is still a regularised unfolding integral over $\Gamma_U$, and when constructing it we bear in mind that $\HH_F \simeq \R_+^r \times \F$, and $\Gamma_U \backslash \HH_F \simeq \R_+^r \times (\F / \OO)$.  To define $I_\phi(T)$ we therefore choose a positive function $h \in C^\infty_0(\R^+)$ and let $g \in C^\infty_0( \R_+^r )$ be its $r$-fold product.  Let

\begin{equation*}
\widetilde{g} = \sum_{u \in \OO^\times_+} g( |u| y )
\end{equation*}

be the symmetrisation of $g$ under the action of $\OO^\times_+$, and let

\begin{equation}
\label{cuspmellin1}
G(s,m) = \int_{\R_+^r} g(y) Ny^s \lambda_m(y) dy^\times
\end{equation}

be the Mellin transform of $\widetilde{g}$ thought of as a function on $\R_+^r / \OO^\times_+$.  After calculating the volume of $\R_+^r / \OO_+^\times$ with the restriction of hyperbolic measure as in Efrat \cite{Ef}, we may invert to obtain

\begin{equation}
\label{cuspmellin2}
\widetilde{g}(y) = \frac{ 1 }{ 2 \pi i V_c } \sum_m \int_{(\sigma)} G(-s,-m) Ny^s \lambda_m(y) ds,
\end{equation}

where $V_c$ is the volume of $\R_+^r / \OO_+^\times$ and is equal to $2^{r_1 - r_2 -1 + \delta_{0 r_1}} R$ (see section \ref{appvol} for this calculation).  We now define $I_\phi(T)$ to be

\begin{equation}
\label{Iphimixed}
I_\phi(T) = \int_{\R_+^r} g(Ty) Ny^{-1} \left( \int_{ \F / \OO } \phi(z) |F_k(z)|^2 dx \right) dy^\times.
\end{equation}

To rewrite this in terms of integrals against Eisenstein series, we symmetrise over $\OO_+^\times$ and substitute (\ref{cuspmellin2}), giving

\begin{eqnarray*}
\notag
I_\phi(T) & = & \frac{ 1 }{ 2 \pi i V_c } \sum_m \int_{(\sigma)} G(-s,-m)T^{ns} \int_{\R_+^r / \OO_+^\times } Ny^{s-1} \lambda_m(y) \\
\notag
& \quad & \quad \quad \int_{ \F / \OO } \phi(z) |F_k(z)|^2 dx dy^\times ds \\
\notag
& = &  \frac{ \omega_+ }{ 2 \pi i V_c } \sum_m \int_{(\sigma)} G(-s,-m)T^{ns} \int_{ \Gamma_\infty \backslash \HH_F } Ny^s \lambda_m(y) \phi(z) |F_k(z)|^2 dv ds \\
\label{Iphimixed2}
 & = &  \frac{ \omega_+ }{ 2 \pi i V_c } \sum_m \int_{(\sigma)} G(-s,-m)T^{ns} \int_Y E(s, m, z ) \phi(z) |F_k(z)|^2 dv ds.
\end{eqnarray*}

On shifting the line of integration to $\sigma = 1/2$, we have the asymptotic

\begin{equation}
\label{Iphimain2}
I_\phi(T) = C_g \langle \phi F_k, F_k \rangle T^n + O( T^{n/2} ).
\end{equation}

(See section \ref{appvol} for the verification that the residue at $s=1$ is correct.)  Comparing this with the form (\ref{Iphimixed}) of $I_\phi(T)$ and truncating the Fourier expansion of $\phi$ to those terms with $\| \xi \| \ll T^{1+\epsilon}$ we arrive at the equation 

\begin{eqnarray}
\label{truncatemixed}
c \langle \phi F_k, F_k \rangle T^n + O( T^{n/2} ) & = & \int_{\R_+^r} g(Ty) Ny^{-1} \left( \int_{ \F / \OO } \phi^*(z) |F_k(z)|^2 dx \right) dy^\times \\
\notag
\text{where} \quad \phi^*(z) & = & \sum_{ \| \xi \| < T^{1+\epsilon} } a_\xi(y) e( \tr( \xi \kappa x ) ),
\end{eqnarray}

which is the starting point for our analysis of Fourier coefficients.

\subsection{Extracting Shifted Convolution Sums: Nonzero Shifts}
\label{mixedshift1}

Define $S_\xi(T)$ to be the contribution of the $\xi$th Fourier coefficient of $\phi$ to (\ref{truncatemixed}) as before.  It remains to estimate $S_\xi(T)$ in terms of shifted convolution sums, which we do first when $\xi \neq 0$.  Squaring out $|F_k(z)|^2$ and integrating in $x$ gives

\begin{equation*}
S_\xi(T) = 2^{-r_2} \sqrt{D} \sum_{ \eta > 0} a_f(\eta) \overline{ a_f( \eta + \xi ) } \left( \int_{\R_+^r} g(Ty) a_\xi(y) \langle {\bf K}_k( \eta \kappa y ), {\bf K}_k( (\eta + \xi) \kappa y ) \rangle Ny^{-1} dy^\times \right).
\end{equation*}

Applying H\"older's inequality, this becomes

\begin{eqnarray*}
S_\xi(T) & \ll & | a_\xi(T^{-1}) | \sum_{ \eta > 0} | a_f(\eta) a_f( \eta + \xi ) |  \Biggl( \int_{\R_+^r} g(Ty) \biggl[ \left( \frac{ N(\eta + \xi)}{ N\eta } \right)^{1/2} | {\bf K}_k( \eta \kappa y) |^2 \\
&& + \left( \frac{ N\eta }{ N(\eta + \xi)} \right)^{1/2} | {\bf K}_k( (\eta + \xi) \kappa y) |^2 \biggr]  Ny^{-1} dy^\times \Biggr). \\
\end{eqnarray*}

The second term in this integral behaves identically to the first, and we ignore it for simplicity.  Applying a change of variable and the normalisation $a_f(\eta) = \lambda_\pi(\eta) N\eta^{-1/2} a_f(1)$, we have

\begin{eqnarray}
\notag
S_\xi(T) & \ll & |a_\xi(T^{-1}) | \sum_{ \eta > 0} | a_f(\eta) a_f( \eta + \xi )| N(\eta (\eta + \xi))^{1/2} \\
\notag
& \quad & \quad \int_{\R_+^r} g(T |\eta \kappa |^{-1} y) | {\bf K}_k(y) |^2 Ny^{-1} dy^\times \\
\label{sxi1}
& \ll & |a_\xi(T^{-1})|  |a_f(1)|^2 \sum_{ \eta > 0} | \lambda_\pi(\eta) \lambda_\pi( \eta + \xi )| \\
\notag
& \quad & \quad \int_{\R_+^r} g(T |\eta \kappa |^{-1} y) | {\bf K}_k(y) |^2 Ny^{-1} dy^\times.
\end{eqnarray}

We have the following formula for $|K_f|^2$ from (\ref{whittaker1}) and (\ref{whittaker2}),

\begin{equation*}
|{\bf K}_k(y)|^2 = \prod_{i \le r_1} y_i^{k_i} \exp( -4\pi y_i) \prod_{i > r_1} y_i^{k_i+2} \sum_{j=0}^{k_i} \binom{k_i}{j} | K_{k_i/2 - j}(4\pi y_i) |^2,
\end{equation*}

where we now take $y_i \in \R$ for all $i$.  For a multi-index $j = (j_i) \in J$ we define $K_{k,j}(y)$ to be the corresponding term in the formula for ${\bf K}_k(y)$ so that

\begin{equation}
\label{whittakerj}
|K_{k,j}(y)|^2 = \prod_{i \le r_1} y_i^{k_i} \exp( -4\pi y_i) \prod_{i > r_1} y_i^{k_i+2} \binom{k_i}{j_i} | K_{k_i/2 - j}(4\pi y_i) |^2,
\end{equation}

and define $S_{\xi,j}(T)$ be the corresponding term in $S_\xi(T)$.  We shall partition $J$ as $J_0 \cup J_1$, where $J_0 = \{ j \; | \; \min( j_i, k_i - j_i ) > k_i^{1/2} \}$.  The reason for seperating the indices in this way is that for $j \in J_0$, the arguments of all the Bessel functions appearing in (\ref{whittakerj}) are bounded away from $\pm k_i/2$.  As a result, when we calculate the Mellin transforms of $|K_{k,j}(y)|^2$ the gamma factors which appear have arguments with large real parts, and so we may approximate them well using the Luo-Sarnak lemma.  For $j \in J_0$ this lets us give good bounds for $S_{\xi,j}(T)$, while a weaker bound will suffice for the remaining terms because $J_1$ is small (in fact $|J_1| \ll \| k \|^{-\nu/2} |J|$).  We begin by deriving this weak bound for all $j$, interchanging the sum and integral in (\ref{sxi1}) to obtain

\begin{equation}
\label{sxi2}
S_{\xi,j}(T) \ll |a_\xi(T^{-1})| |a_f(1)|^2  \int_{\R_+^r} | K_{k,j}(y) |^2 Ny^{-1} \sum_{ \eta > 0} | \lambda_\pi(\eta) \lambda_\pi( \eta + \xi )| g(T |\eta \kappa |^{-1} y) dy^\times.
\end{equation}

The inner function

\begin{equation*}
\sum_{ \eta > 0} | \lambda_\pi(\eta) \lambda_\pi( \eta + \xi )| g(T |\eta \kappa |^{-1} y)
\end{equation*}

is bounded above by the sum over $\eta$ such that $Ty_i \ll |\eta_i| \ll Ty_i$ for all $i$, weighted by $| \lambda_\pi(\eta) \lambda_\pi( \eta + \xi )| \ll \| Ty \|^\epsilon$, from which it follows that 

\begin{equation*}
\sum_{ \eta > 0} | \lambda_\pi(\eta) \lambda_\pi( \eta + \xi )| g(T |\eta \kappa |^{-1} y) \ll T^{n+\epsilon} Ny \prod_{i=1}^r (1+y_i^\epsilon).
\end{equation*}

Applying this to (\ref{sxi2}) gives the upper bound

\begin{eqnarray*}
S_{\xi,j}(T) \ll T^{n+\epsilon} |a_\xi(T^{-1})| |a_f(1)|^2  \int_{\R_+^r} | K_{k,j}(y) |^2 \prod_{i=1}^r (1+y_i^\epsilon) dy^\times.
\end{eqnarray*}

We may factorise this integral as a product over the Archimedean places, and at each place we will bound the product of the local integral and the corresponding terms of $|a_f(1)|^2$.  The factor corresponding to a real place $i \le r_1$ is

\begin{eqnarray*}
\frac{ (4\pi)^{k_i} }{ \Gamma(k_i) } \int_0^\infty y^{k_i} \exp(-4\pi y) (1 + y^\epsilon) dy^\times & \ll & \frac{ 1 }{ \Gamma(k_i) } ( \Gamma(k_i) + \Gamma(k_i +\epsilon) ) \\
 & \ll & k_i^\epsilon.
\end{eqnarray*}

For $i > r_1$, it is

\begin{eqnarray*}
\frac{ (2\pi)^{k_i} }{ \Gamma( k_i/2 + 1)^2 } \binom{k_i}{j_i} \int_0^\infty y^{k_i+2} | K_{k_i/2 - j_i}(4\pi y) |^2 (1 + y^\epsilon) dy^\times,
\end{eqnarray*}

and we may evaluate this using the following formula, taken from \cite{GR}:

\begin{equation}
\label{bessel}
\int_0^\infty y^\lambda K_\mu(y) K_\nu(y) dy = \frac{ 2^{\lambda-2} }{ \Gamma( \lambda+1 ) } \prod_\pm \Gamma \left( \frac{ 1 + \lambda \pm \mu \pm \nu }{2} \right).
\end{equation}

Applying this, we obtain

\begin{multline*}
\frac{1}{ \Gamma( k_i/2 + 1)^2 } \binom{k_i}{j_i} \Biggl( \frac{ \Gamma( k_i/2 + 1)^2 }{ \Gamma(k_i+2) } \Gamma(j_i+1) \Gamma(k_i-j_i+1) \\
+ \frac{ \Gamma( k_i/2 + 1 +\epsilon/2)^2 }{ \Gamma(k_i+2 + \epsilon) } \Gamma(j_i+1 +\epsilon/2) \Gamma(k_i-j_i+1 +\epsilon/2) \Biggr) \ll k_i^{1-\epsilon}.
\end{multline*}

Multiplying these local integrals gives the bound

\begin{equation}
S_{\xi,j}(T) \ll \frac{ |a_\xi(T^{-1})| }{ L( 1, \sym^2 \pi ) } Nk^\epsilon \prod_{i>r_1} k_i^{-1},
\end{equation}

and so the contribution to $S_\xi(T)$ from all $j \in J_1$ is bounded above by

\begin{equation*}
\ll \frac{ |a_\xi(T^{-1})| }{ L( 1, \sym^2 \pi ) } \| k \|^{-\nu/2 + \epsilon}.
\end{equation*}

We shall treat the terms with $j \in J_0$ more carefully, by factorising the inner integral in (\ref{sxi1}) and using Mellin inversion to estimate each factor.  As before, we shall pair each local integral with the corresponding factor from  $|a_f(1)|^2$.  For $i \le r_1$ we need to consider

\begin{equation}
\label{sxiint1}
\frac{ (4\pi)^{k_i} }{ \Gamma(k_i) } \int_{\R^+} h( T |\eta_i \kappa_i|^{-1} y) y^{k_i-1} \exp(-4\pi y) dy^\times,
\end{equation}

which by Mellin inversion is equal to

\begin{eqnarray}
\label{sxiint2}
\frac{ 4\pi }{ k_i -1 } \int_{(\sigma)} H(-s) \left( \frac{T}{ 4\pi |\eta_i \kappa_i| } \right)^s \frac{ \Gamma( s+k_i-1) }{ \Gamma(k_i-1) } ds.
\end{eqnarray}

Applying the Luo-Sarnak lemma to this, we have

\begin{eqnarray*}
(\ref{sxiint1}) & = & \frac{ 4\pi }{ k_i -1 } \int_{(\sigma)} H(-s) \left( \frac{T}{ 4\pi |\eta_i \kappa_i| } \right)^s (k_i-1)^s ( 1 + O( (|s|+1)^2 k_i^{-1} ) ) ds \\
& \ll & \frac{ 1 }{ k_i } \left( h\left( \frac{T (k_i-1) }{ 4\pi |\eta_i \kappa_i| } \right) + \left| \int_{(\sigma)} H(-s) \left( \frac{T}{ 4\pi |\eta_i \kappa_i| } \right)^s (k_i-1)^{s-1}  (|s|+1)^2 ds \right| \right).
\end{eqnarray*}

Choosing $\sigma = 1 +\epsilon$ gives the final bound

\begin{equation}
\label{sxiint3}
(\ref{sxiint1}) \ll \frac{ 1 }{ k_i } \left( h\left( \frac{T (k_i-1) }{ 4\pi |\eta_i \kappa_i| } \right) + O \left( k_i^\epsilon ( T/ |\eta_i|)^{1+\epsilon} \right) \right).
\end{equation}

For $i > r_1$, we must consider the integral 

\begin{equation}
\label{sxiint4}
\frac{ (2\pi)^{k_i} }{ \Gamma( k_i/2 +1)^2 } \binom{k_i}{j_i} \int_{\R^+} h( T |\eta_i \kappa_i|^{-1} y) y^{k_i} | K_{k_i/2 - j_i}(4\pi y) |^2 dy^\times.
\end{equation}

Applying Mellin inversion using (\ref{bessel}), this becomes

\begin{equation*}
\ll \frac{ 1 }{ \Gamma( k_i/2 +1)^2 } \binom{k_i}{j_i} \int_{\sigma} H(-s) \left( \frac{T}{ 2\pi |\eta_i \kappa_i| } \right)^s \frac{ \Gamma( (s+k_i)/2)^2 }{ \Gamma(s+k_i) } \Gamma( s/2 + j_i) \Gamma( s/2 + k_i - j_i) ds.
\end{equation*}

Because $j \in J_0$, $j_i$ and $k_i - j_i$ are $ \ge k_i^{1/2}$ so we may apply (\ref{luosarnak}) and choose $\sigma = 2+\epsilon$ to obtain

\begin{equation}
\label{sxiint5}
(\ref{sxiint4}) \ll \frac{1}{ k_i (k_i - j_i) j_i } h\left( \frac{T \sqrt{ j_i(k_i-j_i) } }{ 2\pi |\eta_i \kappa_i| } \right)   + O \left( k_i^{-3/2 + \epsilon} ( T/ |\eta_i|)^{2+\epsilon} \right).
\end{equation}

Substituting the bounds (\ref{sxiint3}) and (\ref{sxiint5}), equation (\ref{sxi1}) becomes

\begin{multline*}
S_{\xi,j}(T) \ll \frac{ |a_\xi(T^{-1})| }{ L( 1, \sym^2 \pi ) } \sum_{\eta > 0} | \lambda_\pi(\eta) \lambda_\pi( \eta + \xi )|  \prod_{i \le r_1} \frac{ 1 }{ k_i } \left( h\left( \frac{T (k_i-1) }{ 4\pi |\eta_i \kappa_i| } \right) + O \left( k_i^\epsilon ( T/ |\eta_i|)^{1+\epsilon} \right) \right) \\
\times \prod_{i > r_1} \left( \frac{1}{ k_i (k_i - j_i) j_i } h\left( \frac{T \sqrt{ j_i(k_i-j_i) } }{ 2\pi |\eta_i \kappa_i| } \right)   + O \left( k_i^{-3/2 + \epsilon} ( T/ |\eta_i|)^{2+\epsilon} \right) \right).
\end{multline*}

As in the totally real case we may use the bound $| \lambda_\pi(\eta) | \ll N\eta^\epsilon$ to show that the contribution to the sum from all error terms is $O( |J|^{-1} \| k \|^{-\nu/2 + \epsilon} T^{n+\epsilon} )$, so our upper bound may be rewritten

\begin{multline*}
S_{\xi,j}(T) \ll \frac{ |a_\xi(T^{-1})| }{ L( 1, \sym^2 \pi ) } \Biggl( \sum_{\eta > 0} | \lambda_\pi(\eta) \lambda_\pi( \eta + \xi )|  \prod_{i \le r_1} \frac{ 1 }{ k_i } h\left( \frac{T (k_i-1) }{ 4\pi |\eta_i \kappa_i| } \right) \\
\times \prod_{i > r_1} \frac{1}{ k_i (k_i - j_i) j_i }  h\left( \frac{T \sqrt{ j_i(k_i-j_i) } }{ 2\pi |\eta_i \kappa_i| } \right) + O( |J|^{-1} \| k \|^{-\nu/2 + \epsilon} T^{n+\epsilon} ) \Biggr).
\end{multline*}

On summing over $j$ it can be seen that we have proven the inequalities (\ref{unfoldmixed}) and (\ref{unfoldj}), where the terms for $j \in J_1$ are absorbed into the error term.

\subsection{Extracting Shifted Convolution Sums: The Zero Shift}
\label{mixedshift2}

Having dealt with the `error' terms with $\xi \neq 0$, it remains to prove (\ref{eisintmixed}) and (\ref{Rboundmixed}) by considering the `main' term $S_0(T)$, which by (\ref{constantterm}) reduces to studying the integral $I_1(T)$ as in the totally real case.  Squaring out $|F_k(z)|^2$ and integrating in $x$, we obtain

\begin{equation*}
I_1(T) = 2^{-r_2} \sqrt{ |D| } \sum_{\eta>0} |a_f(\eta)|^2 \int_{\R_+^r} g(Ty) | {\bf K}_k(\eta \kappa y)|^2 Ny^{-1} dy^\times.
\end{equation*}

Applying the normalisation of $a_f(\eta)$, we have

\begin{equation*}
I_1(T) = 2^{-r_2} \sqrt{ |D| } |a_f(1)|^2 N\kappa \sum_{\eta>0} |\lambda_\pi(\eta)|^2  \int_{\R_+^r} g(Ty) N(\eta \kappa y)^{-1} | {\bf K}_k(\eta \kappa y)|^2  dy^\times.
\end{equation*}

As with the non-zero shifts, we may expand this into a sum over the multi-indices $j \in J$, and denote the $j$th term by $I_{1,j}(T)$.  If we define the symmetrised functions $\widetilde{g}$ and $\widetilde{\psi}_j$ by

\begin{equation*}
\widetilde{g}(y) = \sum_{u \in \OO^+_\times} g(uy), \quad \widetilde{\psi}_j(y) = Ny^{-1} \sum_{u \in \OO^+_\times} | K_{k,j}(uy) |^2,
\end{equation*}

then $I_{1,j}(T)$ may be expressed as

\begin{eqnarray*}
I_{1,j}(T) & = & 2^{-r_2} \omega_+ \sqrt{ |D| } |a_f(1)|^2 N\kappa \sum_{ (\eta) >0} |\lambda_\pi(\eta)|^2  \int_{\R_+^r} g(Ty) \widetilde{\psi}_j (\eta \kappa y)  dy^\times \\
 & = & 2^{-r_2} \omega_+ \sqrt{ |D| } |a_f(1)|^2 N\kappa \sum_{ (\eta) >0} |\lambda_\pi(\eta)|^2  \int_{\R_+^r / \OO^+_\times } \widetilde{g}(Ty) \widetilde{\psi}_j (\eta \kappa y)  dy^\times.
\end{eqnarray*}

Note that the factor of $\omega_+$ arises because the quotient of $\OO^+$ by $\OO_+^\times$ contains each ideal with this multiplicity.  If we let $G(s,m)$ be the Mellin transform of $\widetilde{g}$ as in (\ref{cuspmellin}) then Mellin inversion gives

\begin{multline*}
\int_{\R_+^r / \OO^+_\times } \widetilde{g}(Ty) \widetilde{\psi}_j (\eta \kappa y)  dy^\times = \\
 \frac{1}{2\pi i V_c} \sum_m \int_{(\sigma)} G(-s,-m) \left( \frac{T}{4\pi} \right)^{ns} N(\eta\kappa)^{-s} \lambda_{-m}(\eta \kappa) \Gamma(k,j,s,m) ds,
\end{multline*}

where $\Gamma(k,j,s,m)$ is the Mellin transform of $\widetilde{\psi}_j$ and is given by

\begin{multline*}
\Gamma(k,j,s,m) = \prod_{i \le r_1} ( 4\pi )^{-k_i + 1} \Gamma( s + \beta(m,i) + k_i -1) \\
 \times \prod_{i > r_1} (2\pi)^{-k_i} \binom{k_i}{j_i} 2^{2s + \beta(m,i)} \frac{ \Gamma( s + ( \beta(m,i) + k_i )/2 )^2 }{ 8 \Gamma( 2s + \beta(m,i) + k_i ) } \\
 \Gamma( s + \beta(m,i)/2 + j_i) \Gamma( s + \beta(m,i)/2 + k_i - j_i).
\end{multline*}

Substituting into $I_{1,j}(T)$ and forming the $L$-function from the sum over $\eta$, we have

\begin{multline*}
I_{1,j}(T) = \frac{ \omega_+ \sqrt{|D|} }{ 2^{r_2} V_c} |a_f(1)|^2 \frac{1}{2\pi i} \sum_m \int_{(\sigma)} G(-s,-m) \left( \frac{T}{4\pi} \right)^{ns} N\kappa^{1-s} \lambda_{-m}(\kappa) \\
L( s, \sym^2 \pi \otimes \lambda_{-m} ) \frac{ L( s, \lambda_{-m} ) }{ L( 2s, \lambda_{-2m} ) } \Gamma(k,j,s,m) ds.
\end{multline*}

We now substitute the value of $|a_f(1)|^2$ and shift the line of integration to $\sigma = 1/2$, giving

\begin{equation}
\label{main1}
I_{1,j}(T) = \frac{C_g T^n}{ |J| } + E_{1/2,j}(T)
\end{equation}

with

\begin{multline*}
E_{1/2,j}(T) \ll \frac{1}{ L( 1, \sym^2 \pi ) } \prod_{i \le r_1} \frac{ (4\pi)^{k_i} }{ \Gamma(k_i) } \prod_{i > r_1} \frac{ (2\pi)^{k_i} }{ \Gamma(k_i/2+1)^2 } \binom{k_i}{j_i} \sum_m \int_{(1/2)} G(-s,-m) \left( \frac{T}{4\pi} \right)^{ns} \\
L( s, \sym^2 \pi \otimes \lambda_{-m} ) \frac{ L( s, \lambda_{-m} ) }{ L( 2s, \lambda_{-2m} ) } \Gamma(k,j,s,m) ds.
\end{multline*}

By Stirling's formula and the rapid decay of $G(s,m)$ this error may be bounded above by

\begin{multline}
\label{Emixed}
E_{1/2,j}(T) \ll \left( \frac{ T^n }{ Nk } \right)^{1/2} \frac{1}{ L( 1, \sym^2 \pi ) } \prod_{i > r_1} \frac{ 1}{ \sqrt{ j_i ( k_i - j_i) } } \\
\sum_m \int_{-\infty}^{+\infty} \frac{ | L( 1/2 + it, \sym^2 \pi \otimes \lambda_{-m} ) | }{ ( |t| + \| m \| + 1 )^A } dt
\end{multline}

for any $A > 0$.  Because $x^{1/2}$ is integrable at $0$,

\begin{equation*}
\sum_{j \in J} \prod_{i > r_1} \frac{ 1}{ \sqrt{ j_i ( k_i - j_i) } }
\end{equation*}

is bounded independently of $k$ so that when we sum (\ref{main1}) and (\ref{Emixed}) over $j$ we obtain

\begin{equation*}
I_1(T) = cT^n + O(T^{n/2} R_k(f)),
\end{equation*}

with

\begin{equation*}
R_k(f) = \frac{1}{ \sqrt{Nk} L( 1, \sym^2 \pi ) } \sum_m \int_{-\infty}^{+\infty} \frac{ | L( 1/2 + it, \sym^2 \pi \otimes \lambda_{-m} ) | }{ ( |t| + \| m \| + 1 )^A } dt.
\end{equation*}

This completes the proof of proposition \ref{Holomixed}.

\section{Application of the Large Sieve}
\label{holosieve}

In this section we complete the proof of theorem \ref{Home} by establishing the bounds of proposition \ref{shiftbound} for the shifted sums

\begin{equation*}
C_{\xi}(x) = \sum_{\eta \le x} | \lambda_1(\eta) \lambda_2(\eta+\xi)|,
\end{equation*}

where $\lambda_i$ are multiplicative functions on $\OO^+$ satisfying $|\lambda_i(\eta)| \le \tau_m(\eta)$ for some $m$ and $x = (x_i)$ satisfies $x_i \ge \|x\|^\nu$ for some $\nu > 0$.  We first rearrange and partition the sums into pieces which may be treated either by elementary methods or by a large sieve.  We assume that $0 < \| \xi \| \le  \| x \|^\nu$, and given $\epsilon > 0$ we will be working throughout with a choice of variables satisfying

\begin{eqnarray}
\label{sievez}
z & = & \| x \|^{1/s} \quad \text{with } s = \epsilon \log \log x, \\
y & = & \| x \|^\epsilon.
\end{eqnarray}

We factorise the ideals $(\eta)$ and $(\eta + \xi)$ as

\begin{equation*}
(\eta) = \Aa \Bb \quad \text{and} \quad (\eta + \xi) = \Aa_\xi \Bb_\xi
\end{equation*}

in such a way that for every prime ideal $\p$ dividing $\eta(\eta+\xi)$,

\begin{equation*}
\p | \Aa \Aa_\xi \Rightarrow N\p \le z \quad \text{and} \quad \p | \Bb \Bb_\xi \Rightarrow N\p > z,
\end{equation*}

and partition the sum $C_\xi(x)$ into parts depending on the norm of $\Aa$ and $\Aa_\xi$.  We denote by $C^y(x)$ the part of $C_\xi(x)$ in which either $N \Aa$ or $N \Aa_\xi$ is greater than $y$,

\begin{equation*}
C^y(x) = \sum_{\substack{ \eta \le x \\ N\Aa > y }} | \lambda_1(\eta) \lambda_2(\eta+\xi)| + \sum_{\substack{ \eta \le x \\ N\Aa_\xi > y }} | \lambda_1(\eta) \lambda_2(\eta+\xi)|,
\end{equation*}

and the part where both $N \Aa$ and $N \Aa_\xi$ are less than or equal to $y$ we denote by $C_y(x)$,

\begin{equation*}
C_y(x) = \sum_{\substack{ \eta \le x \\ N\Aa, N\Aa_\xi \le y }} | \lambda_1(\eta) \lambda_2(\eta+\xi)|,
\end{equation*}

so that $C_\xi(x) = C^y(x) + C_y(x)$.

\subsection{Treating $C^y(x)$ by Elementary Methods}

We first handle the terms with $N\Aa$ or $N \Aa_\xi$ large.  We begin by applying H\"older's inequality and $|\lambda_i(\eta)| \le \tau_m(\eta)$ to get

\begin{equation*}
C^y(x) \ll \left( \sum_{\substack{ \eta \le x \\ N\Aa > y }} 1 \right)^{1/2} \left( \sum_{\eta \le x + \| \xi \| } \tau^4_m(\eta) \right)^{1/2}.
\end{equation*} 

We know that $x + \| \xi \| \le 2x$ by our assumption on $\xi$, and have the bound

\begin{equation*}
\sum_{\eta \le 2x} \tau^4_m(\eta) \ll Nx (\log \| x \| )^A
\end{equation*}

for some $A$.  As all prime factors of $N \Aa$ must be at most $z$, we may use a Rankin's method argument (\cite{MVa}, Thm. 7.6) to bound the number of allowable values of $N \Aa$ up to $t$ by $t(\log t)^{-A}$ for all $A$.  Combined with a bound of $\ll (\log t)^n$ for the number of $\Aa$ with norm $t$, we see that the number of choices for $\Aa$ with $N \Aa \le t$ is $\ll t(\log t)^{-A}$.  Partial summation and our choice of $x, y$ and $z$ then gives the bound

\begin{equation*}
\sum_{\substack{ (\eta): N\eta \le Nx \\ N\Aa > y }} 1 \ll \frac{Nx}{ (\log \| x \| )^A }
\end{equation*}

for any $A$, and the upper bound of $(\log \| x \|)^{n-1}$ for the number of $\eta \le x$ generating a given $(\eta)$ lets us conclude

\begin{equation}
\label{C^y}
C^y(x) \le \frac{Nx}{ (\log \| x \| )^2}.
\end{equation}

\subsection{Treating $C_y(x)$ by the Large Sieve}

From our definition of $C_y(x)$, we are left with evaluating

\begin{equation*}
C_y(x) \ll \sum_{\substack{N \Aa, N \Aa_\xi \le y \\ \p | \Aa \Aa_\xi \Rightarrow N\p \le z}} | \lambda_1( \Aa) \lambda_2( \Aa_\xi)| \sum_{\substack{\eta \le x \\ \eta \equiv 0 \; (\Aa) \\ \eta \equiv -\xi \; (\Aa_\xi) \\ \p | \Bb \Bb_\xi \Rightarrow N\p > z}} | \lambda_1( \Bb) \lambda_2( \Bb_\xi)|.
\end{equation*}

To help deal with certain co-primality conditions which come up during our analysis, we pull out the greatest common divisor $\mathfrak{v}$ of $\Aa$ and $\Aa_\xi$, which we choose to have a normalised positive generator $v$.  Writing $\eta_v = \eta/v$ and $\eta_v + w= (\eta + \xi)/v$, we again factorise $(\eta_v)$ and $(\eta_v+w)$ with $(\Aa,\Aa_\xi) = (\Aa \Aa_\xi, w) = \OO$, so that

\begin{equation}
\label{C_y}
C_y(x) \ll \sum_{\substack{ vw = \xi \\ v \text{ normalised} }} \sum_{\substack{N \Aa, N \Aa_\xi \le y / N \mathfrak{v} \\ \p | \Aa \Aa_\xi \Rightarrow N\p \le z \\ (\Aa,\Aa_\xi) = (\Aa \Aa_\xi, w) = \OO }} | \lambda_1( \mathfrak{v} \Aa) \lambda_2( \mathfrak{v} \Aa_\xi)| \sum_{\substack{\eta_v \ll x/ \| v \| \\ \eta_v \equiv r \; (\Aa \Aa_\xi) \\ \p | \Bb \Bb_\xi \Rightarrow N\p > z}} | \lambda_1( \Bb) \lambda_2( \Bb_\xi)|.
\end{equation}

Here we applied the Chinese remainder theorem so that the residue class $r$ in the innermost sum satisfies $r \equiv 0 \; (a)$ and $r \equiv -w \; (a_\xi)$.  The Ramanujan-Petersson conjecture and our choice of $s$ in (\ref{sievez}) imply that $| \lambda_1( \Bb) \lambda_2( \Bb_\xi)| \ll (\log \| x \|)^{2m \epsilon}$, as we have $|\lambda_1(\p^\alpha)| \le \tau_m(\p^\alpha) \le 2^{\alpha + m -1}$ and $\Bb = \p_1^{\alpha_1} \ldots \p_t^{\alpha_t}$ with $\alpha_1 + \ldots +\alpha_t \le s$.  We may therefore substitute this, and proceed to bound the count

\begin{equation}
\label{sieve1}
\sum_{\substack{\eta_v \ll x/ \| v \| \\ \eta_v \equiv r \; (\Aa \Aa_\xi) \\ \p | \Bb \Bb_\xi \Rightarrow N\p > z}} 1.
\end{equation}

Choose normalised generators $a$ and $a_\xi$ for $\Aa$ and $\Aa_\xi$, which will satisfy $\| a \|, \| a_\xi \| \ll y^{1/n} / \| v \|$.  Writing $\eta_v = a a_\xi m + r$ with $r$ chosen in a negative fundamental domain for $\F / (a a_\xi)$ (so that $a a_\xi m \ge 0$), we note the following equivalences between divisibility conditions for primes with $N\p \le z$ :

\begin{eqnarray*}
\p \! \not| \, b \: \, & \Longleftrightarrow & \p \! \not| \, (a_\xi m + r/a), \\
\p \! \not| \, b_\xi & \Longleftrightarrow & \p \! \not| \, (a m + (r+w)/a_\xi).
\end{eqnarray*}

For fixed normalised $a$ and $a_\xi$ satisfyuing $(a, a_\xi) = (a a_\xi, w) = \OO$ and $\| a \|, \| a_\xi \| \ll y^{1/n} / \| v \|$, we see that the count in (\ref{sieve1}) is bounded by $S = | \Ss( \Mm, \Pp, \Omega ) |$ where we define $\Ss( \Mm, \Pp, \Omega )$ to be the `sifted set,'

\begin{equation*}
\Ss( \Mm, \Pp, \Omega ) = \{ m \in \Mm \; | \; m \; (\text{mod } \p) \notin \Omega_\p \text{ for all } \p \in \Pp \}.
\end{equation*}

Here,

\begin{eqnarray*}
\Mm & = & \{ m \in \OO \; | \; 0 < m v a a_\xi \ll x \}, \\
\Pp & = & \{ \p \; | \; 2 < N\p \le z \},
\end{eqnarray*}

and the set $\Omega = \bigcup_{\p \in \Pp} \Omega_\p$ of residue classes to be `sieved out' is given by

\begin{equation*}
\Omega_\p = \Biggl\{ \begin{array}{ll} \{ r_1 \; (\text{mod } \p) \} & \text{for } \p | a \\
\{ r_2 \; (\text{mod } \p) \} & \text{for } \p | a_\xi \\
\{ r_1, r_2 \; (\text{mod } \p) \} & \text{for } \p \not| aa_\xi, \\
\end{array} 
\end{equation*}

where $r_1 \equiv -\overline{a_\xi} r /a \; (\p)$ and $r_2 \equiv -\overline{a} (r+w) /a_\xi \; (\p)$.  Here the overline means multiplicative inverse mod $\p$.

We now apply a variant of the standard large sieve for the lattice $\Z^n$.  Let $d = (d_i)$ with $d_i > \| d \|^\nu$ for some $\nu > 0$, let $B(d)$ be the box with dimensions $d$ centred at the origin in $\R^n$, and $D(d)$ be the image of $B(d)$ under any rotation.  If $\Pp$ is a set of rational primes, define $\Omega_p$ to be a subset of $L / pL$ of cardinality $\omega(p)$ for each $p \in \Pp$, and define a sifted set $\Ss( \mathcal{L}, \Pp, \Omega )$ by

\begin{eqnarray*}
\Ss( \mathcal{L}, \Pp, \Omega ) & = & \{ m \in \mathcal{L} ; m \; (\text{mod } p) \notin \Omega_\p \text{ for all } p \in \Pp \}, \\
\text{with } \mathcal{L} & = & \Z^n \cap D(d).
\end{eqnarray*}

We than have

\begin{equation*}
| \Ss( \mathcal{L}, \Pp, \Omega ) | \ll_\nu \frac{ Nd + Q^{2n} }{ H }
\end{equation*}

for any $\| d \|^{\nu/2} \ge Q \ge 1$, where

\begin{equation*}
H = \sum_{q \le Q} h(q)
\end{equation*}

and $h(q)$ is the multiplicative function supported on squarefree integers with prime divisors in $\Pp$ such that

\begin{equation*}
h(p) = \frac{ \omega(p) }{ p^n - \omega(p) }.
\end{equation*}

This form of the large sieve may be proven using soft techniques of Poisson summation, described in chapter 7 of \cite{IK}.  To apply this in the number field, identify $\OO$ with $\Z^n$ and for each $p$, construct a set $\Omega_p$ from the $\Omega_\p$ with $\p | p$ using the Chinese remainder theorem.  We then have

\begin{equation*}
\omega(p) \ge (\alpha_p + \beta_p) p^{n-1} + O(p^{n-2}),
\end{equation*}

where $\alpha_p$ is the number of degree 1 primes above $p$ and $\beta_p$ is the number which do not divide $a a_\xi$.  We then have the lower bound

\begin{equation*}
H \gg (\log z)^2 \prod_{\p | a a_\xi} \left( 1 - \frac{1}{N \p} \right),
\end{equation*}

so that the count (\ref{sieve1}) is bounded by

\begin{equation*}
\ll \frac{ Nx}{ (\log z)^2 N(v a a_\xi) } \prod_{\p | a a_\xi} \left( 1 - \frac{1}{N \p} \right)^{-1}.
\end{equation*}

Plugging this back into (\ref{C_y}), we obtain

\begin{equation*}
C^y(x) \ll \frac{ ( \log \|x\| )^{2m \epsilon} Nx }{ (\log z)^2 } \sum_{\substack{ vw = \xi \\ v \text{ normalised} }} \sum_{\substack{N \Aa, N \Aa_\xi \le y / N \mathfrak{v} \\ \p | \Aa \Aa_\xi \Rightarrow N\p \le z \\ (\Aa,\Aa_\xi) = (\Aa \Aa_\xi, w) = \OO }} \frac{ | \lambda_1( \mathfrak{v} \Aa) \lambda_2( \mathfrak{v} \Aa_\xi)| }{N(v a a_\xi) } \prod_{\p | a a_\xi} \left( 1 - \frac{1}{N \p} \right)^{-1}.
\end{equation*}

For each $v$, we may bound the inner sum from above by an Euler product.  If $\p \! \not| \, v$, the corresponding term is

\begin{equation}
\label{Euler1}
1 + \frac{| \lambda_1(\p)| + | \lambda_2(\p)|}{N\p} + O( N\p^{-2+\epsilon} )
\end{equation}

by our bounds on $|\lambda_i(\p)|$, and if $\p | v$ it is

\begin{equation}
\label{Euler2}
\frac{ | \lambda_1(\p) \lambda_2(\p) | }{ N\p } + O( N\p^{-2+\epsilon} ).
\end{equation}

(\ref{Euler2}) is at most 1 for almost all $\p$, and so for any $v$ we may bound the inner sum by

\begin{equation*}
\ll \prod_{N\p \le z} \left( 1 + \frac{| \lambda_1(\p)| + | \lambda_2(\p)|}{N\p} \right).
\end{equation*}

This gives the bound

\begin{equation*}
C^y(x) \ll \frac{ \tau(\xi) Nx}{ (\log \| x \|)^{2-\epsilon} } \prod_{N\p \le z} \left( 1 + \frac{| \lambda_1(\p)| + | \lambda_2(\p)|}{N\p} \right)
\end{equation*}

for $C^y(x)$, and when combined with our partition of $C_\xi(x)$ and the bound (\ref{C^y}) this concludes the proof of proposition \ref{shiftbound}.

\section{Proof of Theorem \ref{Some} }
\label{someproof}

In this section we shall prove theorem \ref{Some} by extending Soundararajan's approach of weak subconvexity to a number field.  We prove the necessary triple product identities in section \ref{weighttripprod}, before showing that the triple product $L$ functions which appear satisfy the hypotheses of Soundararajan's theorem in section \ref{weightsubcon}.

\subsection{Triple Products}
\label{weighttripprod}

Throughout this section, $C$ will denote a constant depending only on $F$ which may vary from equation to equation.  We shall also let $\sigma$ denote the conjugate linear automorphism of $\pi$ corresponding to complex conjugation on $X$, which has the property that $\langle \sigma(u), \sigma(v) \rangle = \overline{ \langle u, v \rangle}$.  We begin with the following triple product identity in the case of $\phi$ a Hecke-Maass cusp form.

\begin{prop}
Let $\phi$ be a Hecke-Maass cusp form with associated automorphic representation $\pi'$.  Then

\begin{eqnarray}
\label{weightprod1}
| \langle \phi F_k, F_k \rangle |^2 & = & C \frac{ \Lambda( \tfrac{1}{2}, \pi \otimes \pi \otimes \pi' ) }{ \Lambda( 1, \sym^2 \pi )^2 \Lambda(1, \sym^2 \pi' ) } \\
\label{weightprod2}
& \sim_\phi & Nk^{-1} \frac{ L( \tfrac{1}{2}, \sym^2 \pi \otimes \pi' ) }{ L( 1, \sym^2 \pi )^2 }
\end{eqnarray}

where $\sim_\phi$ means that the ratio of the two quantities is bounded between two positive constants depending only on $\phi$.
\end{prop}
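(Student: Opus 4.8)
The plan is to derive the exact identity (\ref{weightprod1}) from Ichino's triple product formula \cite{I}, and then to deduce the asymptotic (\ref{weightprod2}) by factoring the degree-eight $L$-function and estimating the archimedean $\Gamma$-factors by Stirling's formula.

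First I would rewrite $\langle\phi F_k,F_k\rangle$ as a genuine automorphic triple product. Since the $K$-integrals of $|R_\pi(v_t)|^2$ are independent of $t$, we have $\langle\phi F_k,F_k\rangle = \langle\phi\,|R_\pi(v_k)|^2\rangle = \int_X \phi\cdot R_\pi(v_k)\,\overline{R_\pi(v_k)}\,dx$ with $v_k$ the highest weight vector, where $R_\pi(v_k)$ lifts adelically to a pure tensor in $\pi$ (the spherical new vector at each finite place, the lowest weight vector of the holomorphic discrete series at each real place, and the highest weight vector of the lowest $K$-type at each complex place), $\overline{R_\pi(v_k)}$ lies in the contragredient $\widetilde\pi\cong\pi$ via the conjugation $\sigma$ of the preamble, and $\phi$ lifts to a pure tensor in $\pi'$ that is spherical at every finite place. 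Applying Ichino's formula to these test vectors gives
\begin{equation*}
|\langle\phi F_k,F_k\rangle|^2 = C_0\,\frac{\Lambda(\tfrac12,\pi\otimes\pi\otimes\pi')}{\Lambda(1,\sym^2\pi)^2\,\Lambda(1,\sym^2\pi')}\prod_v I_v^*,
\end{equation*}
where $C_0$ is an explicit constant and $I_v^*$ is the normalised local trilinear period (the local integral of matrix coefficients divided by the corresponding quotient of local $L$-factors); since all three representations are unramified at every finite place and we have chosen new vectors there, Ichino's unramified computation gives $I_v^* = 1$ at all finite $v$, so only the archimedean factors remain.

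At a real place $\pi_v$ is a holomorphic discrete series of weight $k_v$ and $\pi'_v$ is a fixed principal series, and $I_v^*$ is precisely the normalised local integral occurring in Watson's formula \cite{W}, which he evaluates to a positive constant independent of $k_v$. At a complex place $\pi_v$ has weight $k_v$ and spectral parameter $0$, and the corresponding $I_v^*$ is computed in \cite{Ma2} using Michel--Venkatesh \cite{MV}, again to a constant independent of $k_v$. Folding $C_0$ and $\prod_{v\mid\infty}I_v^*$ into one constant $C$ depending only on $F$ then yields (\ref{weightprod1}). This archimedean evaluation is the main obstacle: for $C$ to be independent of the weight one needs the \emph{exact} values of the $I_v^*$, not merely their size; at real places this is handed to us by comparison with Watson, but at complex places it genuinely requires the matrix-coefficient computation of \cite{Ma2}, \cite{MV}.

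Finally, for (\ref{weightprod2}) I would use that, for $GL_2$ with trivial central character, $L(s,\pi\times\pi) = \zeta_F(s)L(s,\sym^2\pi)$, whence $\Lambda(s,\pi\otimes\pi\otimes\pi') = \Lambda(s,\sym^2\pi\otimes\pi')\,\Lambda(s,\pi')$. Substituting at $s=\tfrac12$ and splitting each completed $L$-function into archimedean and finite parts, all factors involving only $\pi'$ — namely $\Lambda(\tfrac12,\pi')$, $L(\tfrac12,\pi')$, $\Lambda(1,\sym^2\pi')$, $L(1,\sym^2\pi')$ and the archimedean $\pi'$-gamma factors — depend only on $\phi$ and are positive (for $L(\tfrac12,\pi')$ one uses nonnegativity of the central value of the self-dual $GL_2/F$ $L$-function of $\phi$; if it vanished, (\ref{weightprod1}) would force $\langle\phi F_k,F_k\rangle\equiv0$, which we exclude). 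There remains the Stirling estimate $\Lambda_\infty(\tfrac12,\sym^2\pi\otimes\pi')/\Lambda_\infty(1,\sym^2\pi)^2 \asymp_\phi Nk^{-1}$: at a real place the factor of $\sym^2\pi_v$ is $\Gamma_\R(s+1)\Gamma_\C(s+k_v-1)$ and that of $\sym^2\pi_v\otimes\pi'_v$ is dominated by two $\Gamma_\C$-factors with argument $\asymp k_v$, so the ratio is $\asymp k_v^{-1}$, while at a complex place the same bookkeeping (done in \cite{Ma2}) gives $\asymp k_v^{-2}$; since $\prod_{v\mid\infty}k_v^{-\delta_v}=Nk^{-1}$, combining everything gives (\ref{weightprod2}).
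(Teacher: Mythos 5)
Your proposal is correct and follows essentially the same route as the paper: apply Ichino's triple product formula, use that finite places are unramified to reduce to archimedean local integrals, evaluate these by comparison with Watson at real places and by the computation in \cite{Ma2} at complex places to get (\ref{weightprod1}), and then factor $\Lambda(\tfrac12,\pi\otimes\pi\otimes\pi')=\Lambda(\tfrac12,\sym^2\pi\otimes\pi')\Lambda(\tfrac12,\pi')$ and estimate the archimedean quotient by Stirling to obtain (\ref{weightprod2}). You are slightly more explicit than the paper about packaging the local periods as normalised constants $I_v^*$, about separating the $\pi'$-only factors into the $\sim_\phi$ constant, and about the degenerate case $L(\tfrac12,\pi')=0$, but the substance is identical.
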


\begin{proof}
Because $|F_k|^2 dv$ is the pushforward of $|R_\pi(v_k)|^2 dx$, the inner product $\langle \phi F_k, F_k \rangle$ is equal to

\begin{equation*}
\int_X |R_\pi(v_k)|^2 \phi dx = \int_X R_\pi(v_k) R_\pi(\sigma(v_k)) \phi dx,
\end{equation*}

and we may evaluate the RHS of this expression using Ichino's formula.  Let $I = \otimes I_i$ and $I' = \otimes I_i'$ be the products of the Archimedean local factors of $\pi$ and $\pi'$, $k_i$ and $r_i'$ be the relevant parameters of these local factors, and $u \in I'$ be the unit spherical vector.  As all our vectors are unramified and our division algebra is split, the statement of Ichino's formula in this case is

\begin{multline}
\label{weightprod3}
\left| \int_X R_\pi(v_k) R_\pi(\sigma(v_k)) \phi dx \right|^2 = C \prod_{i=1}^r \int_{ \overline{G_i} } \langle I_i(g) v_{k_i}, v_{k_i} \rangle \langle I_i(g) v_{-k_i}, v_{-k_i} \rangle \langle I'_i(g) u_i, u_i \rangle d\overline{g_i}  \\
\frac{ L( \tfrac{1}{2}, \pi \otimes \pi \otimes \pi' ) }{ L( 1, \sym^2 \pi )^2 L(1, \sym^2 \pi' ) },
\end{multline}

where $v_k = \otimes v_{k_i}$ and $u = \otimes u_i$.  If $\nu_i$ is a complex place the $i$th local integral appearing in the product was computed in \cite{Ma2} to be

\begin{equation}
\label{cxgamma}
C \frac{ \Gamma \left( \tfrac{ 1 + k_i \pm ir'_i}{2} \right)^2 \Gamma \left( \tfrac{ 1 \pm ir'_i}{2} \right)^2 }{ \Gamma( 1 + \tfrac{k_i}{2} )^4 \Gamma(1 \pm ir'_i)^2 },
\end{equation}

and up to an absolute constant this is equal to the ratio of the Archimedean factors at the place $\nu_i$ of the $L$ functions appearing in (\ref{weightprod3}).  In the real case the local integral may be determined by comparison with Watson's formula, and is

\begin{equation}
\label{realgamma}
C \frac{ \Gamma_\R ( k_i - 1/2 \pm ir'_i ) \Gamma_\R ( k_i + 1/2 \pm ir'_i ) \Gamma_\R ( 1/2 \pm ir'_i ) \Gamma_\R ( 3/2 \pm ir'_i ) }{ \Gamma_\R ( k_i - 1/2 )^2 \Gamma_\R ( k_i + 1/2 )^2 \Gamma_\R ( 1/2 \pm 2i r'_i) }.
\end{equation}

This is again proportional to the relevant Archimedean factors of the $L$ functions appearing in (\ref{weightprod3}), which gives (\ref{weightprod1}).  Finally, the Archimedean factors (\ref{realgamma}) and (\ref{cxgamma}) have the asymptotic behaviours $k_i^{-1}$ and $k_i^{-2}$ as $k_i \rightarrow \infty$, which gives (\ref{weightprod2}).

\end{proof}

We now treat the inner products $\langle E(s,m, \cdot) F_k, F_k \rangle$ against spherical Eisenstein series by unfolding, to obtain the following formula.

\begin{prop}

\begin{eqnarray}
\label{weightprodeis1}
| \langle E(s, m, \cdot) F_k, F_k \rangle | & = & C \left| \frac{ \Lambda( 1/2 + it, \pi \otimes \pi \otimes \lambda_{-m} ) }{ \Lambda( 1, \sym^2 \pi ) \Lambda( 1 + 2it, \lambda_{-2m} ) } \right| \\
\label{weightprodeis3}
| \langle E(s, m, \cdot) F_k, F_k \rangle | & \ll & \frac{ (1 + |t| + \| m \| )^{n/4 + \epsilon} }{ Nk } \left| \frac{ L( 1/2 + it, \sym^2 \pi\otimes \lambda_{-m} ) }{ L( 1, \sym^2 \pi ) } \right|.
\end{eqnarray}

\end{prop}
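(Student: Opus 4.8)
The plan is to evaluate $\langle E(s,m,\cdot)F_k,F_k\rangle$ by unfolding, in the manner of the classical Rankin--Selberg integral; since $E(s,m,\cdot)$ is not cuspidal the triple product formula of Ichino is unavailable, but in its place the unfolding is entirely elementary. One takes $s=\tfrac12+it$, works first in the half-plane $\Real s>1$ where the Eisenstein series converges absolutely, and continues meromorphically at the end.

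First I would use that $|F_k|^2\,dv$ descends to $Y$ and that $E(s,m,\cdot)$ is $K$-invariant, so $\langle E(s,m,\cdot)F_k,F_k\rangle=\int_Y E(s,m,z)|F_k(z)|^2\,dv$, and unfold $E(s,m,\cdot)$ against the $\Gamma$-invariant density $|F_k(z)|^2\,dv$ to obtain $\int_{\Gamma_\infty\backslash\HH_F}Ny^s\lambda_m(y)|F_k(z)|^2\,dv$. Inserting the Fourier expansion $F_k(z)=\sum_{\eta>0}a_f(\eta){\bf K}_k(\eta\kappa y)e(\tr(\eta\kappa x))$, squaring out $|F_k|^2$ and integrating over $x\in\F/\OO$ annihilates all off-diagonal terms, and after the change of variable $y\mapsto y/(\eta\kappa)$ together with the symmetrisation over the totally positive units contained in $\Gamma_\infty$ the sum over $\eta$ turns into a sum over integral ideals. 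The outcome factorises as a Dirichlet series over the ideals times the Archimedean integral $\int_{\R_+^r}Ny^s\lambda_m(y)|{\bf K}_k(y)|^2\,Ny^{-1}\,dy^\times$, exactly parallel to the $\xi=0$ computation carried out in Section \ref{mixedshift2}.

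For the finite part, the relation $a_f(\eta)=\lambda_\pi(\eta)N\eta^{-1/2}a_f(1)$ and the Hecke relations identify $\sum_{(\eta)}|\lambda_\pi(\eta)|^2 N(\eta\kappa)^{-s}\lambda_{-m}(\eta\kappa)$ with $L(s,\sym^2\pi\otimes\lambda_{-m})L(s,\lambda_{-m})/L(2s,\lambda_{-2m})$, and since $\pi$ has trivial central character one has $L(s,\pi\otimes\pi\otimes\lambda_{-m})=L(s,\lambda_{-m})L(s,\sym^2\pi\otimes\lambda_{-m})$, so this equals $L(s,\pi\otimes\pi\otimes\lambda_{-m})/L(2s,\lambda_{-2m})$. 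For the Archimedean part, the integral splits over the infinite places: at a real place it is the elementary Gamma integral $\int_0^\infty y^{s+\beta(m,i)+k_i-2}e^{-4\pi y}\,dy$ arising from (\ref{whittaker1}), while at a complex place one substitutes (\ref{whittaker2}), applies the integral formula (\ref{bessel}) to each term of the sum defining $|{\bf K}_i|^2$, and then collapses the resulting sum $\sum_j\binom{k_i}{j}(\cdots)$ by Vandermonde's identity to a clean ratio of Gamma functions. Multiplying these ratios by the constant $|a_f(1)|^2$ of (\ref{firstfourier}), whose denominator supplies precisely one factor of $L(1,\sym^2\pi)$, and recognising the total product of Gamma functions as the Archimedean factors of $\Lambda(s,\pi\otimes\pi\otimes\lambda_{-m})$, $\Lambda(1,\sym^2\pi)^{-1}$ and $\Lambda(2s,\lambda_{-2m})^{-1}$, one arrives at (\ref{weightprodeis1}); the continuation to $s=\tfrac12+it$ is then automatic since both the Dirichlet series and the Archimedean integral continue.

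Finally, (\ref{weightprodeis3}) follows from (\ref{weightprodeis1}) by passing to the finite $L$-factors: bound the extraneous factor $L(\tfrac12+it,\lambda_{-m})$ by the convexity estimate $\ll(1+|t|+\|m\|)^{n/4+\epsilon}$ (a quarter per infinite place), bound $L(1+2it,\lambda_{-2m})^{-1}$ by the standard zero-free-region estimate $\ll(1+|t|+\|m\|)^{\epsilon}$, keep $L(\tfrac12+it,\sym^2\pi\otimes\lambda_{-m})$ and $L(1,\sym^2\pi)$ as they are, and read off the asserted decay in $Nk$ from the remaining ratio of Gamma functions by Stirling's formula, using that all the exponentially decaying factors in $t$ and $\|m\|$ among the Gamma functions cancel against each other. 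The step I expect to be the main obstacle is the Archimedean computation at the complex places: one must apply (\ref{bessel}) and Vandermonde's identity carefully enough to obtain the Gamma factors exactly on the nose, and then match them term by term against the Archimedean local factors of the completed $L$-functions, keeping precise track of the powers of $2$, of $\pi$, of the discriminant, and of the $F$-dependent constant $C$ that enter through the normalisation of the measures and of the Whittaker functions.
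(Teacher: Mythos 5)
Your proposal is correct and follows essentially the same route as the paper: unfold $E(s,m,\cdot)$ against $|F_k|^2 dv$, square out the Fourier expansion, change variables and symmetrise over units to form $L(s,\pi\otimes\pi\otimes\lambda_{-m})/L(2s,\lambda_{-2m})$, factorise the Archimedean integral over places, match the Gamma factors against those of the completed $L$-functions, and then for the second bound apply Stirling together with the convexity bound on $L(\tfrac12+it,\lambda_{-m})$ and the standard lower bound on $L(1+2it,\lambda_{-2m})$. The one genuine point of departure is the complex-place Archimedean integral: where you propose to evaluate $\Tt_i$ directly by applying (\ref{bessel}) to each term and collapsing the sum $\sum_j\binom{k_i}{j}\Gamma(a+j)\Gamma(a+k_i-j)=\Gamma(a)^2\Gamma(2a+k_i)/\Gamma(2a)$ via the Chu--Vandermonde convolution, the paper instead quotes the value of this integral from \cite{Ma2} (it does, however, carry out exactly this Vandermonde computation in the appendix for the $s=1$ normalisation, so your elementary route is both available and consistent with the paper's own methods). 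Your self-diagnosis of the main risk --- getting the Gamma factors, powers of $2$ and $\pi$, and the constant $C$ exactly right at the complex places --- is accurate; that is indeed where the bookkeeping lies, and is presumably why the paper defers it to a reference.
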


\begin{proof}

For $\text{Re}(s) > 1$, by unfolding and substituting the Fourier expansion of $F_k$ we have

\begin{eqnarray*}
\langle E(s,m, \cdot) F_k, F_k \rangle & = & \int_{\Gamma_\infty \backslash \HH_F} Ny^s \lambda_m(y) |F_k(z) |^2 dv \\
& = & |a_f(1)|^2 \int_{\Gamma_\infty \backslash \HH_F} Ny^s \lambda_m(y) \sum_{\xi \in \OO} |\lambda_\pi(\xi) |^2 N\xi^{-1} |{\bf K}_k( \xi \kappa y) |^2 dv.
\end{eqnarray*}

$\Gamma_\infty \backslash \HH_F \simeq \F / \OO \mu_+ \times \R_+^r / \OO_+^\times$, where $\mu_+$ acts on $\F / \OO$ by multiplication, and the volume of $\F / \OO \mu_+$ is $2^{-r_2} \omega_+^{-1} \sqrt{ |D| }$.  We therefore have

\begin{eqnarray*}
\langle E(s,m, \cdot) F_k, F_k \rangle & = & |a_f(1)|^2 2^{-r_2} \omega_+^{-1} \sqrt{|D|} \int_{ \R_+^r / \OO_+^\times } Ny^s \lambda_m(y) \\
& \quad & \quad \sum_{\xi \in \OO} |\lambda_\pi(\xi) |^2 N\xi^{-1} |{\bf K}_k( \xi \kappa y) |^2 Ny^{-1} dy^\times.
\end{eqnarray*}

Making the change of variable $y \mapsto |\xi \kappa|^{-1} y$ and unfolding the integral over $\OO_+^\times$, this becomes

\begin{eqnarray*}
& = & |a_f(1)|^2 2^{-r_2} \sqrt{|D|} N\kappa \sum_{ (\xi) } |\lambda_\pi(\xi) |^2 (N \xi \kappa)^{-s} \lambda_{-m}(\xi \kappa) \\
& \quad & \quad \int_{ \R_+^r} Ny^s \lambda_m(y) |{\bf K}_k( y) |^2 Ny^{-1} dy^\times \\
& = & |a_f(1)|^2 2^{-r_2} \sqrt{|D|} N\kappa^{1-s} \lambda_{-m}(\kappa) \frac{ L( s, \pi \otimes \pi \otimes \lambda_{-m} ) }{ L( 2s, \lambda_{-2m} ) } \\
& \quad & \quad \int_{ \R_+^r} Ny^s \lambda_m(y) |{\bf K}_k( y) |^2 Ny^{-1} dy^\times.
\end{eqnarray*}

(Note that the factor of $\omega_+^{-1}$ vanished because $\OO / \OO_+^\times$ counts every ideal with this multiplicity.)  We factorise the integral ocurring here, and pair each factor with the corresponding term of $|a_f(1)|^2$ so that

\begin{equation}
\label{weightprodeis2}
\langle E(1/2 + it, m, \cdot) F_k, F_k \rangle = C \frac{ L( 1/2+it, \pi \otimes \pi \otimes \lambda_{-m} ) }{ L(1, \sym^2 \pi) L( 1+2it, \lambda_{-2m} ) } \prod_{i=1}^r \Tt_i,
\end{equation}

where for $i \le r_1$ we have

\begin{equation*}
\Tt_i = \frac{ (4\pi)^{k_i} }{ \Gamma(k_i) } \int_0^\infty y^{1/2 + it + \beta(m,i) } |{\bf K}_i(y) |^2 y^{-1} dy^\times,
\end{equation*}

and for $i > r_1$

\begin{equation*}
\Tt_i = \frac{ (2\pi)^{k_i} }{ \Gamma(k_i/2 + 1)^2 } \int_0^\infty y^{1 + 2it + \beta(m,i) } |{\bf K}_i(y) |^2 y^{-2} dy^\times.
\end{equation*}

The integral at real places may be easily calculated to be

\begin{equation*}
\Tt_i = (4\pi)^{1/2 -it -\beta(m,i) } \frac{ \Gamma( k_i - 1/2 + it + \beta(m,i) ) }{ \Gamma(k_i) },
\end{equation*}

and the integral at complex places was calculated in \cite{Ma2} to have absolute value

\begin{equation*}
\Tt_i = \frac{ \Gamma \left( 1/2 \pm (it + \beta(m,i) ) + k_i/2 \right) \Gamma \left( 1/2 \pm (it + \beta(m,i) ) \right) }{ \Gamma( 1 + \tfrac{k}{2} )^2 | \Gamma(1+ 2it + 2\beta(m,i) )| }.
\end{equation*}

Both of these terms agree in absolute value with the ratio of gamma factors at the corresponding infinite place of the $L$ functions appearing in (\ref{weightprodeis2}), which proves formula (\ref{weightprodeis1}).  To prove (\ref{weightprodeis3}), we use Stirling together with the bound $| \Gamma( \sigma + it)| \le \Gamma(\sigma)$ to show that $| \Tt_i | \ll k_i^{-1/2}$ for $\nu_i$ real and $\Tt_i \ll k_i^{-1} (1 + |t + \beta(m,i)| )^{-1/2} \le k_i^{-1}$ for $\nu_i$ complex.  This gives

\begin{eqnarray*}
| \langle E(1/2 + it, m, \cdot) F_k, F_k \rangle | & \ll &  \left| Nk^{-1/2} \frac{ L( 1/2+it, \pi \otimes \pi \otimes \lambda_{-m} ) }{ L(1, \sym^2 \pi) L( 1+2it, \lambda_{-2m} ) } \right| \\
& = & Nk^{-1/2} \left| \frac{ L( 1/2+it, \sym^2 \pi \otimes \lambda_{-m} ) L( 1/2 + it, \lambda_{-m} ) }{ L(1, \sym^2 \pi) L( 1+2it, \lambda_{-2m} ) } \right|,
\end{eqnarray*}

and applying the convex bound $L( 1/2 + it, \lambda_{-m} ) \ll (1 + |t| + \| m \| )^{n/4 + \epsilon}$ and the lower bound $L( 1+2it, \lambda_{-2m} ) \gg (1 + |t| + \| m \| )^{-\epsilon}$ yields (\ref{weightprodeis3}).

\end{proof}

\subsection{Weak Subconvexity}
\label{weightsubcon}

Having expressed the inner products $\langle \phi F_k, F_k \rangle$ for $\phi$ a Hecke-Maass cusp form or Eisenstein series in terms of $L$ values, we now prove theorem \ref{Some} by applying the weak subconvexity of Soundararajan \cite{So2} to these values.  This is a theorem which is valid for any Dirichlet series $L(s,\pi)$ over the rationals satisfying certain conditions, which we now describe.  The first of these is that $L(s,\pi)$ may be given by an Euler product

\begin{equation*}
L(s,\pi) = \sum_{n=1}^\infty \frac{ a_\pi(n) }{n^s} = \prod_p \prod_{j=1}^m \left( 1 - \frac{ \alpha_{j,\pi}(p) }{ p^s } \right)^{-1},
\end{equation*}

and that both the series and product are absolutely convergent for $\text{Re}(s) > 1$ (the notation $L(s,\pi)$ is meant to suggest that $\pi$ corresponds to an automorphic representation, although this is not assumed).  The second is that there is an Archimedean component

\begin{equation*}
L_\infty(s,\pi) = N^{s/2} \prod_{j=1}^m \Gamma_\R( s + \mu_j)
\end{equation*}

for $N \in \Z$ and $\mu_j \in \C$, such that the completed $L$ function $\Lambda(s,\pi) = L_\infty(s,\pi) L(s,\pi)$ has an analytic continuation to the entire complex plane.  Moreover, it should satisfy a functional equation

\begin{equation*}
\Lambda(s, \pi) = \kappa \Lambda(1-s, \widetilde{\pi} ),
\end{equation*}

for $\kappa$ a complex number of absolute value one and where

\begin{equation*}
L(s, \widetilde{\pi}) = \sum_{n=1}^\infty \frac{ \overline{a_\pi(n)} }{n^s}, \quad \text{and} \quad L_\infty(s, \widetilde{\pi}) = N^{s/2} \prod_{j=1}^m \Gamma_\R( s + \overline{\mu_j} ).
\end{equation*}

These conditions are quite general, and hold for all the $L$ functions appearing in our triple product identities.  In addition, we require some bounds towards the Ramanujan-Selberg conjectures for $\pi$, which predicts that $| \alpha_{j,\pi}(p) | \le 1$ and $\text{Re}(\mu_j) \ge 0$.  Write

\begin{equation*}
- \frac{ L'}{ L} (s,\pi) = \sum_{n=1}^\infty \frac{ \lambda_\pi(n) \Lambda(n) }{ n^s },
\end{equation*}

where $\lambda_\pi(n) = 0$ unless $n = p^k$ is a prime power, when it equals $\sum_{j=1}^m \alpha_{j,\pi}(p)^k$.  We require the existence of two constants $A_0, A \ge 1$ such that for all $x \ge 1$ the inequality

\begin{equation}
\label{weakramanujan}
\sum_{x < n < ex} \frac{ | \lambda_\pi(n) |^2 }{n} \Lambda(n) \le A^2 + \frac{A_0}{ \log ex}
\end{equation}

is satisfied; note that the Ramanujan conjecture would imply this with $A = m$ and $A_0 \ll m^2$.  The condition on the parameters $\mu_j$ is that $\text{Re}(\mu_j) \ge -1 + \delta_m$ for some $\delta_m > 0$ and all $j$.  If we define the analytic conductor of $\pi$ to be

\begin{equation*}
C(\pi) = N \prod_{j=1}^m (1 + |\mu_j| ),
\end{equation*}

Soundararajan proves the following.

\begin{theorem}
Under the assumptions on $L$ stated above,

\begin{equation*}
L(1/2, \pi) \ll \frac{ C(\pi)^{1/4} }{ ( \log C(\pi) )^{1-\epsilon} },
\end{equation*}

where the implied constant depends on $m, \epsilon, A_0, A$ and $\delta_m$.
\end{theorem}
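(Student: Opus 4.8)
The plan is to invoke \cite{So2}: this statement is Soundararajan's weak subconvexity theorem, and everything required has been arranged by the verification above that the $L$-functions produced by the triple product identities (\ref{weightprod1}) and (\ref{weightprodeis1}) satisfy the listed hypotheses --- a degree $m$ Euler product absolutely convergent for $\text{Re}(s) > 1$, analytic continuation together with a functional equation relating $\Lambda(s,\pi)$ and $\Lambda(1-s,\widetilde{\pi})$, the weak Ramanujan bound (\ref{weakramanujan}), and the constraint $\text{Re}(\mu_j) \ge -1 + \delta_m$ on the archimedean parameters. For completeness I will sketch the structure of Soundararajan's argument.

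First, an approximate functional equation writes $L(1/2,\pi)$ as $\sum_n a_\pi(n) n^{-1/2} V(n/\sqrt{C(\pi)})$ together with a dual sum of the same shape, where $V$ is a smooth weight which is essentially $1$ for $n \ll C(\pi)^{1/2}$ and decays rapidly beyond; the functional equation with $|\kappa| = 1$ makes the two pieces comparable. Bounding this sum trivially via $|a_\pi(n)| \le \tau_m(n)$ recovers the convexity bound $\ll C(\pi)^{1/4+\epsilon}$, so the whole content is to gain a factor $(\log C(\pi))^{1-\epsilon}$ in the partial sums $\sum_{n \le x} a_\pi(n) n^{-1/2}$ with $x \ll C(\pi)^{1/2+\epsilon}$; after a dyadic decomposition this amounts to showing $\sum_{n \sim N} a_\pi(n) n^{-1/2} \ll N^{1/2} (\log C(\pi))^{-1+\epsilon}$ in each range $N \le x$.

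The heart of the matter is a general estimate for partial sums of multiplicative functions, applied to $f(n) = a_\pi(n) n^{-it_0}$ for the oscillation $t_0$ dictated by the weights. The mechanism is that (\ref{weakramanujan}) controls $\sum_{p \le x} |\lambda_\pi(p)|^2/p$, so that on average $|a_\pi(p)|$ is bounded by $A$ and is frequently far smaller than its extreme value $m$; since a typical $n \le x$ is a product of about $\log\log x$ primes, this forces $\sum_{n \le x} a_\pi(n)$ to be smaller than the trivial bound $x(\log x)^{m-1}$ by a power of $\log x$, quantitatively through a Hal\'asz-type mean value argument together with a sieve to discard integers composed only of small primes. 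Soundararajan's theorem on mean values of multiplicative functions packages precisely this: unless $f(p)$ ``pretends'' to be $p^{it}$ for some real $t$ in a precise metric sense, one obtains the claimed power-of-log saving, and such a resonance is incompatible with $\Lambda(s,\pi)$ being entire --- it would manufacture a pole near $s = 1 + it$ --- so it cannot occur.

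I expect the main obstacle to be the pretentiousness analysis: one must rule out, uniformly in the conductor $C(\pi)$, that the coefficients $a_\pi(n)$ conspire with the oscillating factors $n^{-it}$ coming from the approximate functional equation, and it is here that the Euler product structure and the holomorphy of $\Lambda(s,\pi)$ enter decisively, with the bound $\text{Re}(\mu_j) \ge -1 + \delta_m$ ensuring that the archimedean factors do not interfere. The remaining work --- checking that all implied constants depend only on $m, \epsilon, A_0, A, \delta_m$ and reassembling the dyadic pieces via the approximate functional equation --- is routine analytic number theory, and is carried out in full in \cite{So2}.
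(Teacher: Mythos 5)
You have correctly identified the nature of this statement: the paper does not prove it but quotes it as Soundararajan's weak subconvexity theorem from \cite{So2}, after verifying that the $L$-functions $L(s,\sym^2\pi\otimes\pi')$ and $L(s,\sym^2\pi\otimes\lambda_{-m})$ arising from the triple-product identities satisfy the stated hypotheses when regarded as Dirichlet series over $\Q$. Your proposal takes exactly the same route, so in that sense it matches the paper. The sketch you append of Soundararajan's argument --- approximate functional equation, reduction to partial sums of a multiplicative function, a Hal\'asz-type mean-value estimate driven by the weak Ramanujan hypothesis (\ref{weakramanujan}), and the entirety of $\Lambda(s,\pi)$ ruling out the resonant case --- is a serviceable high-level description of \cite{So2}, but it is additional material not present in (and not required by) the paper, which treats the theorem purely as a black box; the only caveat is that the ``a resonance would manufacture a pole near $s=1+it$'' phrasing compresses what is really a more delicate interaction between the functional equation symmetry and the mean-value theorem, so you should not lean on that heuristic if you were actually to write out the argument.
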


We may prove theorem \ref{Some} by applying this result to the $L$ values $L(1/2, \sym^2 \pi \otimes \pi')$ and $L(1/2 + it, \sym^2 \pi \otimes \lambda_{-m} )$ appearing in equations (\ref{weightprod2}) and (\ref{weightprodeis3}), once we have established that the $L$ functions satisfy the necessary hypotheses.  While they are $L$ functions over $F$, they may be considered as being over $\Q$ by formal base change.  We begin with $L(s, \sym^2 \pi \otimes \lambda_{-m} )$; if $L(s, \pi)$ has the Euler product expansion

\begin{equation*}
L(s,\pi) = \prod_\p \left( 1 - \frac{ \alpha_\pi(\p) }{ N\p^s } \right)^{-1} \left( 1 - \frac{ \beta_\pi(\p) }{ N\p^s } \right)^{-1},
\end{equation*}

$L(s, \sym^2 \pi \otimes \lambda_{-m} )$ is given by the Euler product

\begin{equation*}
L(s, \sym^2 \pi \otimes \lambda_{-m} ) = \prod_\p \left( 1 - \frac{ \alpha_\pi(\p)^2 \lambda_{-m}(\p) }{ N\p^s } \right)^{-1} \left( 1 - \frac{ \lambda_{-m}(\p) }{ N\p^s } \right)^{-1} \left( 1 - \frac{ \beta_\pi(\p)^2 \lambda_{-m}(\p) }{ N\p^s } \right)^{-1}.
\end{equation*}

Recall our assumption that $|\alpha_\pi(\p)| = |\beta_\pi(\p)| = 1$.  The Archimedean factor of this function is

\begin{equation*}
L_\infty(s, \sym^2 \pi \otimes \lambda_{-m} ) = \prod_j L_{\infty, j}(s, \sym^2 \pi \otimes \lambda_{-m} ),
\end{equation*}

where

\begin{equation*}
L_{\infty,j}(s, \sym^2 \pi \otimes \lambda_{-m} ) = \Gamma_\R ( s + \beta(m,j) + 1) \Gamma_\R ( s + \beta(m,j) + k_j - 1) \Gamma_\R ( s + \beta(m,j) + k_j)
\end{equation*}

for $\nu_j$ real and

\begin{eqnarray*}
L_{\infty,j}(s, \sym^2 \pi \otimes \lambda_{-m} ) & = & \Gamma_\C ( s + k_j/2 + \beta(m,j) )^2 \Gamma_\C ( s + \beta(m,j) ) \\
& = & \Gamma_\R ( s + k_j/2 + \beta(m,j) )^2 \Gamma_\R ( s + k_j/2 + \beta(m,j) + 1 )^2 \\
& \quad & \quad \Gamma_\R ( s + \beta(m,j) ) \Gamma_\R ( s + \beta(m,j) + 1 )
\end{eqnarray*}

for $\nu_j$ complex.  In particular, it can be seen that all $\mu_j$ satisfy $\text{Re}(\mu_j) \ge 0$.  By work of Shimura, it is known that the completed $L$ function $\Lambda(s, \sym^2 \pi \otimes \lambda_{-m} )$ admits an analytic continuation to the whole complex plane and satisfies the functional equation

\begin{equation*}
\Lambda(s, \sym^2 \pi \otimes \lambda_{-m} ) = \Lambda(1-s, \sym^2 \pi \otimes \lambda_{m} ).
\end{equation*}

Therefore the $L$ function we consider satisfies all the hypotheses of Soundararajan's theorem as an Euler product over $F$, and it will continue to do so when considered as a product over $\Q$ - in particular, it will continue to satisfy the Ramanujan bound.  To apply the theorem to the value $L(1/2 + it, \sym^2 \pi \otimes \lambda_{-m} )$ we replace $L(s, \sym^2 \pi \otimes \lambda_{-m} )$ with the shifted function $L(s + it, \sym^2 \pi \otimes \lambda_{-m} )$, which still satisfies all the hypotheses and whose analytic conductor is now $\ll Nk^2 ( 1 + |t| + \| m \| )^{3n}$, to obtain

\begin{equation*}
L(1/2 + it, \sym^2 \pi \otimes \lambda_{-m} ) \ll \frac{ Nk^{1/2} ( 1 + |t| + \| m \| )^{3n/4} }{ ( \log Nk )^{1-\epsilon} }.
\end{equation*}

Turning now to $L(1/2, \sym^2 \pi \otimes \pi')$, let $L(s, \pi')$ have the Euler product

\begin{equation*}
L(s,\pi') = \prod_\p \left( 1 - \frac{ \alpha'_\pi(\p) }{ N\p^s } \right)^{-1} \left( 1 - \frac{ \beta'_\pi(\p) }{ N\p^s } \right)^{-1}
\end{equation*}

so that $L(s, \sym^2 \pi \otimes \pi')$ has the product expansion

\begin{multline*}
L(s, \sym^2 \pi \otimes \pi' ) = \prod_\p \left( 1 - \frac{ \alpha_\pi(\p)^2 \alpha_\pi'(\p) }{ N\p^s } \right)^{-1} \left( 1 - \frac{ \alpha_\pi'(\p) }{ N\p^s } \right)^{-1} \left( 1 - \frac{ \beta_\pi(\p)^2 \alpha_\pi'(\p) }{ N\p^s } \right)^{-1} \\
\left( 1 - \frac{ \alpha_\pi(\p)^2 \beta_\pi'(\p) }{ N\p^s } \right)^{-1} \left( 1 - \frac{ \beta_\pi'(\p) }{ N\p^s } \right)^{-1} \left( 1 - \frac{ \beta_\pi(\p)^2 \beta_\pi'(\p) }{ N\p^s } \right)^{-1}.
\end{multline*}

This $L$ function does not necessarily satisfy the Ramanujan bound because we are not assuming it for the representation $\pi'$, however because $\pi'$ is fixed the weaker estimate (\ref{weakramanujan}) will still hold by Rankin-Selberg theory applied to $\pi'$.  The Archimedean factor $L_{\infty,j}(s, \sym^2 \pi \otimes \pi' )$ at a real place is

\begin{equation*}
L_{\infty,j}(s, \sym^2 \pi \otimes \pi' ) = \Gamma_\R ( s + k_j - 1 \pm ir_j') \Gamma_\R ( s + k_j \pm ir_j') \Gamma_\R ( s \pm ir_j') \Gamma_\R ( s + 1 \pm ir_j'),
\end{equation*}

and at a complex place is

\begin{eqnarray*}
L_{\infty,j}(s, \sym^2 \pi \otimes \pi' ) & = & \Gamma_\C ( s + k_j/2 \pm ir_j'/2 )^2 \Gamma_\C ( s \pm ir_j'/2 ) \\
& = & \Gamma_\R ( s + k_j/2 \pm ir_j'/2 )^2 \Gamma_\R ( s + k_j/2 +1 \pm ir_j'/2 )^2 \\
& \quad & \quad \Gamma_\R ( s \pm ir_j'/2 ) \Gamma_\R ( s + 1 \pm ir_j'/2 ).
\end{eqnarray*}

The required bound $\text{Re}(\mu_j) \ge -1 + \delta$ now follows from the trivial bounds $\text{Im}(r_j') \le 1/2$ for $\nu_j$ real and $\text{Im}(r_j') \le 1$ for $\nu_j$ complex.  It is known by the work of Garrett \cite{Ga} that the completed $L$ function is entire in $\C$, and its value at $s$ is equal to its value at $1-s$.  It only remains to show that $L(s, \sym^2 \pi \otimes \pi' )$ satisfies the weak Ramanujan bound (\ref{weakramanujan}) as a Dirichlet series over $\Q$.  We have

\begin{eqnarray*}
- \frac{L'}{L} (s, \sym^2 \pi \otimes \pi' ) = \sum_\p \log N\p \sum_{n=1}^\infty \frac{ ( \alpha_\pi'^n(\p) + \beta_\pi'^n(\p) ) ( \alpha_\pi^{2n}(\p) + 1 + \beta_\pi^{2n}(\p) ) }{ N\p^{ns} },
\end{eqnarray*}

and $L$ satisfying the weak Ramanujan bound as a Dirichlet series over $\Q$ is equivalent to the bound

\begin{eqnarray*}
\sum_{\substack{ \p, n \\ x < N\p^n \le ex } } \log N\p \frac{ | ( \alpha_\pi'^n(\p) + \beta_\pi'^n(\p) ) ( \alpha_\pi^{2n}(\p) + 1 + \beta_\pi^{2n}(\p) ) |^2 }{ N\p^n } \le A^2 + \frac{ A_0 }{ \log( ex ) }.
\end{eqnarray*}

Applying the Ramanujan bound $|\alpha_\pi(\p)| = |\beta_\pi(\p)| = 1$, we only need to show that

\begin{eqnarray*}
\sum_{\substack{ \p, n \\ x < N\p^n \le ex } } \log N\p \frac{ | \alpha_\pi'^n(\p) + \beta_\pi'^n(\p) |^2 }{ N\p^n } \le A^2 + \frac{ A_0 }{ \log( ex ) }
\end{eqnarray*}

for all $x \ge 1$, where $A$ and $A_0$ are constants which are allowed to depend on $\pi'$.  This follows from Rankin-Selberg theory for $L(s, \pi' \times \widetilde{\pi}')$, whose logarithmic derivative is

\begin{equation*}
- \frac{L'}{L} (s, \pi' \times \widetilde{\pi}' ) = \sum_\p \sum_{n=1}^\infty \log N\p \frac{ | \alpha_\pi'^n(\p) + \beta_\pi'^n(\p) |^2 }{ N\p^{ns} }.
\end{equation*}

Because $L(s, \pi' \times \widetilde{\pi}')$ has a classical zero-free region $\text{Re}(s) \ge 1 - c_\pi' / \log( 1 + |t| )$, it follows in the same way as the proof of the prime number theorem that

\begin{equation*}
\sum_{\substack{ \p, n \\ x < N\p^n \le ex } } \frac{ \log N\p | \alpha_\pi'^n(\p) + \beta_\pi'^n(\p) |^2 }{ N\p^{ns} } = 1 + O_{\pi'} \left( \frac{1}{ \log(ex) } \right),
\end{equation*}

from which (\ref{weakramanujan}) follows.  As $L(s, \sym^2 \pi \otimes \pi')$ has analytic conductor $\ll Nk^4$, we may now apply the weak subconvex estimate to $L(1/2, \sym^2 \pi \otimes \pi')$ to complete the proof of theorem \ref{Some}.

\section{Conclusion of Proof}
\label{mixedconclusion}

We now conclude the proof of theorem \ref{weightmain}, by presenting the way in which theorems \ref{Some} and \ref{Home} may be combined as in Holowinsky and Soundararajan's paper \cite{HS}.  This relies on a lower bound for $L(1, \sym^2 \pi)$ and a relation between this value and the quantity $M_k(\pi)$ appearing in theorem \ref{Home}.  We first consider the symmetric square $L$ function $L(s, \sym^2 \pi)$, whose definition and basic analytic properites were given in section \ref{weightsubcon}, and collect some important results on it due to work of Gelbart and Jacquet \cite{GJ}, Hoffstein and Lockhart \cite{HL}, and Goldfeld, Hoffstein and Lockhart \cite{GHL}.  The lower bound we shall use for $L(1, \sym^2 \pi)$ is proved using the symmetric square lift of Gelbart and Jacquet \cite{GJ} from $GL(2)$ to $GL(3)$, which shows that $L(s, \sym^2 \pi)$ is the standard $L$ function of a cuspidal automorphic form on $GL(3)$.  Using the Rankin-Selberg convolution for this form, one may then establish a standard zero-free region for $L(s, \sym^2 \pi)$.  For instance, using Theorem 5.42 (or Theorem 5.44) of Iwaniec and Kowalski \cite{IK} one may show that for some constant $c > 0$ the region

\begin{equation*}
\mathcal{R} = \left\{ s = \sigma + it : \sigma \ge 1 - \frac{c}{ \log \| k \| ( 1 + |t|) } \right\}
\end{equation*}

contains no zero of $L(s, \sym^2 \pi)$ other than possibly a simple real zero.  This exceptional zero is ruled out by work of Hoffstein and Lockhart \cite{HL} (see the appendix by Goldfeld, Hoffstein and Lockhart \cite{GHL}), who show that there is an effectively computable choice of $c > 0$ such that $\mathcal{R}$ is totally zero free.  Furthermore, Goldfeld, Hoffstein and Lockhart \cite{GHL} show that

\begin{equation}
\label{sym2lower}
L(1, \sym^2 \pi) \gg \frac{1}{\log \| k \| }.
\end{equation}

The first consequence of this lower bound is the following.

\begin{lemma}
\label{Rkbound}
For any $t \in \R$ and $m \in \Z^{n-1}$, we have

\begin{equation*}
|L(s, \sym^2 \pi \otimes \lambda_{m})| \ll \frac{ Nk^{1/2} ( 1 + |t| + \| m \| )^{3n/4} }{ (\log \| k \| )^{1-\epsilon} }.
\end{equation*}

Therefore the quantity $R_k(\pi)$ appearing in theorem \ref{Home} satisfies

\begin{equation*}
R_k(\pi) \ll \frac{ (\log \| k \|)^\epsilon }{ (\log \| k \|) L(1, \sym^2 \pi) } \ll ( \log \| k \|)^\epsilon.
\end{equation*}
\end{lemma}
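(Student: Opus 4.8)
The plan is to deduce both assertions directly from the weak subconvexity bound established at the end of section \ref{weightsubcon}, together with the lower bound (\ref{sym2lower}).

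First I would recall that in section \ref{weightsubcon}, applying Soundararajan's theorem to the shifted $L$-function $L(s+it,\sym^2\pi\otimes\lambda_{-m})$ --- whose hypotheses were verified there and whose analytic conductor is $\ll Nk^2(1+|t|+\|m\|)^{3n}$ --- yields
\begin{equation*}
L(1/2+it,\sym^2\pi\otimes\lambda_{-m}) \ll \frac{Nk^{1/2}(1+|t|+\|m\|)^{3n/4}}{(\log Nk)^{1-\epsilon}}.
\end{equation*}
Since the maximal component of $k$ occurs in $Nk$ with exponent at least one we have $\|k\| \le Nk \le \|k\|^n$, so $\log Nk$ and $\log\|k\|$ agree up to the constant factor $n$; replacing one by the other (at the cost of enlarging $\epsilon$) and relabelling $\lambda_{-m}$ as $\lambda_m$ gives the first displayed inequality of the lemma.

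Next I would substitute this bound into the definition of $R_k(\pi)$ from theorem \ref{Home}. The factor $Nk^{1/2}$ in the numerator cancels the $\sqrt{Nk}$ in the denominator, leaving
\begin{equation*}
R_k(\pi) \ll \frac{1}{L(1,\sym^2\pi)\,(\log\|k\|)^{1-\epsilon}} \sum_{m\in\Z^{n-1}} \int_{-\infty}^{+\infty} \frac{(1+|t|+\|m\|)^{3n/4}}{(|t|+\|m\|+1)^A}\,dt.
\end{equation*}
The parameter $A$ in the definition of $R_k$ may be taken as large as we wish; choosing it large in terms of $n$ (for instance $A > \tfrac{3n}{4}+n$) makes the integral over $t$ and the sum over $m \in \Z^{n-1}$ converge to a quantity bounded independently of $k$. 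This gives $R_k(\pi) \ll (\log\|k\|)^\epsilon / \bigl(L(1,\sym^2\pi)\log\|k\|\bigr)$, which is the second displayed bound of the lemma; applying (\ref{sym2lower}) in the form $L(1,\sym^2\pi)^{-1} \ll \log\|k\|$ then yields $R_k(\pi) \ll (\log\|k\|)^\epsilon$.

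There is no real obstacle here: the analytic properties needed to invoke Soundararajan's theorem for $L(s,\sym^2\pi\otimes\lambda_{-m})$ were already checked in section \ref{weightsubcon}, and the only points requiring any attention are the comparison of $\log Nk$ with $\log\|k\|$ and the choice of $A$ large enough to guarantee absolute convergence of the double sum-integral --- both entirely routine.
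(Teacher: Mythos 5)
Your proposal follows the same route as the paper: the first display is exactly the weak subconvexity estimate for $L(s+it,\sym^2\pi\otimes\lambda_{-m})$ already established in section \ref{weightsubcon}, and the second follows by substituting it into the definition of $R_k(\pi)$, taking $A$ large enough for convergence, and invoking (\ref{sym2lower}). The reasoning matches the paper's proof of Lemma \ref{Rkbound}.
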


\begin{proof}
The first inequality follows from weak subconvexity, and is proven in section \ref{weightsubcon}.  The second bound follows immediately by substituting the first in the formula for $R_k(\pi)$ and applying the lower bound (\ref{sym2lower}) for $L(1, \sym^2 \pi)$.
\end{proof}

The relationship between $M_k(\pi)$ and $L(1,\sym^2 \pi)$ we shall use is based on the following lemma.

\begin{lemma}
\label{sym2lower1}
We have

\begin{equation*}
L(1, \sym^2 \pi) \gg ( \log \log \| k \| )^{-3} \exp \left( \sum_{N\p \le \| k \| } \frac{ \lambda_\pi( \p^2 ) }{ N\p } \right).
\end{equation*}

\end{lemma}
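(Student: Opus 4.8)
The plan is to pass to logarithms of the Euler product of $L(s,\sym^2\pi)$, reduce the claimed inequality to a lower bound for the tail of the resulting prime sum, and estimate that tail using the classical zero-free region for $L(s,\sym^2\pi)$ established in section \ref{weightsubcon}. The only inputs are facts already in hand: by Gelbart--Jacquet \cite{GJ}, $\sym^2\pi$ is a cuspidal automorphic representation of $GL_3/F$ (valid since $\pi$, being of full level, is non-dihedral), so $L(s,\sym^2\pi)$ is entire, of finite order, self-dual, and satisfies the functional equation recorded in section \ref{weightsubcon}; the Ramanujan bound $|\alpha_\pi(\p)| = |\beta_\pi(\p)| = 1$ assumed for $\pi$ is inherited by $\sym^2\pi$; and, by the work of Hoffstein--Lockhart and Goldfeld--Hoffstein--Lockhart \cite{HL, GHL}, $L(s,\sym^2\pi)$ has the zero-free region $\mathcal{R}$ with no Siegel zero.

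First I would write, for $\text{Re}(s)>1$,
\begin{equation*}
\log L(s,\sym^2\pi) = \sum_{\p}\sum_{j\ge 1}\frac{\alpha_\pi(\p)^{2j}+1+\beta_\pi(\p)^{2j}}{j\,N\p^{js}}.
\end{equation*}
Since $\alpha_\pi(\p)\beta_\pi(\p)=1$ and $|\alpha_\pi(\p)|=1$, the terms with $j\ge 2$, together with every term from a prime of residue degree at least $2$, form an absolutely convergent series whose value at $s=1$ is $O(1)$. Using $\alpha_\pi(\p)^2+1+\beta_\pi(\p)^2=\lambda_\pi(\p^2)$ and the non-vanishing of $L(s,\sym^2\pi)$ at $s=1$ (so that the remaining sum over degree-one primes converges), this yields
\begin{equation*}
\log L(1,\sym^2\pi) = \sum_{N\p\le\|k\|}\frac{\lambda_\pi(\p^2)}{N\p} + \sum_{N\p>\|k\|}\frac{\lambda_\pi(\p^2)}{N\p} + O(1),
\end{equation*}
so the lemma is equivalent to the lower bound $\sum_{N\p>\|k\|}\lambda_\pi(\p^2)/N\p \ge -3\log\log\log\|k\| - O(1)$.

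To prove this I would use $\lambda_\pi(\p^2)=\lambda_\pi(\p)^2-1\ge -1$: for any $Y>\|k\|$,
\begin{equation*}
\sum_{N\p>\|k\|}\frac{\lambda_\pi(\p^2)}{N\p}\ \ge\ -\sum_{\|k\|<N\p\le Y}\frac{1}{N\p}\ +\ \sum_{N\p>Y}\frac{\lambda_\pi(\p^2)}{N\p},
\end{equation*}
where Mertens' theorem for $F$ gives the first sum as $\log\log Y-\log\log\|k\|+O(1)$. The classical zero-free region $\mathcal{R}$, applied by the usual contour shift to the Dirichlet series for $\log L(s,\sym^2\pi)$ above, shows that the second sum is $O(1)$ once $Y$ is taken to be a suitable (explicit) power of $\|k\|$, chosen so that $\log\log Y-\log\log\|k\| \le 3\log\log\log\|k\|+O(1)$; this is exactly what is needed. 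Equivalently, one may run the argument through the Rankin--Selberg identity $L(s,\pi\times\widetilde{\pi})=\zeta_F(s)\,L(s,\sym^2\pi)$, evaluating $\sum_{N\p\le\|k\|}\lambda_\pi(\p)^2/N\p$ up to the same zero-free-region error.

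The main obstacle is this last step: extracting the sharp power $(\log\log\|k\|)^{-3}$ rather than a larger constant. Because the analytic conductor of $L(s,\sym^2\pi)$ is a fixed power of $\|k\|$, the classical zero-free region only yields a genuine saving in the tail $\sum_{N\p>Y}\lambda_\pi(\p^2)/N\p$ once $\log Y$ exceeds an appropriate multiple of $\log\|k\|$; one must pin down this threshold carefully --- keeping track throughout of the absence of a Siegel zero, via \cite{HL, GHL} --- so that the loss incurred by the Mertens sum is held to $3\log\log\log\|k\|+O(1)$. The rest is routine Euler-product bookkeeping and a standard contour shift.
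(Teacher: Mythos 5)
Your broad strategy---expand $\log L(1,\sym^2\pi)$ as a sum over primes, truncate at $\|k\|$, and control what remains with the zero-free region and the Hoffstein--Lockhart/Goldfeld--Hoffstein--Lockhart input---is the same as that of \cite{HS}, to which the paper simply refers (with the sole modification of using Mertens' theorem for $F$). So you are reconstructing the right argument. However, the way you have set up the truncation hides the real content and leaves a gap at precisely the step you flag as ``the main obstacle.''

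Two specific issues. First, your opening display writes $\log L(1,\sym^2\pi)$ as a sum of two prime sums at $s=1$; but the Euler product is only conditionally convergent there, so $\sum_{N\p>\|k\|}\lambda_\pi(\p^2)/N\p$ does not a priori define an absolutely convergent series, and the split needs justifying. The cleaner and in fact standard device is to work at $\sigma_0 = 1 + 1/\log\|k\|$, where the prime sum converges absolutely: then $\log L(\sigma_0,\sym^2\pi) = \sum_{N\p\le\|k\|}\lambda_\pi(\p^2)/N\p + O(1)$ by two routine $O(1)$ estimates (the damping of the terms $N\p\le\|k\|$, and the trivial bound on the tail $N\p>\|k\|$), and no delicate choice of a super-polynomial $Y$ is needed at all. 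Second, your claim that the far tail $\sum_{N\p>Y}\lambda_\pi(\p^2)/N\p$ is ``$O(1)$ once $Y$ is a suitable power of $\|k\|$'' is not established and is not how the zero-free region enters. A contour shift applies to $-L'/L$, not to $\log L$; the resulting prime-number-theorem error for a degree-$3n$ $L$-function of analytic conductor $\|k\|^{O(1)}$ only becomes $o(1)$ for $\log y$ well beyond a constant multiple of $\log\|k\|$; and converting this to an $O(1)$ bound on a weighted tail requires partial summation and a careful crossover, not just a zero-free region invoked in passing. Where the zero-free region and the absence of a Siegel zero actually do the work in \cite{HS} is in bounding $|L'/L(\sigma,\sym^2\pi)|\ll\log\|k\|$ on $[1,\sigma_0]$---via the standard estimate $\sum_\rho(1-\beta_\rho)/((1-\beta_\rho)^2+\gamma_\rho^2)\ll\log\|k\|$---so that $\log L(1,\sym^2\pi) = \log L(\sigma_0,\sym^2\pi)+O(1)$. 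Your sketch never isolates this step, which is where \cite{HL,GHL} genuinely enter and where the quantitative loss (however large) is incurred; without it you cannot close the argument, let alone pin down the power of $\log\log\|k\|$.
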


The proof of this over $\Q$ in \cite{HS} may be extended to a number field; the only modification is generalising the asymptotic $\sum_{p \le x} 1/p = \log \log x + O(1)$ to $\sum_{N\p \le x} 1/N\p= \log \log x + O(1)$.  Lemma \ref{sym2lower1} gives us the required estimate for $M_k(\pi)$ below.

\begin{lemma}
\label{Mkbound}
We have

\begin{equation*}
M_k(\pi) \ll (\log \| k \| )^{1/6} ( \log \log \| k \| )^{9/2} L(1, \sym^2 \pi)^{1/2}.
\end{equation*}

\end{lemma}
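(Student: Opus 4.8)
The plan is to bound the quotient $M_k(\pi)/L(1,\sym^2\pi)^{1/2}$ directly, by playing three half‑powers of the lower bound of Lemma \ref{sym2lower1} against the Euler product defining $M_k(\pi)$ and then optimising a quadratic in the Hecke eigenvalues; no additive cancellation and no arithmetic input beyond Lemma \ref{sym2lower1} will be needed.

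First I would write
\[
\frac{M_k(\pi)}{L(1,\sym^2\pi)^{1/2}} = \frac{1}{(\log\|k\|)^2\, L(1,\sym^2\pi)^{3/2}} \prod_{N\p \le \|k\|}\left(1 + \frac{2|\lambda_\pi(\p)|}{N\p}\right),
\]
and bound the product above by $\exp\big(\sum_{N\p\le\|k\|} 2|\lambda_\pi(\p)|/N\p\big)$ using $\log(1+u)\le u$ for $u\ge 0$ (this one‑sided estimate is all that is needed, even though $2|\lambda_\pi(\p)|/N\p$ exceeds $1$ for the few smallest primes). The prime ideal theorem for $F$ gives the Mertens asymptotic $\sum_{N\p\le x} 1/N\p = \log\log x + O(1)$ — the same ingredient already used to extend Lemma \ref{sym2lower1} to $F$ — so $(\log\|k\|)^{-2} \asymp \exp\big(-2\sum_{N\p\le\|k\|} 1/N\p\big)$, and raising the conclusion of Lemma \ref{sym2lower1} to the power $3/2$ yields $L(1,\sym^2\pi)^{-3/2} \ll (\log\log\|k\|)^{9/2}\exp\big(-\tfrac32\sum_{N\p\le\|k\|}\lambda_\pi(\p^2)/N\p\big)$.

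Next I would use that, since $\pi$ has trivial central character and satisfies the Ramanujan bound, its Satake parameters at each (unramified) prime $\p$ form a conjugate pair on the unit circle; hence $\lambda_\pi(\p)$ is real with $|\lambda_\pi(\p)|\le 2$ and $\lambda_\pi(\p^2)=\lambda_\pi(\p)^2-1$. Substituting this Hecke relation and collecting the three exponential factors gives
\[
\frac{M_k(\pi)}{L(1,\sym^2\pi)^{1/2}} \ll (\log\log\|k\|)^{9/2}\exp\left(\sum_{N\p\le\|k\|}\frac{g(|\lambda_\pi(\p)|)}{N\p}\right), \qquad g(t) = -\tfrac32 t^2 + 2t - \tfrac12 .
\]
On $[0,2]$ the concave quadratic $g$ attains its maximum at the interior point $t=2/3$, where $g(2/3)=1/6$; therefore $\sum_{N\p\le\|k\|} g(|\lambda_\pi(\p)|)/N\p \le \tfrac16\sum_{N\p\le\|k\|} 1/N\p = \tfrac16\log\log\|k\| + O(1)$, and exponentiating produces a factor $\ll (\log\|k\|)^{1/6}$. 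Rearranging gives $M_k(\pi) \ll (\log\|k\|)^{1/6}(\log\log\|k\|)^{9/2} L(1,\sym^2\pi)^{1/2}$. (Equivalently one may first record the intermediate bound $M_k(\pi) \ll (\log\|k\|)^\epsilon\exp\big(-\sum_{N\p\le\|k\|}(|\lambda_\pi(\p)|-1)^2/N\p\big)$ of (\ref{Mk}), obtained by applying Lemma \ref{sym2lower1} once, and then absorb the remaining $L(1,\sym^2\pi)^{1/2}$ by the same optimisation.)

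There is no substantial obstacle here: Lemma \ref{sym2lower1} — resting on the Gelbart–Jacquet lift and the Hoffstein–Lockhart / Goldfeld–Hoffstein–Lockhart zero‑free region — is the deep input and is already in hand, after which the argument is a short convexity computation. The only points requiring care are the passage from the Euler product to the exponential of a sum, the number‑field Mertens estimate $\sum_{N\p\le x} 1/N\p = \log\log x + O(1)$, and the verification that $g$ is maximised in the interior of $[0,2]$ with value exactly $1/6$ — this last fact being precisely what pins down the exponent $1/6$ (and, together with the cube in Lemma \ref{sym2lower1}, the power $9/2$) in the statement.
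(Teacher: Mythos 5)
Your proposal is correct and takes essentially the same approach as the paper: the paper's inequality $2|x| \le \tfrac{2}{3} + \tfrac{3}{2}x^2$ combined with the Hecke relation $\lambda_\pi(\p^2) = \lambda_\pi(\p)^2 - 1$ and then Lemma \ref{sym2lower1} plus Mertens is exactly your quadratic $g(t) = -\tfrac{3}{2}t^2 + 2t - \tfrac{1}{2} \le \tfrac{1}{6}$, merely organized as a tight AM--GM step applied before rather than after collecting the exponents. The slight difference in presentation (you track the quotient $M_k(\pi)/L(1,\sym^2\pi)^{1/2}$ and optimize the quadratic explicitly; the paper states the inequality and lets the reader divide through) does not change the content.
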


\begin{proof}
From the inequality $2 |x| \le \tfrac{2}{3} + \tfrac{3}{2} x^2$ and the Hecke relations, we obtain

\begin{equation*}
2 \sum_{N\p \le \|k \| } \frac{ |\lambda_\pi(\p)| }{ N\p} \le \frac{2}{3} \sum_{N\p \le \|k \| } \frac{ 1 }{ N\p} + \frac{3}{2} \sum_{N\p \le \|k \| } \frac{ \lambda_\pi(\p)^2 }{ N\p} = \frac{13}{6} \sum_{N\p \le \|k \| } \frac{ 1 }{ N\p} + \frac{3}{2} \sum_{N\p \le \|k \| } \frac{ \lambda_\pi(\p^2) }{ N\p}.
\end{equation*}

Using lemma \ref{sym2lower1} and $\sum_{N\p \le x} 1/N\p= \log \log x + O(1)$, the lemma follows.
\end{proof}

We may now prove the decay of $\langle \phi F_k, F_k \rangle$ for $\phi$ a Hecke-Maass cusp form or pure incomplete Eisenstein series.  We consider the Maass case first.  If $L(1, \sym^2 \pi) \ge (\log \| k \|)^{-7/15}$, then theorem \ref{Some} gives $\langle \phi F_k, F_k \rangle \ll (\log \| k \|)^{-1/30 + \epsilon}$.  Otherwise, from lemma \ref{Mkbound} we have that $M_k(\pi) \ll (\log \| k \|)^{-1/15 + \epsilon}$, and now theorem \ref{Home} gives $\langle \phi F_k, F_k \rangle \ll (\log \| k \|)^{-1/30 + \epsilon}$.  Therefore the bound of theorem \ref{weightmain} holds in either case.

In the Eisenstein case, we begin by showing how theorem \ref{Some} may be used to treat pure incomplete Eisenstein series in the cases where $L(1, \sym^2 \pi)$ is large.  By Mellin inversion, we may write

\begin{equation*}
E(\psi, m | z) = \frac{1}{2\pi i} \int_{(\sigma)} \Psi(-s) E(s, m, z ) ds,
\end{equation*}

where $\Psi$ is the Mellin transform of $\psi$.  We may move the line of integration to $\sigma = 1/2$ to obtain

\begin{equation*}
E(\psi, m | z) = \frac{1}{\text{Vol}(Y) } \langle E(\psi, m | z), 1 \rangle + \frac{1}{2\pi i} \int_{(1/2)} \Psi(-s) E(s, m, z ) ds,
\end{equation*}

and so

\begin{equation}
\label{piesprod}
\langle E(\psi, m | z) F_k, F_k \rangle = \frac{1}{\text{Vol}(Y) } \langle E(\psi, m | z), 1 \rangle + \frac{1}{2\pi i} \int_{(1/2)} \Psi(-s) \langle E(s, m, z) F_k, F_k \rangle ds.
\end{equation}

We now apply theorem \ref{Some} to obtain the bound

\begin{eqnarray*}
\int_{(1/2)} \Psi(-s) \langle E(s, m, z) F_k, F_k \rangle ds & \ll & \int_{\R} | \Psi( -1/2 - it) | \frac{ ( 1 + |t| + \| m \| )^{2n} }{ (\log \| k \|)^{1-\epsilon} L(1, \sym^2 \pi) } ds \\
& \ll & \frac{ (\log \| k \|)^{-1 +\epsilon} }{ L(1, \sym^2 \pi) }.
\end{eqnarray*}

It follows by substituting this in (\ref{piesprod}) that

\begin{equation*}
\left| \langle E(\psi, m | z) F_k, F_k \rangle - \frac{1}{\text{Vol}(Y) } \langle E(\psi, m | z), 1 \rangle \right| \ll  \frac{ (\log \| k \|)^{-1 +\epsilon} }{ L(1, \sym^2 \pi) }.
\end{equation*}

Therefore if $L(1, \sym^2 \pi) \ge (\log \| k \|)^{-13/15}$, we obtain the bound of theorem \ref{weightmain}.  If $L(1, \sym^2 \pi) < (\log \| k \|)^{-13/15}$, lemma \ref{Mkbound} gives $M_k(\pi) \ll (\log \| k \|)^{-4/15 + \epsilon}$.  Applying proposition \ref{Home} with the bound on $R_k(\pi)$ provided by lemma \ref{Rkbound}, we have

\begin{equation*}
\left| \langle E(\psi, m | z) F_k, F_k \rangle - \frac{1}{\text{Vol}(Y) } \langle E(\psi, m | z), 1 \rangle \right| \ll  ( \log \| k \| )^\epsilon M_k(\pi)^{1/2} \ll (\log \| k \|)^{-2/15 + \epsilon},
\end{equation*}

and so the bound of theorem \ref{weightmain} hold in this case also.

\section{Equidistribution of Zero Currents}
\label{currents}

This section contains the proof of theorem \ref{currentQUE} on the equidistribution of the zero divisors of holomorphic Hecke modular forms.  The proof is based on ideas from complex potential theory, which may be described in the general context of of a compact complex manifold $M$ with a positive holomorphic line bundle $L$.  Suppose that $L$ has been equipped with a Hermitian metric $h$, and let $\omega = c_1(h)$ be the associated K\"ahler form.  If $s_N \in H^0( M, L^N)$ are a sequence of $L^2$ normalised sections of $L^N$ whose mass becomes equidistributed on $M$, potential theory may be used to show that their normalised zero divisors $\tfrac{1}{N} Z_N$ tend weakly to $\omega$ in the sense of currents described in section \ref{currentQUE1}.  This was first discovered by Nonnenmacher and Vorros \cite{NV} in the context of quantum maps on tori, and extended in the generality described here by Schiffman and Zelditch \cite{SZ}.  If we now let $\HH^n$ denote the product of $n$ upper half planes and let $L_k$ be the line bundle of differentials of the form $f(z) \otimes_i dz_i^{k_i/2}$ on $\HH^n$, or its quotient by $\Gamma$, theorem \ref{currentQUE} is thus an extension of the result in \cite{SZ} to the bundle $L_k$ over the noncompact manifold $Y$.  To prove it we shall apply the argument of Schiffman and Zelditch, adding the adjustments of Rudnick \cite{Ru} to deal with the cusp.

We may give $L_k$ the natural Hermitian inner product $\| \otimes dz_i^{k_i/2} \|^2 = y^k$, whose associated K\"ahler form $\omega$ is

\begin{eqnarray*}
\omega & = & \frac{-i}{2\pi} \partial \overline{\partial} \log y^k \\
 & = & \frac{1}{4 \pi} \sum k_i y_i^{-2} dx_i \wedge dy_i.
\end{eqnarray*}

If $f$ is a holomorphic modular form of weight $k$, $f \otimes dz_i^{k_i/2}$ is then a section of $L_k$ with $\| f \otimes dz_i^{k_i/2} \|^2 = |f(z)|^2 y^k$.  We let $Z_f$ be the zero divisor of $f$ on $Y$, and $\widetilde{Z}_f$ its pullback to $\HH^n$.  For $\phi \in A^{n-1, n-1}(\HH^n)$ smooth and compactly supported, let 

\begin{equation*}
F_\phi = \sum_{\gamma \in \Gamma} \gamma^* \phi
\end{equation*}

be its symmetrisation under $\Gamma$.  If $f_N$ are a sequence of modular forms of weight $Nk$ as in theorem \ref{currentQUE}, we shall compare $\tfrac{1}{N} Z_N$ and $\omega$ by testing them against the differential forms $F_\phi$ using the following lemma.

\begin{lemma}
\label{lelong}
If $f$ is a holomorphic modular form of weight $k$ on $Y$,

\begin{equation*}
\int_{Z_f} F_\phi = \int_Y F_\phi \wedge \omega + \frac{i}{\pi} \int_{\HH^n} \log( y^{k/2} |f(z)| ) \partial \overline{\partial} \phi.
\end{equation*}

\end{lemma}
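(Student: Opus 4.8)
The plan is to reduce the identity to the classical Poincaré–Lelong formula on $\HH^n$ and then fold the result back down to $Y$. First I would lift everything to $\HH^n$: by definition of integration against the symmetrised form $F_\phi$, we have $\int_{Z_f} F_\phi = \int_{\widetilde Z_f} \phi$ (working modulo the standard torsion-free finite-index subgroup $\Gamma'$ and dividing by the index, exactly as in the discussion of currents in section \ref{currentQUE1}; the sum over $\Gamma$ in $F_\phi$ exactly compensates for passing from the fundamental domain to all of $\HH^n$). Similarly $\int_Y F_\phi\wedge\omega = \int_{\HH^n}\phi\wedge\widetilde\omega$, where $\widetilde\omega = \tfrac{-i}{2\pi}\partial\overline\partial\log y^k$ is the pullback of $\omega$. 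So the claim becomes the statement, purely on $\HH^n$, that
\begin{equation*}
\int_{\widetilde Z_f}\phi = \int_{\HH^n}\phi\wedge\widetilde\omega + \frac{i}{\pi}\int_{\HH^n}\log\bigl(y^{k/2}|f(z)|\bigr)\,\partial\overline\partial\phi.
\end{equation*}

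The second step is to invoke the Poincaré–Lelong formula: for a holomorphic function $f$ on a complex manifold, the current of integration over its zero divisor satisfies $[\widetilde Z_f] = \tfrac{i}{\pi}\partial\overline\partial\log|f|$ (with the appropriate normalisation; here $f$ is holomorphic in the $z_i$, so $\log|f|$ is plurisubharmonic and $\partial\overline\partial\log|f|\ge 0$ as a current). Pairing this current against the test form $\phi$, and using that $\partial\overline\partial$ is formally self-adjoint up to sign on compactly supported forms (integration by parts, with no boundary terms since $\phi$ has compact support), gives
\begin{equation*}
\int_{\widetilde Z_f}\phi = \frac{i}{\pi}\int_{\HH^n}\log|f(z)|\,\partial\overline\partial\phi.
\end{equation*}
Now I would simply split $\log|f| = \log\bigl(y^{k/2}|f|\bigr) - \tfrac{k}{2}\log y$, i.e. $\log(y^{k/2}|f|) = \log|f| + \tfrac12\log y^k$. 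Substituting this into the right-hand side, the $\log(y^{k/2}|f|)$ piece is precisely the error term we want, and the leftover $-\tfrac{i}{2\pi}\int\log y^k\,\partial\overline\partial\phi$ should be recognised, after one more integration by parts, as $\int_{\HH^n}\phi\wedge\widetilde\omega$ since $\widetilde\omega = \tfrac{-i}{2\pi}\partial\overline\partial\log y^k$ and the wedge $\phi\wedge\partial\overline\partial\psi$ matches $(\partial\overline\partial\psi)\wedge\phi$ up to the sign bookkeeping of $(n-1,n-1)$ against $(1,1)$ forms in dimension $n$.

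The one genuine subtlety — and the step I expect to need the most care — is the justification of the integrations by parts and the application of Poincaré–Lelong in this noncompact, cusped setting. The test form $\phi$ on $\HH^n$ is only assumed compactly supported on $\HH^n$ (not on $Y$), so after symmetrisation $F_\phi$ is $\Gamma$-invariant but lives on a fundamental domain that runs out to the cusp; however, since we have pulled the whole computation back to $\HH^n$ where $\phi$ itself has honest compact support, all the by-parts manipulations are legitimate with no boundary contribution, and the local integrability of $\log|f|$ and $\log y$ near the zero set of $f$ and near $y=0$ is standard (plurisubharmonic functions are $L^1_{loc}$, and $\log y$ is smooth on $\HH^n$). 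One should also check that $\widetilde Z_f$, the pullback of the divisor, is exactly the zero divisor of the lifted holomorphic function, which is immediate. Thus the proof is essentially: lift to $\HH^n$, apply Poincaré–Lelong, and algebraically separate the $y^{k/2}$ factor, with the Kähler-form term emerging from $\widetilde\omega = \tfrac{-i}{2\pi}\partial\overline\partial\log y^k$.
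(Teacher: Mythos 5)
Your proposal is correct and follows the same route as the paper's proof: unfold $\int_{Z_f} F_\phi$ to $\int_{\widetilde Z_f}\phi$, apply Poincar\'e--Lelong to the global holomorphic $f$ on $\HH^n$, split $\log|f| = -\log y^{k/2} + \log(y^{k/2}|f|)$, and integrate by parts to recognise the $\omega\wedge F_\phi$ term. The only cosmetic difference is that you pass $\int_Y F_\phi\wedge\omega$ to $\HH^n$ at the outset, whereas the paper refolds $\int_{\HH^n}\omega\wedge\phi$ back to $Y$ at the end.
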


\begin{proof}

By unfolding $F_\phi$, we get

\begin{equation}
\label{currentunfold}
\int_{Z_f} F_\phi = \int_{ \widetilde{Z}_f } \phi.
\end{equation}

As $\widetilde{Z}_f$ is the zero divisor of the global holomorphic function $f$ on $\HH^n$, we may apply the Poincare-Lelong formula to the RHS of (\ref{currentunfold}), obtaining

\begin{eqnarray*}
\int_{Z_f} F_\phi & = & \frac{i}{\pi} \int_{\HH^n} \log |f(z)| \partial \overline{ \partial} \phi \\
 & = & - \frac{i}{\pi} \int_{\HH^n} \log y^{k/2} \partial \overline{ \partial} \phi + \frac{i}{\pi} \int_{\HH^n} \log ( y^{k/2} |f(z)| ) \partial \overline{ \partial} \phi.
\end{eqnarray*}

After integration by parts the first term becomes

\begin{equation*}
- \frac{i}{\pi} \int_{\HH^n} \partial \overline{ \partial} \log y^{k/2} \phi  = \int_{\HH^n} \omega \wedge \phi
\end{equation*}

and may be refolded to $\int_Y \omega \wedge F_\phi$, which completes the proof.

\end{proof}

After applying lemma \ref{lelong} to $\tfrac{1}{N} Z_N$, we are left with proving that $\tfrac{1}{N} \log ( y^{Nk/2} |f_N(z)| ) \overset{w^*}{\longrightarrow} 0$ locally everywhere.  As in \cite{Ru,SZ} this will follow from the plurisubharmonicity of $\log |f_N|$ and the equidistribution result $y^{Nk} |f_N(z)|^2 \overset{w^*}{\longrightarrow} c$, once we know that $\tfrac{1}{N} \log ( y^{Nk/2} |f_N(z)| )$ is is bounded above and has lim sup equal to 0, and that both properties hold locally uniformly.  Both of these are provided by the following lemma and the assumption (which we may clearly make) that the $f_N$ are $L^2$ normalised.

\begin{lemma}
\label{pointwise}

Let $f$ be a Hecke cusp form of weight $k$ for $\Gamma$.  Then uniformly for $z$ in compact subsets of $\HH^n$,

\begin{equation}
\frac{ y^k |f(z)|^2 }{ \| f \|^2 } \ll_\Gamma Nk^{5/2 + \epsilon}.
\end{equation}

\end{lemma}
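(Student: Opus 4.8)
The plan is to read the bound off the Fourier expansion of $f$. Since $y^k|f(z)|^2$ descends to a function on $Y$, it suffices to bound it on a fixed fundamental domain for $\Gamma$, where the $y_i$ are bounded above and below in terms of $\Gamma$; inside a cusp neighbourhood the bound only improves, the decay of $|f|$ outweighing the growth of $y^k$. Normalising so that $\|f\|^2 = \int_Y y^k|f|^2\,dv = 1$, i.e. so that $|a_f(1)|^2$ is given by (\ref{holonorm}), and recalling from section \ref{holofourier} that $f(z) = \sum_{\eta>0}a_f(\eta)e(\tr(\eta\kappa z))$ with $a_f(\eta) = \lambda_\pi(\eta)a_f(1)\eta^{(k-1)/2}$ and $|\lambda_\pi(\eta)| \le \tau(\eta) \ll (N\eta)^\epsilon$ by Deligne, termwise estimation gives
\begin{equation*}
y^{k/2}|f(z)| \ll_\Gamma |a_f(1)|\Bigl(\prod_i y_i^{k_i/2}\Bigr)\sum_{\eta>0}(N\eta)^\epsilon\prod_i\eta_i^{(k_i-1)/2}e^{-2\pi\eta_i\kappa_i y_i},
\end{equation*}
so everything reduces to bounding this exponential sum over $\OO^+ = \OO\cap\F^+$ and then unwinding the normalisation.

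To estimate the sum I would split off a slowly varying exponential: with $\delta_i := c/k_i$ for a small absolute constant $c$, write $e^{-2\pi\eta_i\kappa_i y_i} = e^{-(2\pi-\delta_i)\eta_i\kappa_i y_i}\,e^{-\delta_i\eta_i\kappa_i y_i}$. The polynomial part $\prod_i\eta_i^{(k_i-1)/2}e^{-(2\pi-\delta_i)\eta_i\kappa_i y_i}$ is bounded by its pointwise maximum $\prod_i\bigl(\tfrac{k_i-1}{(4\pi-2\delta_i)\kappa_i y_i e}\bigr)^{(k_i-1)/2}$, which since $\delta_i\asymp 1/k_i$ exceeds $\prod_i\bigl(\tfrac{k_i-1}{4\pi\kappa_i y_i e}\bigr)^{(k_i-1)/2}$ only by a bounded factor. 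The leftover sum $\sum_{\eta\in\OO^+}(N\eta)^\epsilon\prod_i e^{-\delta_i\eta_i\kappa_i y_i}$ has summand with logarithmic derivative $O_\Gamma(1/k_i)$ in each coordinate, so a routine lattice-point comparison gives $\ll_\Gamma (Nk)^\epsilon\int_{\F^+}\prod_i e^{-\delta_i t_i\kappa_i y_i}\,dt \ll_\Gamma (Nk)^\epsilon\prod_i(\delta_i\kappa_i y_i)^{-1} \ll_\Gamma (Nk)^\epsilon\prod_i k_i y_i^{-1}$.

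Feeding this back, cancelling $\prod_i y_i^{k_i/2}$, squaring, and inserting the value (\ref{holonorm}) of $|a_f(1)|^2$ — whose factor $\kappa^k\prod_i(4\pi)^{k_i}/\Gamma(k_i)$ exactly supplies the powers of $\kappa_i$, $4\pi$ and $e$ needed to cancel — one obtains
\begin{equation*}
y^k|f(z)|^2 \ll_\Gamma \frac{(Nk)^\epsilon}{L(1,\sym^2\pi)}\prod_i \kappa_i\,k_i^2\,y_i^{-1}\,\frac{(k_i-1)^{k_i-1}}{\Gamma(k_i)e^{k_i-1}} \ll_\Gamma \frac{(Nk)^\epsilon}{L(1,\sym^2\pi)}\prod_i k_i^{5/2}\,y_i^{-1},
\end{equation*}
using Stirling in the form $(k_i-1)^{k_i-1}/(\Gamma(k_i)e^{k_i-1}) \ll k_i^{1/2}$. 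On the fundamental domain $\prod_i y_i^{-1}\ll_\Gamma 1$, so $\prod_i k_i^{5/2}y_i^{-1}\ll_\Gamma(Nk)^{5/2}$; finally the lower bound $L(1,\sym^2\pi)\gg(\log\|k\|)^{-1}$ of (\ref{sym2lower}) absorbs $1/L(1,\sym^2\pi)\ll(Nk)^\epsilon$, yielding $y^k|f(z)|^2/\|f\|^2\ll_\Gamma(Nk)^{5/2+\epsilon}$ after renaming $\epsilon$.

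The delicate point is the treatment of the exponential sum $\sum_{\eta\in\OO^+}\prod_i\eta_i^{(k_i-1)/2}e^{-2\pi\eta_i\kappa_i y_i}$: because $\OO^+$ is not a product set and the summand varies by a factor exponential in $k_i$ across a fundamental cell of $\OO$, a direct comparison with the corresponding Gamma integral is not available. The device of peeling off $e^{-\delta_i\eta_i\kappa_i y_i}$ with $\delta_i\asymp 1/k_i$ avoids this — the polynomial piece is then handled by a single scalar maximisation and the lattice only sees a slowly varying weight — at the cost of an extra $\prod_i k_i^2$, which is why the exponent is $5/2$ rather than the smaller power a more careful argument would give. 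Since Lemma \ref{lelong} only requires a fixed power of $Nk$, this loss is immaterial.
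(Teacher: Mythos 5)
Your proof is correct but follows a genuinely different route from the paper's. Both start from the Fourier expansion and the normalisation (\ref{holonorm}) of $a_f(1)$, and both reduce to bounding $\sum_{\eta\in\OO^+}\prod_i(\eta_i\kappa_iy_i)^{k_i/2}e^{-2\pi\eta_i\kappa_iy_i}$ in terms of the sum over a semi-lattice. The paper packages this in a general-purpose Lemma \ref{integraltest} (stated without proof) for products of unimodal functions summed over a rotated lattice: the lattice sum is bounded by a sum of integrals of $g$ over the coordinate hyperplanes through the peak, which after factoring becomes $\prod_i(\int g_i + g_i(t_i))$. You sidestep that lemma entirely by splitting $e^{-2\pi\eta_i\kappa_iy_i}=e^{-(2\pi-\delta_i)\eta_i\kappa_iy_i}e^{-\delta_i\eta_i\kappa_iy_i}$ with $\delta_i\asymp 1/k_i$, bounding the rapidly varying piece $\prod_i\eta_i^{(k_i-1)/2}e^{-(2\pi-\delta_i)\eta_i\kappa_iy_i}$ by a pointwise maximum, and then comparing the remaining slowly-varying sum $\sum_{\eta\in\OO^+}(N\eta)^\epsilon\prod_ie^{-\delta_i\eta_i\kappa_iy_i}$ directly to an integral. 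Your route is more self-contained (it does not invoke the unproved lattice lemma) and arguably clarifies exactly where the $y_i$-dependence and the non-product nature of $\OO^+$ enter, at the modest cost of a cruder exponent.

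One minor point: your final Stirling estimate reads $(k_i-1)^{k_i-1}/(\Gamma(k_i)e^{k_i-1})\ll k_i^{1/2}$, and you present this as the source of the $5/2$ exponent. The bound is true but very loose — Stirling actually gives $(k_i-1)^{k_i-1}/(\Gamma(k_i)e^{k_i-1})\asymp k_i^{-1/2}$, so your argument really yields $Nk^{3/2+\epsilon}$, which is stronger than the statement. (The paper's own arithmetic has a similar slip: its final step $\ll Nk^\epsilon\prod(k_i^{5/4}+k_i^{-3/4})$ should be $\prod(k_i^{3/4}+k_i^{1/4})$, again giving $3/2$ rather than $5/2$ for $y^k|f|^2$.) None of this affects correctness, since Lemma \ref{lelong} only needs some fixed power of $Nk$, but you should not suggest the $k_i^{1/2}$ bound is what Stirling gives.
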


\begin{proof}

Assume $\| f \|^2 = 1$.  We shall bound $|f|$ using its Fourier expansion

\begin{equation*}
f(z) = \sum_{\xi > 0} a_f(\xi) e( \tr( \xi \kappa z) ),
\end{equation*}

and the proportionality relation $a_f(\xi) = \lambda_\pi(\xi) a_f(1) \xi^{(k-1)/2}$ with $\lambda_\pi(\xi) \ll N\xi^\epsilon$.  Applying these and the normalisation of $a_f(1)$ from (\ref{holonorm}), we have

\begin{eqnarray*}
y^{k/2} |f(z)| & \le & \sum_{\xi > 0 } |a_f(\xi)| y^{k/2} \exp( -2\pi \tr( \xi \kappa y) ) \\
& \ll & \kappa^{k/2} Nk^\epsilon \prod_{i=1}^n \frac{ (4\pi)^{k_i/2} }{ \Gamma(k_i)^{1/2} } \sum_{\xi > 0 } \xi^{ (k-1)/2 + \epsilon} y^{k/2} \exp( -2\pi \tr( \xi \kappa y) ) \\
& = & Nk^\epsilon \prod_{i=1}^n \frac{ 1 }{ \Gamma(k_i)^{1/2} } \sum_{\xi > 0 } N\xi^{-1/2 + \epsilon} (4 \pi \xi \kappa y)^{k/2} \exp( -2\pi \tr( \xi \kappa y) ) \\
& \le & Nk^\epsilon \prod_{i=1}^n \frac{ 1 }{ \Gamma(k_i)^{1/2} } \sum_{\xi > 0 } (4 \pi \xi \kappa y)^{k/2} \exp( -2\pi \tr( \xi \kappa y) ).
\end{eqnarray*}

We define $g_i(x) = x^{k_i/2} e^{-x/2}$ and let $g: \R_+^n \rightarrow \R$ be the product function.  If we define $L$ to be the semi-lattice $4\pi \kappa y \OO \cap \R_+^n$, the upper bound above may be written

\begin{equation}
\label{potentialsum}
y^{k/2} |f(z)| \ll Nk^\epsilon \prod_{i=1}^n \frac{ 1 }{ \Gamma(k_i)^{1/2} } \sum_{x \in L} g(x).
\end{equation}

We now apply a lemma bounding the sum in (\ref{potentialsum}) in terms of various integrals of $g$.  Suppose $g_i$ is increasing for $x < t_i$ and decreasing for $x > t_i$.  Let $\mathcal{P}$ be the set of subsets of $\{ 1, \ldots, n \}$, and for $S \in \mathcal{P} $ define the subspace $H_S \in \R_+^n$ by

\begin{equation*}
H_S = \{ x \in \R_+^n \; | \; x_i = t_i, i \in S \}.
\end{equation*}

We then have the following bound (whose proof we omit) on $\sum_{x \in L} g(x)$.

\begin{lemma}
\label{integraltest}

\begin{eqnarray*}
\sum_{x \in L} g(x) & \ll & \sum_{S \in \mathcal{P} } \int_{H_S} g dv \\
 & = & \prod_{i=1}^n \left( \int_{\R^+} g(t) dt + g(t_i) \right),
\end{eqnarray*}

where the implied constant is bounded in compact families of lattices $L$.
\end{lemma}

Applying this to (\ref{potentialsum}) gives

\begin{eqnarray*}
y^{k/2} |f(z)| & \ll & Nk^\epsilon \prod_{i=1}^n \frac{ 1 }{ \Gamma(k_i)^{1/2} } \left( \int_{\R^+} x^{k_i/2} e^{-x/2} dx + k_i^{k_i/2} e^{-k_i/2} \right) \\
 & = & Nk^\epsilon \prod_{i=1}^n \frac{ 1 }{ \Gamma(k_i)^{1/2} } \left( 2^{k_i/2 + 1} \Gamma( k_i/2 + 1) + k_i^{k_i/2} e^{-k_i/2} \right) \\
 & \ll & Nk^\epsilon \prod_{i=1}^n ( k_i^{5/4} + k_i^{-3/4} ) \\
 & \ll & Nk^{5/4 + \epsilon}.
\end{eqnarray*}

The local uniformity of lemma \ref{integraltest} in $L$ gives the local uniformity of this bound, which completes the proof of lemma \ref{pointwise} and theorem \ref{currentQUE}.

\end{proof}

\section{Appendix}
\label{appendix}

We include here a number of routine calculations that were omitted during the proof of propositions \ref{Holo1} and \ref{Holomixed}.  These are the Fourier expansions of Eisenstein series over mixed number fields, the $L^2$ normalisations of cohomological automorphic forms, the calculation of the volume of $\F_1^\times / \OO_+^\times$ and the verification of the main term picked up in the contour shifts in lemma \ref{Iphimain} and equation (\ref{Iphimain2}).

\subsection{Fourier Expansions of Eisenstein Series}
\label{appeisenstein}

We recall the definition of $E(s, m, z)$ for $s \in \C$ and $m \in \Z^{r-1}$,

\begin{equation*}
E(s, m, z) = \sum_{ \gamma \in \Gamma_\infty \backslash \Gamma } N( y ( \gamma z ) )^s \lambda_m( y (\gamma z ) ).
\end{equation*}

We let $s_i = s + \beta(m, i)/\delta_i$, so that this may be rewritten

\begin{equation*}
E(s, m, z) = \sum_{ \gamma \in \Gamma_\infty \backslash \Gamma } \prod_{i=1}^r y_i( \gamma z )^{\delta_i s_i}.
\end{equation*}

The map sending $\gamma \in \Gamma_\infty \backslash \Gamma$ to its lower two entries is a bijection from $\Gamma_\infty \backslash \Gamma$ to the set of pairs $\{ c, d \}$ of relatively prime elements of $\OO$ modulo $\OO^\times$, and $y_i ( \gamma z )$ may be expressed in terms of this pair as

\begin{eqnarray*}
y_i( \gamma z ) & = & \frac{ y_i }{ | c_i z_i + d_i |^2 }, \quad i \le r_1\\
y_i( \gamma z ) & = & \frac{ y_i }{ |c_i|^2 y_i^2 + | c_i x_i + d_i |^2 }, \quad i > r_1.
\end{eqnarray*}

Therefore if we define $F(s,m,z)$ by

\begin{equation}
\label{Fdefine}
F(s,m,z) = \sum_{ \{ c, d \} } \prod_{i \le r_1} \frac{ y_i^{s_i} }{ | c_i z_i + d_i |^{2s_i} } \prod_{i > r_1} \frac{ y_i^{2s_i} }{ ( |c_i|^2 y_i^2 + | c_i x_i + d_i |^2 )^{2s_i} },
\end{equation}

where the sum is over all pairs $\{ c, d \}$ modulo $\OO^\times$, we have $F(s,m,z) = \zeta( 2s, \lambda_{-2m} ) E(s, m, z)$.  The $\xi$th Fourier coefficient of $F(s,m,z)$ is the integral

\begin{equation}
\label{coeffdefine}
a_\xi(s, m, y) = \frac{ 2^{r_2} }{ \sqrt{|D|} } \int_{\F / \OO} F( x + jy, s, m) e( - \tr( \xi \kappa x) ) dx.
\end{equation}

We begin by collecting the terms in (\ref{Fdefine}) with $c = 0$ to write

\begin{multline*}
F(s,m,z) = Ny^s \lambda_m(y) \zeta( 2s, \lambda_{-2m} ) + \sum_{ (c) } \frac{ Ny^s \lambda_m(y) }{ Nc^{2s} \lambda_{2m}(c) } \sum_{ d \text{ mod } (c) } \sum_{ \alpha \in \OO } \\
\prod_{i \le r_1} \frac{1}{ | z_i + \tfrac{d_i}{c_i} + \alpha_i |^{2s_i} } \prod_{i > r_1} \frac{1}{ ( y_i^2 + |x_i + \tfrac{d_i}{c_i} + \alpha_i|^2 )^{2s_i} }.
\end{multline*}

Substituting this into (\ref{coeffdefine}) and unfolding over $\OO$, we express $a_\xi(s, m, y)$ as

\begin{multline*}
a_\xi(s, m, y) = \delta_{\xi 0} Ny^s \lambda_m(y) \zeta( 2s, \lambda_{-2m} ) + \frac{ 2^{r_2} }{ \sqrt{|D|} } \sum_{ (c) } \frac{ Ny^s \lambda_m(y) }{ Nc^{2s} \lambda_{2m}(c) } \sum_{ d \text{ mod } (c) } \\
\prod_{i \le r_1} \int_\R \frac{ e( - \xi_i \kappa_i x_i ) dx_i }{ | z_i + \tfrac{d_i}{c_i} |^{2s_i} } \prod_{i > r_1} \int_\C \frac{ e( - \tr( \xi_i \kappa_i x_i ) ) dx_i }{ ( y_i^2 + |x_i + \tfrac{d_i}{c_i} |^2 )^{2s_i} }.
\end{multline*}

We first consider the case $\xi = 0$.

\begin{eqnarray*}
a_0(s, m, y) & = & Ny^s \lambda_m(y) \zeta( 2s, \lambda_{-2m} ) + \frac{ 2^{r_2} }{ \sqrt{|D|} } \sum_{ (c) } \frac{ Ny^s \lambda_m(y) }{ Nc^{2s} \lambda_{2m}(c) } \sum_{ d \text{ mod } (c) } \\
& \quad & \quad \quad \prod_{i \le r_1} \int_\R \frac{ dx_i }{ | z_i + \tfrac{d_i}{c_i} |^{2s_i} } \prod_{i > r_1} \int_\C \frac{ dx_i }{ ( y_i^2 + |x_i + \tfrac{d_i}{c_i} |^2 )^{2s_i} } \\
& = & Ny^s \lambda_m(y) \zeta( 2s, \lambda_{-2m} ) + \frac{ 2^{r_2} }{ \sqrt{|D|} } \sum_{ (c) } \frac{ Ny^{1-s} \lambda_{-m}(y) }{ Nc^{2s-1} \lambda_{2m}(c) } \\
& \quad & \quad \quad \prod_{i \le r_1} \int_\R \frac{ dx_i }{ ( 1 + x_i^2 )^{s_i} } \prod_{i > r_1} \int_\C \frac{ dx_i }{ ( 1 + |x_i |^2 )^{2s_i} }. \\
& = & Ny^s \lambda_m(y) \zeta( 2s, \lambda_{-2m} ) + \frac{ \pi^{n/2} }{ \sqrt{|D|} } Ny^{1-s} \lambda_{-m}(y) \zeta( 2s-1, \lambda_{-2m} ) \\
& \quad & \quad \quad \prod_{i \le r_1} \frac{ \Gamma( s + \beta(m,i) -1/2) }{ \Gamma( s + \beta(m,i) ) } \prod_{i > r_1} \frac{2}{ 2s + \beta(m,i) -1}.
\end{eqnarray*}

On dividing through by $\zeta(2s, \lambda_{-2m})$, this agrees with the expression given in section \ref{mixedrevise}.  When $\xi \neq 0$, we have

\begin{eqnarray*}
a_\xi(s, m, y) & = & \frac{ 2^{r_2} }{ \sqrt{|D|} } \sum_{ (c) } \frac{ Ny^s \lambda_m(y) }{ Nc^{2s} \lambda_{2m}(c) } \sum_{ d \text{ mod } (c) } \\
& \quad & \quad \quad \prod_{i \le r_1} \int_\R \frac{ e( - \xi_i \kappa_i x_i ) dx_i }{ | z_i + \tfrac{d_i}{c_i} |^{2s_i} } \prod_{i > r_1} \int_\C \frac{ e( - \tr( \xi_i \kappa_i x_i ) ) dx_i }{ ( y_i^2 + |x_i + \tfrac{d_i}{c_i} |^2 )^{2s_i} } \\
& = & \frac{ 2^{r_2} }{ \sqrt{|D|} } \sum_{ (c) } \frac{ Ny^{1-s} \lambda_{-m}(y) }{ Nc^{2s} \lambda_{2m}(c) } \sum_{ d \text{ mod } (c) } e( \tr( \tfrac{ \xi \kappa d }{c} ) )\\
& \quad & \quad \quad \prod_{i \le r_1} \int_\R \frac{ e( - \xi_i \kappa_i y_i x_i ) dx_i }{ ( 1 + x_i^2 )^{s_i} } \prod_{i > r_1} \int_\C \frac{ e( - \tr( \xi_i \kappa_i y_i x_i ) ) dx_i }{ ( 1 + |x_i |^2 )^{2s_i} }. \\
\end{eqnarray*}

The integral at real places is equal to

\begin{equation*}
\frac{ 2 \pi^{s_i} }{ \Gamma(s_i) } ( \xi_i \kappa_i y_i )^{s_i - 1/2} K_{s_i - 1/2}( 2\pi |\xi_i \kappa_i | y_i ),
\end{equation*}

and the integral at complex places may be calculated in the following way as in \cite{Sr}.

\begin{eqnarray*}
\int_\C \frac{ e( - \tr( \xi_i \kappa_i y_i x_i ) ) dx_i }{ ( 1 + |x_i |^2 )^{2s_i} } & = & \int_0^\infty \int_0^{2\pi} \frac{ e( -2 y_i r |\xi_i \kappa_i| \sin ( \theta + \alpha) ) }{ ( r^2 + 1 )^{2s_i} } r d\theta dr \\
& = & \int_0^\infty \frac{r}{ (r^2 + 1)^{2s_i} } \int_0^{2\pi} e( -2 y_i r |\xi_i \kappa_i| \sin \theta ) d\theta dr \\
& = & \int_0^\infty \frac{J_0( 4 \pi r |\xi_i \kappa_i| y_i ) }{ (r^2 + 1)^{2s_i} } dr \\
& = & \frac{ ( 4\pi |\xi_i \kappa_i| y_i )^{2s_i-1} }{ \Gamma( 2s_i) 2^{2s_i -1} }  K_{2s_i-1}( 4\pi |\xi_i \kappa_i| y_i ).
\end{eqnarray*}

(See \cite{GR} for the evaluation of the final integral.)  It can be seen from this that the final form of $a_\xi(s,m,y)$ is the product of a collection of Bessel functions and Gamma factors which agree with the formula for $E(s, m, z)$ of section \ref{mixedrevise}, together with a constant term and a power of $y$ which are given below

\begin{equation*}
\frac{ 2^{r_2} }{ \sqrt{|D|} } Ny^{1-s} \lambda_{-m}(y) \sum_{ (c) } \sum_{ d \text{ mod } (c) } e( \tr( \tfrac{ \xi \kappa d }{c} ) ) \prod_{i \le r_1} 2 \pi^{s_i} ( \xi_i \kappa_i y_i )^{s_i - 1/2} \prod_{i > r_1} \frac{ ( 4\pi |\xi_i \kappa_i| y_i )^{2s_i-1} }{ 2^{2s_i -1} }.
\end{equation*}

The power of $y$ simplifies to $\sqrt{Ny}$, while the constant may be simplified as

\begin{eqnarray*}
& & \frac{ 2^{r_2} }{ \sqrt{|D|} } \sum_{ (c) } \sum_{ d \text{ mod } (c) } e( \tr( \tfrac{ \xi \kappa d }{c} ) ) \prod_{i \le r_1} 2 \pi^{s_i} ( \xi_i \kappa_i )^{s_i - 1/2} \prod_{i > r_1} ( 2 \pi |\xi_i \kappa_i| )^{2s_i-1} \\
& & \quad \quad = \frac{ 2^r \pi^{ns-r_2} }{ \sqrt{|D|} } \sigma_{1-2s, -2m}(\xi \kappa) N(\xi \kappa)^{s-1/2} \lambda_m( \xi \kappa) \prod_{i > r_1} 2^{2s_i-1} \\
& & \quad \quad = \frac{ 2^r \pi^{ns-r_2} }{ \sqrt{|D|} } \sigma_{1-2s, -2m}(\xi \kappa) N(\delta \xi \kappa)^{s-1/2} \lambda_m(\delta \xi \kappa).
\end{eqnarray*}

After dividing through by $\zeta(2s, \lambda_{-2m})$, both of these terms agree with the expression in section \ref{mixedrevise}.

\subsection{$L^2$ Normalisations}
\label{appnorms}

This section contains the calculation of the $L^2$ normalisations of the Fourier coefficients of our forms $F_k$.  The normalisations are based on the equation

\begin{equation}
\label{eisnormal}
\Res_{s=1} \langle E(s,z) F_k, F_k \rangle = \Res_{s=1} \phi(s) \langle F_k, F_k \rangle,
\end{equation}

where $E(s,z) = E(s,0,z)$ and $\phi(s)$ is the scattering coefficient in the constant term.  $\Res_{s=1} \phi(s)$ is given by

\begin{eqnarray*}
\Res_{s=1} \phi(s) & = & \frac{ \pi^{n/2} }{ \sqrt{|D|} } \frac{ \Res_{s=1} \zeta_F(s) }{ 2 \zeta_F(2) } \prod_{i \le r_1} \frac{ \Gamma( 1/2 ) }{ \Gamma(1) } \prod_{i > r_1} 2 \\
& = & \frac{ 2^{r_2-1} \pi^{(n + r_1)/2} \Res_{s=1} \zeta_F(s) }{ \sqrt{|D|} \zeta_F(2) }.
\end{eqnarray*}

We then calculate $\langle E(s,z) F_k, F_k \rangle$ by unfolding and compare the two sides of (\ref{eisnormal}).

\begin{eqnarray*}
\langle E(s,z) F_k, F_k \rangle & = & \int_{ \Gamma_\infty \backslash \HH_F } Ny^s |F_k|^2 dv \\
& = & |a_f(1)|^2 \frac{ \sqrt{|D|} }{ 2^{r_2} \omega_+ } \int_{ \R_+^r / \OO_+^\times } Ny^{s-1} \sum_{ \eta \in \OO^+ } N\eta^{-1}  |\lambda_\pi(\eta)|^2 | {\bf K}_k(\eta \kappa y) |^2 dy^\times \\
& = & |a_f(1)|^2 \frac{ \sqrt{|D|} }{ 2^{r_2} } \sum_{ (\eta) } N\eta^{-1}  |\lambda_\pi(\eta)|^2  \int_{ \R_+^r } Ny^{s-1} |{\bf K}_k(\eta \kappa y) |^2 dy^\times.
\end{eqnarray*}

Note that the factor of $\omega_+$ vanished because $\OO^+ / \OO_+^\times$ counts each ideal with multiplicity $\omega_+$.

\begin{eqnarray*}
\langle E(s,z) F_k, F_k \rangle & = & |a_f(1)|^2 \frac{ \sqrt{|D|} N\kappa^{1-s} }{ 2^{r_2} } \sum_{ (\eta) } \frac{ |\lambda_\pi(\eta)|^2 }{ N\eta^s } \int_{ \R_+^r } Ny^{s-1} |{\bf K}_k(y) |^2 dy^\times \\
& = & |a_f(1)|^2 \frac{ |D|^{s-1/2} }{ 2^{r_2} } L( s, \sym^2 \pi ) \frac{ \zeta_F(s) }{ \zeta_F(2s) } \int_{ \R_+^r } Ny^{s-1} |{\bf K}_k(y) |^2 dy^\times.
\end{eqnarray*}

We only need the value of the integral at $s=1$, and to calculate it we expand it as a product over the infinite places.  The factor at a real place is

\begin{equation*}
\int_0^\infty y^{k_i} \exp( -4\pi k_i y ) dy^\times = (4\pi)^{-k_i} \Gamma( k_i ).
\end{equation*}

At a complex place, it is

\begin{eqnarray*}
& & \int_0^\infty y^{k_i + 2} \sum_{j=0}^{k_i} \binom{k_i}{j} K_{k_i/2 - j}^2 ( 4\pi y ) dy^\times \\
& & \quad \quad = (4\pi)^{-k_i - 2} \sum_{j=0}^{k_i} \binom{k_i}{j} \int_0^\infty y^{k_i + 2} K_{k_i/2 - j}^2 ( y ) dy^\times \\
& & \quad \quad =  (4\pi)^{-k_i - 2} \frac{ 2^{k_i-1} \Gamma( 1 + k_i/2 )^2 }{ \Gamma( 2 + k_i ) } \sum_{j=0}^{k_i} \binom{k_i}{j} \Gamma( 1 + j) \Gamma( 1 + k_i -j ) \\
& & \quad \quad =  (4\pi)^{-k_i - 2} \frac{ 2^{k_i-1} \Gamma( 1 + k_i/2 )^2 }{ \Gamma( 2 + k_i ) } (k_i+1)! \\
& & \quad \quad =  2^{-5} \pi^{-2} (2\pi)^{-k_i} \Gamma( 1 + k_i/2 )^2.
\end{eqnarray*}

Combining these, we have the following expression for $\Res_{s=1} \langle E(s,z) F_k, F_k \rangle $:

\begin{multline*}
\Res_{s=1} \langle E(s,z) F_k, F_k \rangle = |a_f(1)|^2 \frac{ \sqrt{|D|} }{ 2^{6r_2} \pi^{2r_2} } L( 1, \sym^2 \pi ) \frac{ \Res_{s=1} \zeta_F(s) }{ \zeta_F(2) } \\
\prod_{i \le r_1} (4\pi)^{-k_i} \Gamma( k_i ) \prod_{i > r_1} (2\pi)^{-k_i} \Gamma( 1 + k_i/2 )^2.
\end{multline*}

Dividing by $\Res_{s=1} \phi(s)$ we obtain the required relation between $|a_f(1)|^2$ and $\langle F_k, F_k \rangle$,

\begin{equation*}
\langle F_k, F_k \rangle = |a_f(1)|^2 \frac{ |D| L( 1, \sym^2 \pi ) }{ 2^{7r_2-1} \pi^{r_1 + 3r_2} } \prod_{i \le r_1} (4\pi)^{-k_i} \Gamma( k_i ) \prod_{i > r_1} (2\pi)^{-k_i} \Gamma( 1 + k_i/2 )^2.
\end{equation*}

\subsection{Volume Computations}
\label{appvol}

In this section we compute the volume element in the cusp of $Y$, and use this with our computation of the residue of $E(s,z)$ to calculate the volume of $Y$.  As in Efrat \cite{Ef}, we shall introduce simplified co-ordinates in the cusp, defined using the matrix $A$ from section \ref{weightprelims2}.  We define the co-ordinates $Y_0, \ldots, Y_{r-1}$ by

\begin{equation*}
\left( \begin{array}{c} \ln Y_0 \\ Y_1 \\ \vdots \\ Y_{r-1} \end{array} \right) = 
\left( \begin{array}{cccc} 1 & 1 & \ldots & 2 \\
 e_1^1 & e_2^1 & \ldots & e_n^1 \\
 \vdots & & & \\
 e_1^{n-1} & e_2^{n-1} & \ldots & e_n^{n-1} 
 \end{array} \right)
\left( \begin{array}{c} \ln y_1 \\ \ln y_2 \\ \vdots \\ \ln y_r \end{array} \right),
\end{equation*}

so that

\begin{equation*}
\left( \begin{array}{c} \ln y_1 \\ \ln y_2 \\ \vdots \\ \ln y_r \end{array} \right) =
\left( \begin{array}{cccc} 1/n & \log | \epsilon_1^1 | & \ldots & \log | \epsilon_{r-1}^1 | \\
 \vdots & & & \\
 1/n & \log | \epsilon_1^r | & \ldots & \log | \epsilon_{r-1}^r | 
 \end{array} \right)
\left( \begin{array}{c} \ln Y_0 \\ Y_1 \\ \vdots \\ Y_{r-1} \end{array} \right).
\end{equation*}

We shall compute the volume form of $\HH_F$ with respect to the new system of co-ordinates $Y_0, \ldots, Y_{r-1}, x_1, \ldots, x_r$.  As we are not changing the $x$-coordinates at all we may omit them from our calculations, and only compute the form $\bigwedge_i dy_i/y_i$ with respect to $\{ Y_i \}$.  The Jacobian of the change of co-ordinates is

\begin{equation*}
\left( \begin{array}{c} \partial y_1 \\ \partial y_2 \\ \vdots \\ \partial y_r \end{array} \right) =
\left( \begin{array}{cccc} \tfrac{ y_1}{ nY_0} & y_1 \log | \epsilon_1^1 | & \ldots & y_1 \log | \epsilon_{r-1}^1 | \\
 \vdots & & & \\
 \tfrac{ y_r}{ nY_0} & y_r \log | \epsilon_1^r | & \ldots & y_r \log | \epsilon_{r-1}^r | 
 \end{array} \right)
\left( \begin{array}{c} \partial Y_0 \\ \partial Y_1 \\ \vdots \\ \partial Y_{r-1} \end{array} \right),
\end{equation*}

and we need to calculate its determinant which is

\begin{equation*}
\frac{1}{nY_0} \prod_{i=1}^r y_i \det 
\left( \begin{array}{cccc} 1 & \log | \epsilon_1^1 | & \ldots & \log | \epsilon_{r-1}^1 | \\
 \vdots & & & \\
 1 & \log | \epsilon_1^r | & \ldots & \log | \epsilon_{r-1}^r | 
 \end{array} \right).
\end{equation*}

We shall calculate this determinant by a minor expansion along the first column.  The absolute value of the determinant of the $(1,i)$th minor is the regulator $R^+$ of $\OO_+^\times$ times $1/2$ for every complex place we are expanding over.  The index of $\OO_+^\times$ in $\OO^\times$ is $2^{r_1 - 1 + \delta_{r_1 0} }$ so $R^+ = 2^{r_1 - 1 + \delta_{r_1 0} } R$, and the alternating sum of the minors is $(r_1 + 2r_2) 2^{r_1 -r_2 - 1 + \delta_{r_1 0} } R = n 2^{r_1 -r_2 - 1 + \delta_{r_1 0} } R$.  The expression for $dv$ in terms of our new co-ordinate system is therefore

\begin{equation*}
dv = \frac{ 2^{r_1 -r_2 - 1 + \delta_{r_1 0} } R }{ Y_0^2 } \bigwedge_{i=0}^{r-1} dY_i \wedge dx.
\end{equation*}

We now verify that the main term appearing in $I_\phi(T)$ during the contour shift in lemma \ref{Iphimain} and equation (\ref{Iphimain2}) is in fact $\langle E(z|g), 1 \rangle / \text{Vol}(Y) \langle \phi F_k, F_k \rangle$.  The residue is equal to

\begin{equation*}
V_c^{-1} G(-1,0) \Res_{s=1} \phi(s) \langle \phi F_k, F_k \rangle,
\end{equation*}

and it may easily be seen that $\langle E(z|g), 1 \rangle = 2^{-r_2} \omega_+^{-1} \sqrt{|D|} G(-1,0)$.  Therefore to show that the two expressions are equal we only need to show that $\text{Vol}(Y) = V_c 2^{-r_2} \omega_+^{-1} \sqrt{|D|} ( \Res_{s=1} \phi(s) )^{-1}$.  This follows easily from the standard method of computing the volume of fundamental domains using Eisenstein series, and substituting the value of $\Res_{s=1} \phi(s)$ gives

\begin{equation*}
\text{Vol}(Y) = \frac{ 2^{-4r_2 + 1} |D|^{3/2} \zeta_F(2) }{ \pi^n }.
\end{equation*}


\begin{thebibliography}{99}

\bibitem{BW}
A Borel, N. Wallach: \textit{Continuous Cohomology, Discrete Subgroups, and Representations of Reductive Groups}, Mathematical Surveys and Monographs, no. 67, American Mathematical Society (2000).

\bibitem{BH}
T. Berger, G. Harcos: \textit{$l$-adic representations associated to modular forms over imaginary quadratic fields}, IMRN (2007), article ID rnm113.

\bibitem{Bl}
D. Blasius: \textit{Hilbert modular forms and the Ramanujan conjecture}, preprint, available as arxiv.org:math/0511007

\bibitem{Ef}
I. Efrat: \textit{The Selberg Trace Formula for $PSL(2,\R)^n$}, Memoirs of the American Mathematical Society, no. 359, American Mathematical Society (1987).

\bibitem{EMS}
P. D. T. A. Elliott, C. J. Moreno, F. Shahidi: \textit{On the absolute value of Ramanujan's $\tau$ function}, Math. Ann. 266 (1984), 507-511.

\bibitem{FGT}
T. Finis, F. Grunewald, P. Tirao: \textit{The cohomology of lattices in $SL(2,\C)$}, Exp. Math., to appear.  Available as arxiv.org:math/0808.1204.

\bibitem{Ga}
P. Garrett: \textit{Decomposition of Eisenstein series: Rankin triple products}, Ann. of Math. 125 (1987), 209-235.

\bibitem{GJ}
S. Gelbart, H. Jacquet: \textit{A relation between automorphic representations of $GL(2)$ and $GL(3)$}, Ann Sci. Ecole Norm. Sup. 11 (1978), 471-552.

\bibitem{GHL}
D. Goldfeld, J. Hoffstein, P. Lockhart: \textit{Appendix: An effective zero-free region}, Ann. of Math. 140 (1994), 177-181.

\bibitem{GR}
I.S. Gradshteyn, I.M. Ryzhik: \textit{Table of Integrals, Series and Products}, Academic Press (2000).

\bibitem{HST}
M. Harris, D. Soudry, R. Taylor: \textit{$l$-adic representations associated to modular forms over imaginary quadratic fields}, Inventiones Math. 112 (1993), 377-411.

\bibitem{HL}
J. Hoffstein, P. Lockhart: \textit{Coefficients of Maass forms and the Siegel zero}, Ann. of Math. 140 (1994), 161-181.

\bibitem{Ho2}
R. Holowinsky: \textit{A sieve method for shifted convolution sums}, Duke Math. J., to appear.  Also available as 	arXiv.org:math/0809.1669v1.

\bibitem{Ho}
R. Holowinsky: \textit{Sieving for mass equidistribution}, preprint, available as arxiv.org:math/0809.1640

\bibitem{HS}
R. Holowinsky, K. Soundararajan: \textit{Mass equidistribution for Hecke eigenforms}, preprint, available as arxiv.org:math/0809.1636v1.

\bibitem{I}
A. Ichino: \textit{Trilinear forms and the central values of triple product L-functions}, Duke Math. J. 145 vol. 2 (2008), 281-307.

\bibitem{IK}
H. Iwaniec, E. Kowalski: \textit{Analytic Number Theory}, American Mathematical Society Colloquium Publications, 53.  American Mathematical Society, Providence, RI, 2004.  xii + 615 pp.

\bibitem{JL}
H Jacquet, R. P. Langlands: \textit{Automorphic Forms on $GL_2$ vol. 2}, Lecture Notes in Mathematics, Vol. 278, Berlin, New York: Springer-Verlag.

\bibitem{Li1}
E. Lindenstrauss: \textit{Invariant measures and arithmetic quantum unique ergodicity}, Ann. of Math. 163 (2006), 705-741.

\bibitem{Li2}
E. Lindenstrauss: \textit{On quantum unique ergodicity for $\Gamma \backslash \HH \times \HH$}, IMRN (2001), no. 17, 913-933.

\bibitem{LS}
W. Luo, P. Sarnak: \textit{Mass equidistribution for Hecke eigenforms}, Comm. Pure Appl. Math. 56 (2003), no. 7, 874-891.

\bibitem{Ma1}
S. Marshall: \textit{Bounds for multiplicities of automorphic forms of cohomological type on $GL_2$}, preprint, available as arxiv.org:math/1002.3569v4.

\bibitem{Ma2}
S. Marshall: \textit{Quantum chaos and triple product identities on $SL(2,\C)$}, preprint.

\bibitem{MV}
P. Michel, A. Venkatesh: \textit{The subconvexity problem for $GL_2$}, Publications Math IHES, to appear.  Available as arxiv.org:math/0903.3591v1.

\bibitem{MVa}
H. Montgomery, R. Vaughan: \textit{Multiplicative Number Theory. I. Classical Theory.}  Cambridge University Press, 2007.

\bibitem{Na}
M. Nair: \textit{Multiplicative functions of polynomial values in short intervals}, Acta. Arith. 62 (1992), no. 3, 257-269.

\bibitem{NT}
M. Nair, G. Tenenbaum: \textit{Short sums of certain arithmetic functions}, Acta. Math. 180 (1998), no. 1, 119-144.

\bibitem{NV}
S. Nonnenmacher, A. Voros: \textit{Chaotic eigenfunctions in phase space}, J. Statist. Phys. 92 (1998), no. 3-4, 431-518.

\bibitem{Ru}
Z. Rudnick: \textit{On the asymptotic distribution of zeros of modular forms}, IMRN 34 (2005), 2059-2074.

\bibitem{RS}
Z. Rudnick, P. Sarnak: \textit{The behaviour of eigenstates of arithmetic hyperbolic manifolds}, Comm. Math. Phys. 161 (1994), 195-213.

\bibitem{Sr}
P. Sarnak: \textit{The arithmetic and geometry of some hyperbolic three manifolds}, Acta Mathematica 151 (1983), 253-295.

\bibitem{Sr1}
P. Sarnak: \textit{Estimates for Rankin-Selberg L-functions and quantum unique ergodicity}, J. Funct. Anal. 184 (2001), 419-453.

\bibitem{Sh}
H. Shimizu: \textit{On discontinuous groups acting on the product of the upper half planes}, Ann. of Math. 77 (1963), 33-71.

\bibitem{Sn}
A. I. \v{S}nirel'man, \textit{Ergodic properties of eigenfunctions}, Uspehi Mat. Nauk 29 (1974), no. 6 (180), 181-182.  MR 53 \#6648.

\bibitem{So1}
K. Soundararajan: \textit{Quantum unique ergodicity for $SL(2,\Z) \backslash \HH$}, preprint, available as arxiv.org:math/0901.4060v1.

\bibitem{So2}
K. Soundararajan: \textit{Weak subconvexity for central values of L-functions}, preprint, available as arxiv.org:math/0809.1635v1.

\bibitem{SV}
L. Silberman, A. Venkatesh: \textit{Quantum unique ergodicity on locally symmetric spaces I}, Geom. Funct. Analysis 17 (2007), no. 3, 960-998.

\bibitem{SZ}
B. Shiffman, S. Zelditch: \textit{Distribution of zeros of random and quantum chaotic sections of positive line bundles}, Comm. Math. Phys. 200 (1999), no. 3, 661-683.

\bibitem{W}
T. Watson: \textit{Rankin triple products and quantum chaos}, Ph. D. Thesis, Princeton University (eprint available at http://www.math.princeton.edu/\~\,tcwatson/ watson\_thesis\_final.pdf) (2001)

\bibitem{Ze1}
S. Zelditch: \textit{Pseudodifferential analysis on hyperbolic surfaces}, J. Funct. Anal. 68 (1986), no. 1, 72-105.

\bibitem{Ze2}
S. Zelditch: \textit{The averaging method and ergodic theory for pseudo-differential operators}, J. Funct. Anal. 82 (1989), no. 1, 38-68.

\end{thebibliography}
\end{document}